\theoremstyle{remark} 
\theoremstyle{plain} 
\newtheorem{theorem}{Theorem}[section]
\newtheorem{proposition}[theorem]{Proposition}
\newtheorem{question}{Question}
\newtheorem*{theorem*}{Theorem}
\newtheorem{corollary}[theorem]{Corollary}
\newtheorem{lemma}[theorem]{Lemma}
\newtheorem*{theorem1}{Theorem 1}
\newtheorem*{theorem2}{Theorem 2}
\newtheorem*{theorem3}{Theorem 3}
\newtheorem*{theorem4}{Corrolary 3.1}
\newtheorem*{theorem1b}{Theorem 1.0.1}
\newcommand{\Z}{\mathbb{Z}} 
\newcommand{\C}{\mathbb{C}}
\newcommand{\Q}{\mathbb{Q}} 
\newcommand{\N}{\mathbb{N}} 
\newcommand{\Ss}{\mathbf{S}}
\newcommand{\Ap}{\Delta}
\newcommand{\Apk}{\Delta_{\kappa}}
\newcommand{\Apf}{\Delta_p}
\newcommand{\Apfi}{\Delta_{p_i}}
\newcommand{\Fp}{\mathbb{F}_p}
\newcommand{\Fq}{\mathbb{F}_{p^d}}
\newcommand{\Ga}{\Gamma_K}
\newcommand{\Gab}{\Gamma_K^{\text{ab}}}
\newcommand{\Ral}{\rho_{\alpha}}
\newcommand{\Gal}{G_{\alpha}}
\newcommand{\mfb}{\mathfrak{b}}
\title{Some non-abelian covers of knots with non-trivial Alexander polynomial}
\author{Timothy Michael Morris\\ Mathematics Department, Temple University}
\begin{document}

\maketitle

\begin{abstract}
Let $K$ be a tame knot embedded in $\Ss^3$. We address the problem of finding the minimal degree non-cyclic cover $p:X \rightarrow \Ss^3 \smallsetminus K$. When $K$ has non-trivial Alexander polynomial we construct finite non-abelian representations $\rho:\pi_1\left(\Ss^3 \smallsetminus K\right) \rightarrow G$, and provide bounds for the order of $G$ in terms of the crossing number of $K$ which is an improvement on a result of Broaddus in this case. Using classical covering space theory along with the theory of Alexander stratifications we establish an upper and lower bound for the first betti number of the cover $X_\rho$ associated to the $\text{ker}(\rho)$ of $\Ss^3 \smallsetminus K$, consequently showing that it can be arbitrarily large. We also demonstrate that $X_\rho$ contains non-peripheral homology for certain computable examples, which mirrors a famous result of Cooper, Long, and Reid when $K$ is a knot with non-trivial Alexander polynomial. 
\end{abstract}

\section{Introduction}
\label{sec:Intro}

	In $1987$ Hempel \cite{Hem1} showed that the fundamental groups of Haken 3-manifolds are residually finite, i.e., $\bigcap H = \{1\}$ where $H$ ranges over the finite index normal subgroups of the fundamental group of the $3$-manifold. It follows that all topological $3$-manifolds with single a torus boundary component are residually finite. A consequence of residual finiteness is that the fundamental group admits a rich family of finite quotients, and therefore the knot manifold $M$ has an abundance of finite sheeted covers with varying topological properties.
	
	For the remainder of this paper $M_K$ always denotes the manifold $\Ss^3 \smallsetminus K$, and $\Ga= \pi_1\left(M_K \right)$. There is a very well known construction which describes an infinite family of finite covers of a knot complement, namely those which arise from the kernels of finite cyclic quotients, known as cyclic covers. Such quotients come from the following construction. Denote $\Gab = \sfrac{\Ga}{\left[\Ga,\Ga\right]}$, since $\Gab \cong \Z$,  there exists a homomorphism $\Ga \rightarrow \Z/n\Z$. The kernel of this homomorphism corresponds to a regular cover, $X_n$, typically called the $n$-fold cyclic cover of $M$.\footnote{A similar, but different notion, is the cyclic covers of $\Ss^3$ branched over the knot $K$. We do not discuss these covers.} 

when $K$ is a non-trivial knot, residual finiteness ensures the existence of non-abelian quotients of $\Ga$. Thus, there exists covers of $M_K$ which do not arise from the cyclic quotients of $H_1\left(M_K\right)$ described above. In this paper we address the following question.

\begin{question}\label{que:Q1}
What is the minimal degree non-cyclic cover of $M_K$?
\end{question} 

	The first systematic treatment of this problem was due to Broaddus \cite{Brd1}. In his thesis, he constructs explicit finite non-cyclic covers of the knot complements and provides an upper bound on the degree. Kuperberg \cite{Kup1} later decribed the growth rate of the degree of non-abelian covers as being $\text{NP}$ modulo the Generalized Reimann Hypothesis.  We improve on these results when $K$ has non-trivial Alexander polynomial.
	
	 Other than Broaddus's and Kuperberg's work there is little in the literature that directly addresses the problem of minimal degree non-cyclic covers of knot complements. Moreover, Broaddus and Kuperberg both relate the degree of the non-cyclic covers to combinatorial invariants of the knot. Let $D$ denote any diagram of $K$, recall that the crossing number of a knot is defined to be 
\begin{equation*}	
c_K=\text{min}|\{\text{Crossings \ of \ } D \}|,
\end{equation*}
where the minimum is taken over all diagrams, $D$, of the knot. Broaddus proved the following: 	 
\begin{theorem*}{(Broaddus \cite{Brd1})}
For all non-trivial knots $K$, there exists an explicit function $\mfb:\N_{3\geq} \rightarrow \N$, and there exists a finite non-cyclic cover $Z$ of $M_K$, with $[M_K:Z] \leq \mfb(c_K)$. 
\end{theorem*}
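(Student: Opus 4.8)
The plan is to convert a minimal diagram of $K$ into an explicit presentation and then to manufacture, by a bounded search, a finite non-abelian quotient of $\Ga$. First I would fix a diagram $D$ realizing the crossing number, so $D$ has $c=c_K$ crossings, and take the Wirtinger presentation it determines: one generator $x_i$ per arc (each a meridian of $K$), one relator per crossing, each of the conjugation form $x_k x_i x_k^{-1} x_j^{-1}$, with one relator redundant, so $\Ga=\langle x_1,\dots,x_c\mid r_1,\dots,r_{c-1}\rangle$ and $H_1(M_K)=\Z$ generated by the common image of the meridians. Since $K$ is non-trivial, $\Ga$ is non-abelian (a knot complement with infinite cyclic fundamental group is a solid torus, forcing $K$ to be the unknot), so some Wirtinger commutator $[x_i,x_j]$ is non-trivial in $\Ga$. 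It therefore suffices to produce a finite quotient $q\colon\Ga\to G$ with $q([x_i,x_j])\neq 1$ and $|G|\leq\mfb(c_K)$ for an explicit $\mfb$: the regular cover associated to $\ker q$ has degree $|G|$, and it cannot be one of the cyclic covers $X_n$, for otherwise $\ker q=\ker(\Ga\to\Z/n)$ and $G$ would be a quotient of $\Z$, hence abelian.

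The source of both the quotient and the bound is a bounded-complexity algebraic model of the non-abelian representations of $\Ga$. From the Wirtinger presentation one writes the $SL_2$-character scheme of $\Ga$ as the vanishing locus of a polynomial system in the trace coordinates of the $x_i$ and their short products: the number of coordinates is $O(c)$ and the coefficients are integers whose size is bounded explicitly in $c$. The abelian (diagonalizable) representations form a distinguished component, and the key input is that the non-abelian part of this locus is non-empty for every non-trivial knot; here I would invoke the existence of an irreducible $SL_2(\C)$ representation of a non-trivial knot group (via Riley's parabolic representations together with Kronheimer--Mrowka, or, when $\Delta_K\neq 1$, directly by deforming the abelian representation off a root of the Alexander polynomial à la Burde--de Rham). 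Choosing a point of this non-abelian locus, its coordinates are algebraic numbers satisfying polynomials of degree and height bounded explicitly in $c$.

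Next I would reduce modulo a prime. Let $p$ be the least prime not dividing the bounded leading coefficients and discriminants appearing in the defining equations; then $p$ is itself bounded explicitly in $c$, and the chosen non-abelian point reduces to an $\overline{\mathbb{F}_p}$-point, necessarily defined over $\mathbb{F}_{p^d}$ with $d=O(c)$. This yields a representation $\Ga\to SL_2(\mathbb{F}_{p^d})$ whose image is non-abelian — after discarding the finitely many bad points on which the trace data collapses to the diagonal case — and, composing with the projection to $PSL_2$ if needed and taking $G$ to be the image, a finite non-abelian quotient of $\Ga$ with $|G|\leq |PSL_2(\mathbb{F}_{p^d})|<p^{3d}\leq\mfb(c_K)$, where $\mfb$ is the (large but explicit) function obtained by tracking $p$ and $d$ through the argument. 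The associated cover is the desired $Z$.

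The hard part will be the quantitative control in the middle step: one needs not merely that the non-abelian $SL_2$-representation locus is non-empty, but that it is cut out by, and its points are defined over extensions of, complexity bounded in terms of $c_K$ — an effective form of the existence of irreducible representations, with the field of definition bounded rather than merely finite. For hyperbolic $K$ this is an effective Mostow-rigidity / gluing-equations estimate; for torus knots and satellites one instead builds the quotient explicitly along the Seifert pieces and the JSJ tori, and the bookkeeping of these cases — matching the meridian images and checking that non-abelianness survives the assembly — is what requires care. A secondary, more routine, obstacle is the effective prime search (a Chebotarev-type bound) ensuring that the reduction remains non-abelian for a prime of explicitly bounded size.
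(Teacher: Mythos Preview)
This theorem is not proved in the paper: it is quoted in the introduction as a prior result of Broaddus \cite{Brd1}, with no argument given. There is therefore no ``paper's own proof'' to compare your proposal against. The paper's contribution is a different statement, Theorem~1, which applies only to knots with non-trivial Alexander polynomial and is proved by an entirely different and much more elementary mechanism: de~Rham's construction of affine representations $\Ga\to\mathrm{Aff}(\Fq)$ from roots of $\Ap(t)\pmod p$, yielding metabelian quotients $\Gal\cong\langle\alpha\rangle\ltimes\Fq$ whose order is bounded by controlling the coefficients of $\Ap(t)$ via the Wirtinger matrix (Lemmas~\ref{lem:2.3} and~\ref{lem:3.2}).

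As for your sketch on its own terms: the strategy --- build the $SL_2$-character scheme from a Wirtinger presentation, invoke existence of an irreducible representation for non-trivial $K$, then reduce mod a bounded prime --- is closer in spirit to Kuperberg's approach than to Broaddus's, and you have correctly identified the genuine difficulty: effective control on the field of definition of a non-abelian point and on a prime of good reduction. Your appeal to Kronheimer--Mrowka (for the hyperbolic/general case) is non-elementary and not obviously effective in the form you need; Broaddus's actual argument proceeds instead through normal surface theory and Haken hierarchies to build the quotient combinatorially, which is why his bound $\mfb$ is so large. None of this, however, bears on the present paper, which simply cites the result and moves on.
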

Similarly, Kuperberg proves the following result about the existence and order of finite non-abelian quotients of the group $\Ga$. In the following $\mathrm{pol}$ and $\mathrm{exp}$ represent the existence of a polynomial and exponential functions in the variable $c_K$.
\begin{theorem*}{(Kuperberg, \cite{Kup1})}
If $K$ is a non-trivial knot, then there exists a finite quotient $G$ of $\Ga$ with 
\begin{equation*}
|G|=\mathrm{exp}(\mathrm{exp}(\mathrm{pol}(c_K))).
\end{equation*}
Assuming the Generalized Reimann Hypothesis, one has
\begin{equation*}
|G|=\mathrm{exp}(\mathrm{pol}(c_K)).
\end{equation*}
\end{theorem*}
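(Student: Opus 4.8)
The plan is to construct a genuinely \emph{non-cyclic} finite quotient $G$ of $\Ga$ (for $\Ga$ this is the same as a non-abelian quotient, since $\Gab\cong\Z$ forces every abelian quotient to be cyclic) and then to bound $|G|$ against $c_K$; as it stands the theorem is vacuous (take $G=1$), so the content is an \emph{effective} non-abelian certificate of knottedness, which exists precisely because $K$ is non-trivial. The combinatorial input is the Wirtinger presentation read off a minimal diagram, namely $\Ga=\langle x_1,\dots,x_n\mid r_1,\dots,r_n\rangle$ with $n=c_K$ and each relator $r_i$ a word of length $\le 4$ in the meridional generators; every complexity estimate below is measured against this presentation of size $O(c_K)$.

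The first half of the argument is geometric and only asserts \emph{existence} of a non-abelian representation. Since $K$ is non-trivial, $M_K$ is not a solid torus; being Haken it satisfies geometrization, so it is hyperbolic, a Seifert fibered space (necessarily a torus-knot complement $\langle a,b\mid a^p=b^q\rangle$ with $p,q\ge 2$ coprime), or splits along its JSJ tori into hyperbolic and Seifert pieces. A hyperbolic piece carries the holonomy $\rho\colon\Ga\to\mathrm{PSL}_2(\C)$ with non-abelian image, and a torus-knot piece surjects onto $\Z/p * \Z/q$ and hence onto arbitrarily complicated finite groups; in particular $\Ga$ has a representation into $\mathrm{PSL}_2(\C)$ that is not abelian. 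Lifting it to $\mathrm{SL}_2(\C)$ (the obstruction lies in $H^2(\Ga;\Z/2)$, which vanishes for a knot group) and using that the $\mathrm{SL}_2$-representation scheme of $\langle x_i\mid r_j\rangle$ is defined over $\Z$ and has a $\C$-point off the closed locus of abelian representations, we obtain such a representation with all matrix entries in some number field $F$. Reducing modulo a prime $\mathfrak p$ of $F$ that avoids the finitely many primes dividing a denominator of one of those entries or a fixed nonzero element of $F$ witnessing non-abelianness yields $\rho_{\mathfrak p}\colon\Ga\to\mathrm{SL}_2(\mathbb F_q)$ whose image $G$ in $\mathrm{PSL}_2(\mathbb F_q)$ is non-abelian, so $|G|<q^3$ and the theorem reduces to bounding the prime power $q$.

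The second half is the effective number theory, and this is the main obstacle --- and the only place GRH enters. The representation scheme is cut out by $O(c_K)$ polynomials in $O(c_K)$ variables of degree $\le c_K$ (each relator is a product of $\le 4$ matrices), and the non-abelian condition is the non-vanishing of a fixed commutator polynomial; by the effective, respectively arithmetic, Nullstellensatz one expects a witnessing point over a number field $F$ whose degree, and the logarithmic heights of the algebraic numbers involved, are bounded by $\mathrm{exp}(\mathrm{pol}(c_K))$, so that $\log|\Delta_F|=\mathrm{exp}(\mathrm{pol}(c_K))$ and $\Delta_F$ itself is double-exponential in $c_K$. Unconditionally, the smallest prime $\mathfrak p$ of $F$ at which $\rho$ reduces well and stays non-abelian can only be bounded by a fixed power of $|\Delta_F|$, giving $q$, hence $|G|$, of size $\mathrm{exp}(\mathrm{exp}(\mathrm{pol}(c_K)))$. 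Assuming GRH one instead applies the effective Chebotarev density theorem (Lagarias--Odlyzko, in Serre's formulation) to a finite extension of $F$ detecting both good reduction and the open non-abelian condition, producing such a $\mathfrak p$ of norm $q=O\big((\log|\Delta_F|)^2\big)=\mathrm{exp}(\mathrm{pol}(c_K))$, whence $|G|=\mathrm{exp}(\mathrm{pol}(c_K))$. Besides this arithmetic step, the remaining real work is to \emph{effectivize} geometrization --- bounding the number and topological complexity of the JSJ pieces, and of the holonomy and quotient data attached to them, polynomially in $c_K$ --- for which normal-surface theory supplies the needed a priori bounds that must then be threaded through the whole construction.

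Finally, I would note that in the regime relevant to the present paper, $\Delta_K\ne 1$, none of this machinery is required and much sharper, \emph{unconditional} bounds hold: the Alexander module given by the first homology of the infinite cyclic cover of $M_K$ is a nonzero finitely generated torsion $\Z[t^{\pm 1}]$-module, and choosing a small prime $\ell$ (of size polynomial in $c_K$, avoiding the bounded set of primes dividing the leading coefficient of $\Delta_K$) produces an explicit non-abelian metabelian quotient of the shape $(\Z/\ell)^{k}\rtimes\Z/n$ with $k\le\deg\Delta_K\le c_K$ and $n$ an order bounded by $\ell^{k}$. This yields an explicit single-exponential bound with no hypotheses --- recovering Kuperberg's conditional estimate unconditionally in this case --- which is the improvement over Broaddus announced in the introduction.
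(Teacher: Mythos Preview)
The paper does not prove this statement. Kuperberg's theorem is quoted in the introduction as a result from the literature (attributed to \cite{Kup1}) purely to situate the paper's contribution; there is no proof or proof sketch of it anywhere in the text. So there is nothing to compare your proposal against on the paper's side.

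That said, your sketch is broadly in the spirit of Kuperberg's actual argument: produce a non-abelian $\mathrm{SL}_2$-representation over $\overline{\Q}$, specialize modulo a prime, and control the size of the residue field unconditionally via effective Nullstellensatz bounds and conditionally via GRH/effective Chebotarev. A couple of points are soft. First, Kuperberg does not ``effectivize geometrization'' in the sense you describe; the existence of a non-abelian $\mathrm{SU}(2)$-representation for a non-trivial knot is supplied by Kronheimer--Mrowka, which short-circuits the JSJ bookkeeping you propose. Second, your lifting step (vanishing of $H^2(\Ga;\Z/2)$) and the claim that the representation variety has the required arithmetic complexity bounds are asserted rather than argued; these are exactly the places where the real work lives. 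Your final paragraph, on the other hand, is essentially a summary of what \emph{this} paper proves (Theorems~1 and~2): the explicit metabelian quotient $\langle\alpha\rangle\ltimes\Fq$ built from a root of $\Ap(t)\pmod p$, giving an unconditional $\mathrm{exp}(\mathrm{pol}(c_K))$ bound when $\Ap(t)$ is non-trivial.
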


	We improve this result in terms of the degree of the cover in both results above, and drop the reliance on the Generalized Reimann Hypothesis, for a knot $K$ with non-trivial Alexander polynomial. Explicitly, we establish an upper bound similar to the result of Broaddus, however our construction yields a computationally simpler bound, in the sense that the lower bound established by Broaddus exceeds computational capability of current computer software on a standard desktop computer even for $c_K=3$. Furthermore the bound we establish is of the class $\mathrm{exp}(\mathrm{pol}(c_K))$, however both $\mathrm{exp}$ and $\mathrm{pol}$ are explicitly given. In section $2$ we prove the following Theorem.
	
\begin{theorem1}
If $K$ is a knot with crossing number $c_K$ and non-trivial Alexander polynomial, then there exists a regular non-abelian cover $X_{\Ral}$ of $M_K$ with 
\begin{equation*}
\left[M_K : X_{\Ral}\right] \leq 2^{4c_K^2}, 
\end{equation*}
and there exists an irregular non-cyclic cover $Y_{\Ral}$ with
\begin{equation*}
\left[M_K : Y_{\Ral}\right] \leq 2^{2c_K^2}. 
\end{equation*}
\end{theorem1}

	Notice that Theorem 1 addresses the minimality of \emph{regular} non-cyclic covers, providing explicit constructions and bounds. This has not been previously studied in the literature. We we strengthen the conclusion of Theorem 1 for certain important families of knots. 
	
\begin{theorem1b}
\
\begin{enumerate}
\item If $K$ is a twist knot with $2n$ half twists and non-trivial Alexander polynomial, then
\begin{equation*}
\left[M_K : Y_{\Ral}\right] \leq 16n^2. 
\end{equation*}
\item If $K$ is a fibered knot, with non-trivial Alexander polynomial we have
\begin{equation*}
\left[M_K : Y_{\Ral}\right] \leq 2^{c_K}. 
\end{equation*}
\item If $K$ is a pretzel knot $K_{p,q,r}$ for odd numbers $p,q,r$ assume $|p|$ is largest, it follows that
\begin{equation*}
\left[M_K : Y_{\Ral}\right] \leq 4p^2. 
\end{equation*}
\item For knots with Alexander polynomial of degree $n$ it follows that $n \leq c_K-1$, we have
\begin{equation*}
\left[M_K : Y_{\Ral}\right] \leq 2^{2n^2}. 
\end{equation*}
\end{enumerate}
\end{theorem1b}
For many examples the bound provided by Theorem $1$ is much larger than needed.

 	 As we have mentioned, the Alexander polynomial is a well known invariant of the knot group, denoted $\Ap(t)$, defined in $1923$ by J.W. Alexander \cite{Ale1}. Since then many authors have formulated equivalent definitions of the Alexander Polynomial (\cite{Ale1}, \cite{Fox1}, \cite{deR1}, \cite{Rolf1}, \cite{Neu1}). In order to prove Theorem $1$ we generalize the construction due to de Rahm \cite{deR1}, which which simultaneously defines the Alexander polynomial and constructs representations to affine-linear groups over $\C$. Using this point of view we are able to construct explicit, finite, metabelian representations of the knot group by generalizing de Rham's construction to an arbitrary finite field.
 	
\begin{theorem2}
There exists a homomorphism $\Ral:\Ga \rightarrow \mathrm{GL}_2\left(\Fp(\alpha)\right)$ for $p$ a prime if and only if $\alpha$ is a non-zero root of $\Ap(t) \pmod{p}$ in some finite extension of $\Fp$. This representation satisfies:

\begin{itemize}
\item $\Ral\left( \Ga \right)$ is metabelian, in particular non-abelian.
\item $|\Ral\left( \Ga \right)|=np^d$, where $n=\mathrm{ord}_{\Fp^*(\alpha)}(\alpha)$ and $d=\left[\Fp(\alpha):\Fp\right]$.  
\end{itemize}
\end{theorem2}

The quotients in Theorem $2$ being metabelian should come as no suprise. The group $\sfrac{\Ga}{\Ga''} \cong \Z \ltimes \sfrac{\Ga'}{\Ga''}$, is a metabelian group, and such finite metabelian quotients of the knot group have been extensively studied. Fox, Artin, Hartley, and Neuwirth are the pioneers in the study of metabelian covers of knots. Fox \cite{Fox4}, \cite{Fox5} describes the fundamental group of the branched cover corresponding to metacyclic representations for doubled knots. M. Artin \cite{Art1} computed the first homology groups for the same covers described in \cite{Fox5} in his senior thesis at Princeton. R. Hartley \cite{Hart1} provided a necessary and sufficient criterion for a knot to admit a finite quotient to a specific class of metabelian groups; this criterion is given in terms of the abelianization of the fundamental group of the finite cyclic covers. Lastly L. P. Neuwirth \cite{Neu1} provided a criterion in terms of the Alexander polynomial similar to what we will describe to ensure that a knot group surjects to a metacylic group. More recently, a general study of metabelian representations to $\mathrm{SL}(n,\C)$ has been a fruitful area; for example see \cite{Fri1}, \cite{BF1}, \cite{Jeb1}.

	We then turn our attention to the topological properties of the regular covers $X_{\Ral}$. From a computational point of view, the construction of $X_{\Ral}$ provides us with a large family new manifolds to examine and draw new intuition from. Their are many questions to address with regards to these regular covers. For this paper we focus on the groups $H_1\left(X_{\Ral}\right)$, in particular the computation of $\beta_1\left(X_{\Ral}\right)$. .
	
	In an homage to Thurston's work on the virtual properties of $3$-manifolds, Ian Agol's \cite{AVP1} 2014 ICM address highlighted the current state of the art for determining those properties of $3$-manifolds. His address was focused on establishing a connection between results of Haglund and Wise and current geometric methods to answer 4 of Thurston's list of 24 problems involving virtual properties of $3$-manifolds. One question, in particular, of Thurston's involved the virtual first betti number. The virtual first betti number is defined to be 
\begin{equation*}	
	v\beta_1(M)= \text{sup}\{\beta_1(\widehat{M}) \ | \ \widehat{M} \rightarrow M \text{ is a finite cover}\},
\end{equation*}	
	 Thurston asks the question: Can a closed aspherical $M$ have $v\beta_1(M) =\infty$? Agol goes on to answer this question in the positive, a consequence of the Virtual Haken, and Virtual Fibering theorems for closed manifolds. However for manifolds, $M$ with non-empty incompressible boundary it is a consequence of the The Siefert Fiber Theorem, The Torus Theorem, and the facts about peripheral subgroup separation that $v\beta_1(M)=\infty$. Furthermore the seminal paper of D. Cooper, D. Long, and A. Reid \cite{CLR1} from 1997 showed that for bounded $3$ manifolds ``non-peripheral" homology becomes unbounded in finite covers, we hope to provide an explicit construction of this statement.
	 
	 There is an extensive understanding of the topological and algebraic properties of finite cyclic covers of knot complements. In particular, complete information of the first homology groups of the cyclic covers $X_n$ can be determined directly from the Alexander polynomial of the knot $K$. Ralph Fox \cite{Fox3} using his free differential calculus showed that both the free rank (the first betti number) and the order of the torsion subgroup of $H_1(X_n)$ can be directly computed from the Alexander polynomial. In particular, the first betti number of the $X_n$ is $1$ except when $\Ap(t)$ an $n$th root of unity as a root. An immediate consequence of this is that $\beta_1(X_n) \leq \text{deg}(\Ap(t))+1$, for any $n$. The results of \cite{CLR1} provide the existence of covers with arbitrarily large betti number, and by such covers cannot be the cyclic covers of a knot complement.
	 
	We then turn our attention to the computation of $\beta_1(X_\alpha)$, as a first step in understanding such covers. Also by doing this we provide an alternate proof of corollary $1.4$ in \cite{CLR1} in the case of a knot complement with non-trivial Alexander polynomial.
	 	  
\begin{theorem3}\label{thm:thm3}
Let $p$ be the minimal prime such that $\Ap(t) \pmod{p}$ is non-trivial, $\alpha$ a root of $\Ap(t) \pmod{p}$ with $d=[\Fp(\alpha):\Fp]$ and $\mathrm{ord}_{\Fq^*}(\alpha)=n$, then the covers $X_{\Ral}$ satisfy
\begin{equation*}
 p^d-1+\beta_1(X_{n}) \leq \beta_1(X_{\Ral}) \leq (n(c_K-1))(p^d-1)+\beta_1\left(X_{n}\right).
\end{equation*}   
\end{theorem3}

The lower bound is a direct computation of the number of boundary components of the cover $X_{\Ral}$, along with basic facts about finite covering spaces. The upper bound here is a consequence of E. Hironaka's theory of Alexander stratifications and jumping loci \cite{Hir1}.

An immediate corollary of Theorem $\ref{thm:thm3}$ is:
\begin{theorem4}
When $K$ is a knot with non-trivial Alexander polynomial,
\begin{equation*}
v\beta_1(M_K) = \infty.
\end{equation*}
\end{theorem4} 

	 We will show in $\S 5$ there are knots for which $X_\alpha$ has non-peripheral elements in first homology suggesting that our methods often lead to concrete constructions of covers whose non-peripheral first homology becomes arbitrarily large. Which suggests that the covers $X_{\Ral}$ might provide a concrete construction to the famous result of Cooper, Long, and Reid in this case.  
	
	Finally we also study torsion in the first homology groups of these covers. Since $H_1(X; \Z)$ is a finitely generated abelian group, for any compact manifold $X$ is isomorphic to the group $\Z^{\beta_1(X)} \oplus T(H_1(X;\Z))$, here $T(H_1(X;\Z ) )$ is the torsion supgroup. The study of $T(H_1(N_j;\Z ) )$ for finite sheeted covers $N_j$ of a 3-manifold, $N$ is recently of significant interest. Fox's results \cite{Fox3} include an explicit formula for the order of the torsion subgroup of $T(H_1(X_n,\Z))$. This has lead to many results describing the growth of torsion in finite cyclic covers. In particular  Gordon \cite{Gord1} showed linear growth in the torsion subgroup of $H_1(X_n)$ as $n \rightarrow \infty$ for infinite classes of knots. Independently Riley \cite{Ril1}, Gonzalez-Acu\~{n}a and Short \cite{GAS1} , and Weber \cite{Web1} were able to build on Gordon's work to show exponential growth of the order of torsion through the cyclic covers of a non-trivial knot complement. 
	
	The torsion subgroup of $H_1(N_j)$ is of particularly importance when covers $N_j$ arrange into a tower of covers in the following way:
\begin{equation*}
\cdots \rightarrow N_j \rightarrow \cdots \rightarrow N_1 \rightarrow N
\end{equation*}
so that $N_i \rightarrow N_{i-1}$ is finite sheeted for all $i$. Recent work of H. Baik, D. Bauer, I. Gekhtman, U. Hamenst\"{a}dt, S. Hensel, T. Kastenholz, B. Petri, and D. Valenzuela \cite{BBGHHKPV} showed that exponential torsion growth is a generic property of random 3-manifolds. Furthermore, when the 3-manifold is endowed with a hyperbolic metric and $\bigcap_{i=1}^{\infty}\pi_1(N_i)=\{1\}$ the asymptotics of torsion growth is conjecture to have close relationship with the hyperbolic volume of $M$. In their seminal paper, N. Bergeron and A. Venkatesh formulate a conjecture \cite{BV1} (Conjecture $1.3$) to describe this asymptotic growth phenomenon. The only cases for which there are complete results in this direction are in the case of the cyclic covers of hyperbolic knot complements this is due to T. L\^{e} \cite{Le1} and independently J. Raimbault \cite{Raim1}, where the towers of cyclic covers are not exhaustive, but a similar behavior is exhibited. We will conclude the paper providing tables of computations and highlighting certain relationships between the torsion subgroups of $H_1(X_\alpha; \Z)$ and $H_1(X_n; \Z)$.

\section{The Alexander Polynomial}
\label{sec:alexanderpolynomial}

\subsection{Preliminaries}
\label{sec:prelim}

Let $\Ap(t)$ denote the Alexander polynomial of $K$ see \cite{Neu1} or \cite{Rolf1} for some of the many definitions. We use the notation $\Gamma'=[\Gamma,\Gamma]$, $\Gamma''=[\Gamma',\Gamma']$, and $\Gamma^{(i)}=[\Gamma^{(i-1)},\Gamma^{(i-1)}]$ for the $i^{\text{th}}$ iterated commutator subgroup. The classical definition of the Alexander Polynomial due to Alexander considers the split short exact sequence:
\begin{equation*}
1 \rightarrow \sfrac{\Ga'}{\Ga''} \rightarrow \sfrac{\Ga}{\Ga''} \rightarrow \Z \rightarrow 1.
\end{equation*}
The group $\sfrac{\Ga}{\Ga''}$ is a semi-direct product $\Z \ltimes \sfrac{\Ga'}{\Ga''}$, hence $\sfrac{\Ga'}{\Ga''}$ is a finite $\Z[t,t^{-1}]$ module \cite{Ale1}. Alexander proved that the annihilator of the action is a principal ideal, so it is generated by a single Laurent polynomial, $\Ap(t)$. Furthermore since $\sfrac{\Ga'}{\Ga''}$ is a finitely generated $\Z[t,t^{-1}]$ module, there exists a presentation matrix $A(t)$ of rank $k$ over $\Z[t,t^{-1}]$, called the Alexander matrix. The $i$th Alexander ideal is then the principal ideal generated by the $(k-i)$ minors of $A(t)$, and therefore $\Ap(t)$ is the generator (up to multiplication by a unit of $\Z[t,t^{-1}]$) of the zeroth Alexander ideal. We denote by $\Ap_{i}(t)$ the generator of the $i$th Alexander ideal, thus $\Ap_{i}(t)$ is the $i$th invariant factor of $A(t)$.

Denote by $\iota$, the canonical ring homomorphism $\iota:\Z \rightarrow \kappa$ for any field $\kappa$, determined by
\begin{equation*}
\mathds{1}_{\Z} \mapsto \mathds{1}_{\kappa}.
\end{equation*}
We are using the convention that $\mathds{1}_R$ is the unit in the unital ring $R$. Considering the images of the coefficients of the entries of $A(t)$ under $\iota$, we denote the resulting matrix by $A_{\kappa}(t)$. Thus $A_{\kappa}(t)$ presents $\sfrac{\Ga'}{\Ga''}$ as a $\sfrac{\Z}{\text{ker}(\iota)}[t,t^{-1}]$ module. With $\kappa$ a field $\text{ker}(\iota)=p\Z$ for $p=0$ or a prime ($p$ is the characteristic of $\kappa$). Assuming that $p$ is prime is finite, and consequentially non-zero, up to field isomorphism it follows that $A_{\kappa}(t)=A_{\Fp}(t)$, and presents $\sfrac{\Ga'}{\Ga''}$ as an $\Fp[t,t^{-1}]$ module. Furthermore since $\Fp[t,t^{-1}]$ is a principal ideal domain we define $\Apfi(t)$ to be the $i$th invariant factor of $A_{\kappa}(t)$, hence $\Apf(t)=\Delta_{p_0}(t)$. We call $\Apf(t)$ the Alexander Polynomial with coefficients in $\Fq$. For $p =0$, the image is isomorphic to $\Z$ so this construction yields the classical Alexander polynomial $\Ap(t)$. In $\S 3$ we verify that $\Apf(t)$ and $\Ap(t) \pmod{p}$ are equivalent. 

\begin{proposition}[\cite{Rolf1}]\label{prop:rofp}
Let $\Ap(t)=\Sigma_{i=-k}^la_it^i$ be the Alexander polynomial of a knot $K$ and define $\mathrm{deg}(\Ap(t))=l+k$, then
\begin{itemize}
\item[1)] $\Ap(1) = \pm 1$,
\item[2)] $a_{l-j}=a_{j-k}$ for $j=0,\ldots,\frac{l+k}{2}-1$,
\item[3)] $\mathrm{deg}(\Ap(t)) \leq c_K-1$, where $c_K$ is the crossing number of $K$.
\end{itemize}
\end{proposition}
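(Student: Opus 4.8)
These three items are classical facts about $\Ap(t)$ (see \cite{Rolf1}); the plan is to deduce (1) and (2) from a Seifert matrix of $K$, and (3) from the Fox free differential calculus applied to a minimal Wirtinger presentation. Throughout I write ``$\Ap(t)\doteq f(t)$'' for equality up to a unit $\pm t^{m}$ of $\Z[t,t^{-1}]$, and I use the standard fact that if $V\in\mathrm{M}_{2g}(\Z)$ is a Seifert matrix for a genus-$g$ Seifert surface $F$ of $K$, then $\Ap(t)\doteq\det\!\left(V-tV^{T}\right)$.

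For (1) I would evaluate this at $t=1$: $\Ap(1)\doteq\det\!\left(V-V^{T}\right)$, and $V-V^{T}$ is the matrix of the intersection pairing on $H_{1}(F;\Z)$, hence unimodular, so $\det\!\left(V-V^{T}\right)=\pm1$, which pins $\Ap(1)$ to $\pm1$ since $\pm1$ are the only units of $\Z$. (Alternatively, (1) can be read off from the Wang sequence of the infinite cyclic cover: since $H_{1}(M_{K})\to H_{0}$ of the cover is an isomorphism, $t-1$ acts surjectively on the Alexander module $\sfrac{\Ga'}{\Ga''}$, so $\sfrac{\Ga'}{\Ga''}\big/(t-1)=0$ and hence $|\Ap(1)|=1$; this is closer to the module picture used in the paper.) For (2) I would substitute $t\mapsto t^{-1}$, clear the $2g$ powers of $t^{-1}$, pull out $(-1)^{2g}=1$, and use $\det(A^{T})=\det(A)$ to obtain $\Ap(t^{-1})\doteq\Ap(t)$. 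Writing $\Ap(t)=\sum_{i=-k}^{l}a_{i}t^{i}$ with $a_{-k},a_{l}\neq0$ and comparing supports of $\Ap(t^{-1})$ with $\pm t^{m}\Ap(t)$ forces $m=k-l$; matching coefficients then yields $a_{l-j}=a_{j-k}$, the $+$ sign being exactly the effect of normalizing $\Ap$ to be (centrally) symmetric as in \cite{Rolf1}.

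For (3) I would fix a diagram $D$ of $K$ with $c_{K}$ crossings and take the Wirtinger presentation $\Ga=\langle x_{1},\dots,x_{c_{K}}\mid r_{1},\dots,r_{c_{K}}\rangle$ (one meridian per arc, one relator $r_{s}$ of the form $x_{j}=x_{i}^{\pm1}x_{l}x_{i}^{\mp1}$ per crossing). One relator is redundant; after deleting it, the Jacobian $\left(\partial r_{s}/\partial x_{m}\right)$ of Fox derivatives, pushed forward along the abelianization $x_{m}\mapsto t$, is a $(c_{K}-1)\times c_{K}$ matrix whose entries all lie in $\{0,\pm1,\pm t,\pm(1-t)\}$, hence are polynomials of degree $\le1$. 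Deleting any one column gives a $(c_{K}-1)$-square matrix $A'(t)$, and by the usual consequence of the fundamental identity $\sum_{m}\tfrac{\partial r}{\partial x_{m}}(x_{m}-1)=r-1$ the various column-deleted minors are all associates and generate the first elementary ideal, so $\Ap(t)\doteq\det A'(t)$; a determinant of a $(c_{K}-1)$-square matrix with entries of degree $\le1$ has degree $\le c_{K}-1$, giving $\deg\Ap(t)\le c_{K}-1$. (One could instead run Seifert's algorithm on $D$ to produce a Seifert surface $F$ with $b_{1}(F)=c_{K}-s+1\le c_{K}-1$, $s\ge2$ the number of Seifert circles, and use $\deg\Ap(t)\le b_{1}(F)$.)

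None of this is deep — it is bookkeeping with standard tools — so there is no serious obstacle; the two points I would be most careful about are: (i) in (2), producing the honest $+$ sign in $a_{l-j}=a_{j-k}$ rather than only $\Ap(t)\doteq\Ap(t^{-1})$, which is purely a matter of fixing the symmetric normalization of $\Ap$; and (ii) in (3), the identity $\Ap(t)\doteq\det A'(t)$, i.e.\ that the $(c_{K}-1)$-square column-deleted minors of the reduced Wirtinger Jacobian all coincide up to units and generate the Alexander ideal. Both are classical and may be quoted from \cite{Rolf1} (and \cite{Fox1} for the free differential calculus); I would nonetheless include the short Seifert-matrix computation for (1)--(2) and the degree count for (3), since they are self-contained.
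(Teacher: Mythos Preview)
The paper does not prove this proposition; it is quoted directly from \cite{Rolf1} as a classical fact and used as input for Lemma~\ref{lem:2.2} and the arguments in \S\ref{sec:Theorem1}. Your proposal is correct and is exactly the standard textbook argument one would find in Rolfsen: Seifert matrix for (1)--(2), Fox calculus on a minimal Wirtinger presentation for (3), with the degree bound coming from the fact that the $(c_K-1)\times(c_K-1)$ Jacobian has entries of degree at most $1$ in $t$.
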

We say $\Ap(t)$ is trivial if $\Ap(t)=\pm t^n$.
\begin{lemma} \label{lem:2.2}
Suppose $\Ap(t)$ is non-trivial, then it has at least $3$ non-zero coefficients.
\end{lemma}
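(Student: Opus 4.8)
The plan is a short argument by contradiction that plays items 1) and 2) of Proposition~\ref{prop:rofp} against each other. Assume $\Ap(t)$ is non-trivial yet has at most two non-zero coefficients, and show it must in fact be trivial.

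First I would rule out the one-coefficient case. Since $\Ap(t)$ is not identically zero (it satisfies $\Ap(1)=\pm 1$), a single non-zero coefficient means $\Ap(t)=a_m t^m$ for some $m$, and then $\Ap(1)=a_m$ forces $a_m=\pm 1$, so $\Ap(t)=\pm t^m$ is trivial --- a contradiction. Now suppose $\Ap(t)=a_p t^p+a_q t^q$ with $p<q$ and $a_p,a_q$ non-zero (necessarily integer) coefficients, so that in the notation of Proposition~\ref{prop:rofp} one has $-k=p$ and $l=q$. The palindromic symmetry $a_{l-j}=a_{j-k}$ evaluated at $j=0$ yields $a_q=a_p$; call this common value $c$, a non-zero integer. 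Then $\Ap(1)=a_p+a_q=2c$, which is never $\pm 1$, contradicting item 1) of Proposition~\ref{prop:rofp}. Hence a non-trivial $\Ap(t)$ must carry at least three non-zero coefficients.

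The one point needing care is to be sure that $j=0$ is a legitimate instance of the symmetry in item 2) even when $\deg\Ap(t)=l+k$ is very small (for instance when the two surviving monomials are adjacent). This is handled by invoking the full symmetry of the Alexander polynomial --- $\Ap(t)$ and $\Ap(t^{-1})$ agree up to a unit of $\Z[t,t^{-1}]$, of which item 2) records only the independent relations --- together with $\Ap(1)\neq 0$ to pin the otherwise ambiguous global sign as $+1$. Beyond that bit of bookkeeping there is no real obstacle: the content of the lemma is just the observation that an integer palindrome supported on two monomials takes an even value at $t=1$, hence cannot be an Alexander polynomial.
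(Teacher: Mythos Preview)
Your argument is correct and follows essentially the same route as the paper's proof: eliminate the single-coefficient case via $\Ap(1)=\pm 1$, then in the two-coefficient case use the palindromic symmetry to force the two coefficients equal, whence $\Ap(1)$ is even. Your extra care in justifying the symmetry instance (via the full relation $\Ap(t)\doteq\Ap(t^{-1})$ and $\Ap(1)\neq 0$ to fix the sign) is a refinement the paper skips, but the strategy is the same.
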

\begin{proof}
Suppose $\Ap(t)$ has one non-zero coefficient $a_0$. Then by~\ref{prop:rofp}$(1)$, $\Ap(1)=a_0=\pm1$, hence $\Ap(t)$ is trivial. Thus, $\Ap(t)$ has at least $2$ non-zero coefficients $a_0$ and $a_1$ and by $2)$ they must be equal however, since $\Ap(1)=\pm2a_0$ which cannot be $1$, and the lemma follows.  
\end{proof} 
      
 We, however, bring attention to a definition of $\Ap(t)$ due to de Rham \cite{deR1}. The definition of de Rham is of particular importance to us because it allows us to simultaneously define $\Apf(t)$ and construct non-abelian representations to finite groups.  

\subsection{de Rham's Construction}
\label{sec:deRham}
 
	For a field $\kappa$ de Rham's construction begins by attempting to define a homomorphism, 
\begin{equation*}	
\varphi: \Ga\rightarrow\text{Aff}(\kappa) = \{tz+x \ | \ t \in \kappa^*, \ x \in \kappa \}.
\end{equation*} 
Using the Wirtinger presentation for $\Ga$, we have that given a diagram of $K$ with $n$ crossings
\begin{equation} \label{eq:wirt}
\Ga = \langle x_1, x_2, \ldots x_n \ \mid \  x_ix_{j(i)} = x_{j(i)}x_{i+1}  \text{ \ or \ }  x_{i+1}x_{j(i)} = x_{j(i)}x_{i} \rangle. 
\end{equation}  
It is important to note that this is a balanced presentation (the number of generators is equal to number of relators). See Figure 1 for the definition of $x_{j(i)}$ and the relations in $\Ga$.

\begin{figure}[h]
\centering
\begin{tikzpicture}[scale=1.75,knot gap=10,framed]
\draw[decoration={markings, mark=at position 1 with {\arrow[blue,thick]{>}}},postaction={decorate}, thin, knot=blue](-1,.25)--(-.25,1); 
\draw (-1,.65) node {$x_{j(i)}$};
\draw (1,-.65) node {$x_{j(i)}$};
\draw (-1,-.65) node {$x_{i}$};
\draw (1,.65) node {$x_{i+1}$};
\draw (0,-1.5) node {$(+)$};
\draw[decoration={markings, mark=at position 1 with {\arrow[blue,thick]{>}}},postaction={decorate}, thin, knot=blue](.25,-1)--(1,-.25);
\draw[decoration={markings, mark=at position 1 with {\arrow[blue,thick]{>}}},postaction={decorate}, thin, knot=blue](-.25,-1)--(-1,-.25);
\draw[decoration={markings, mark=at position 1 with {\arrow[blue,thick]{>}}},postaction={decorate}, thin, knot=blue](1,.25)--(.25,1);
\draw[decoration={markings, mark=at position 1 with {\arrow[black,thick]{>}}},postaction={decorate}, thick, knot=black](-1,-1)--(1,1);
\draw[decoration={markings, mark=at position 0 with {\arrow[black,thick]{<}}},postaction={decorate}, thick, knot=black](-1,1)--(1,-1);
\begin{scope}[xshift=4cm]
\draw[decoration={markings, mark=at position 0 with {\arrow[blue,thick]{<}}},postaction={decorate}, thin, knot=blue](-1,.25)--(-.25,1); 
\draw (-1,.65) node {$x_{j(i)}$};
\draw (1,-.65) node {$x_{j(i)}$};
\draw (-1,-.65) node {$x_{i}$};
\draw (1,.65) node {$x_{i+1}$};
\draw (0,-1.5) node {$(-)$};
\draw[decoration={markings, mark=at position 0 with {\arrow[blue,thick]{<}}},postaction={decorate}, thin, knot=blue](.25,-1)--(1,-.25);
\draw[decoration={markings, mark=at position 1 with {\arrow[blue,thick]{>}}},postaction={decorate}, thin, knot=blue](-.25,-1)--(-1,-.25);
\draw[decoration={markings, mark=at position 1 with {\arrow[blue,thick]{>}}},postaction={decorate}, thin, knot=blue](1,.25)--(.25,1);
\draw[decoration={markings, mark=at position 1 with {\arrow[black,thick]{>}}},postaction={decorate}, thick, knot=black](-1,-1)--(1,1);
\draw[decoration={markings, mark=at position 0 with {\arrow[black,thick]{<}}},postaction={decorate}, thick, knot=black](-1,1)--(1,-1);
\end{scope}
\end{tikzpicture}
\caption{The two cases for relations in $\Ga$.}
\end{figure}
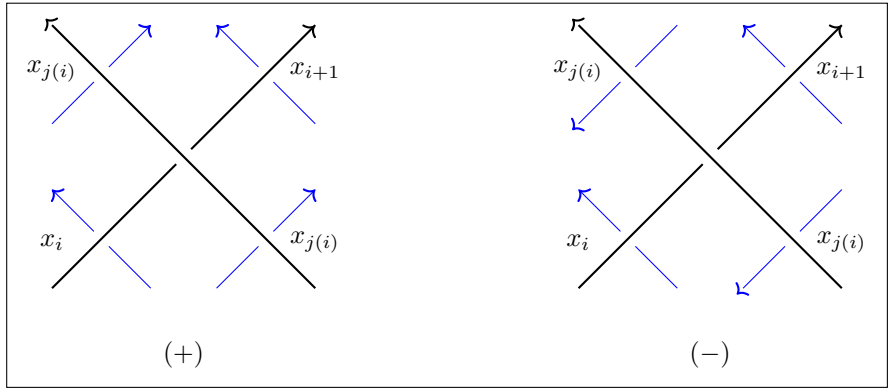

Now, $\varphi: \Ga \rightarrow \text{Aff}(\kappa)$ is a homomorphism if and only if $x_k \mapsto t_kz+y_k$ with $t_k \in \kappa^*$ and $y_k \in \kappa$ for $1\leq k \leq n$. There are two equations that could hold, one coming from each case of the relations: 
\begin{equation}\tag{$+$}
t_it_{j(i)}z+t_iy_{j(i)}+y_i = t_{j(i)}t_{i+1}z+t_{j(i)}y_{i+1}+y_{j(i)}
\end{equation}
\begin{equation}\tag{$-$}
t_{i+1}t_{j(i)}z+t_{i+1}y_{j(i)}+y_{i+1} = t_{j(i)}t_{i}z+t_{j(i)}y_{i}+y_{j(i)}
\end{equation}
Analyzing the the coefficient of $z$ we have; $t_it_{j(i)}-t_{j(i)}t_{i+1}=0$, hence $t_i=t_{i+1}$ for all $i$, renaming $t_k:=t$ for all $k$. The equations simplify to
\begin{equation}\tag{$+$}
(t-1)y_{j(i)} + y_i - ty_{i+1} = 0
\end{equation}
\begin{equation}\tag{$-$}
(t-1)y_{j(i)}-ty_{i}+y_{i+1} = 0.
\end{equation}

Let $A_{\kappa}(t) \in \mathrm{Mat}_{n\times n}\left(\kappa[t,t^{-1}]\right)$ be the presentation matrix for the equations above. The matrix $A(t)$ will denote the above presentation matrix when $\kappa$ has characteristic $0$, and in the case of de Rham $\kappa=\C$. The polynomial $\Apk(t) \in \kappa[t,t^{-1}]$ is defined to be the largest invariant factor of $A_{\kappa}(t)$. In fact $\Apk(t)$ can be taken to be a polynomial in $\kappa[t]$; we will often use this fact without explicitly stating it. Thus we are able to conclude that there exists a homomorphism $\varphi:\Ga \rightarrow \text{Aff}(\kappa(\alpha))$ if and only if $\alpha$ is a root of $\Apk(t)$ in some finite extension of $\kappa$ as described above for some non-zero $(y_1,y_2,\ldots,y_n) \in \kappa(\alpha)^n$. In other words $(y_1,y_2,\ldots,y_n)$ is a non-zero vector contained in the kernel of $A_{\kappa}(\alpha)$.

Suppose that $\alpha$ is a root of $\Apf(t)$ and $\alpha \in \Fp(\alpha)$, here $d=[\Fp(\alpha):\Fp]$. It follows from the above discussion that there exists a homomorphism $\varphi:\Ga \rightarrow \mathrm{Aff}(\kappa(\alpha))$. We will simplify notation and denote the homomorphism $\varphi$ by $\Ral$ to indicate that this homomorphism only depends on the root $\alpha$ of $\Apf(t)$.

\begin{proposition} \label{prop:2.3}
Suppose that $\alpha \in \Fq$ is a non-zero root of $\Apf(t)$, then 
\begin{equation*}
\Ral(\Ga) \cong \Gal = \langle\alpha\rangle \ltimes \Fq \leq \mathrm{Aff}(\Fq).
\end{equation*}
\end{proposition}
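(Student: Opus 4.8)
The plan is to show that the image of the Wirtinger generators under $\Ral$ generates exactly the subgroup $\langle\alpha\rangle \ltimes \Fq$ of $\mathrm{Aff}(\Fq)$, and then argue that $\Ral$ is in fact injective modulo nothing, i.e.\ that $\Ral(\Ga)$ is isomorphic to this group. First I would recall from de Rham's construction that, after the normalization $t_k = t = \alpha$ for all $k$, each generator is sent to $x_k \mapsto \alpha z + y_k$, where $(y_1,\dots,y_n) \in \Fq^n$ is a fixed non-zero vector in $\ker A_{\Fp}(\alpha)$. Thus every $\Ral(x_k)$ has linear part $\alpha$, so $\Ral(\Ga) \subseteq \langle\alpha\rangle \ltimes \Fq$ automatically, since the linear parts of products lie in $\langle\alpha\rangle$ and the translation parts lie in $\Fq$.

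Next I would establish the reverse inclusion. The key observation is that the translation subgroup: consider $\Ral(x_1 x_k^{-1})$, which has trivial linear part and translation part $y_1 - y_k$ (up to the appropriate sign/scaling coming from the affine group law — I would compute this carefully). More usefully, the abelianization map $\Ga \to \Z$ sends each $x_k$ to $1$, so $\Ga' $ is generated by the $x_1 x_k^{-1}$, and $\Ral$ restricted to $\Ga'$ lands in the translation subgroup $\Fq$. I then need that the $\Fp$-span (indeed the $\Fp[\alpha]$-span, since conjugation by $\Ral(x_1)$ multiplies a translation by $\alpha$) of $\{y_1 - y_k : k\}$ is all of $\Fq$. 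Since $\Fq = \Fp(\alpha)$ is generated as a field — hence as an $\Fp$-algebra — by $\alpha$, and the $\Fp[\alpha]$-module generated by any single nonzero translation is a nonzero submodule of the field $\Fq$, it is all of $\Fq$; so it suffices to exhibit one $k$ with $y_1 \neq y_k$. If all $y_k$ were equal to a common value $y$, then the $(\pm)$ equations would force $(\alpha - 1)y = 0$, and since $\alpha \neq 1$ (because $\Ap(1) = \pm 1 \neq 0$, so $1$ is not a root of $\Apf$ for $p$ odd; the prime $p=2$ and $\Ap$ nontrivial case I would handle using Lemma~\ref{lem:2.2}, which guarantees enough nonzero coefficients that $\Apf$ has a root $\alpha\neq 1$) we would get $y = 0$, contradicting that $(y_1,\dots,y_n)$ is non-zero. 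Hence the translation parts of $\Ral(\Ga')$ span $\Fq$ over $\Fp[\alpha] = \Fq$, giving the full translation subgroup, and combined with $\Ral(x_1)$ having linear part $\alpha$ of order $n = \mathrm{ord}(\alpha)$, we get $\Ral(\Ga) = \langle\alpha\rangle \ltimes \Fq$.

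Finally, since the statement asks for an isomorphism $\Ral(\Ga) \cong \langle\alpha\rangle\ltimes\Fq$, and we have just shown $\Ral(\Ga)$ \emph{equals} that subgroup of $\mathrm{Aff}(\Fq)$, we are done; no injectivity of $\Ral$ itself is needed. I expect the main obstacle to be the bookkeeping in the second step: verifying that the translation parts genuinely generate $\Fq$ as a module, i.e.\ correctly identifying the $\Fp[\alpha]$-module structure on the translation subgroup induced by conjugation by $\Ral(x_1)$, and cleanly ruling out the degenerate case where all $y_k$ coincide — for which the characteristic-2 subtlety and the appeal to Lemma~\ref{lem:2.2} and Proposition~\ref{prop:rofp}(1) are the crucial inputs.
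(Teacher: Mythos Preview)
Your overall strategy matches the paper's: pass to the generators $s_i = x_i x_1^{-1}$ of $\Ga'$, observe that they map to pure translations, and use conjugation by $\Ral(x_1)$ (which multiplies the translation part by $\alpha$) to obtain the $\Fp[\alpha]=\Fq$--module structure and hence the full translation group. That part, and the easy inclusion $\Ral(\Ga)\subseteq \langle\alpha\rangle\ltimes\Fq$, are fine.

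The genuine gap is your step ruling out the degenerate case. You claim that if all $y_k$ are equal to some $y$, the $(\pm)$ equations force $(\alpha-1)y=0$. That is false: substituting $y_{j(i)}=y_i=y_{i+1}=y$ into either equation gives
\[
(t-1)y + y - t y \;=\; 0 \qquad\text{and}\qquad (t-1)y - t y + y \;=\; 0
\]
identically, for every value of $t$. In other words the constant vector $(1,\dots,1)$ always lies in $\ker A_\kappa(t)$; it corresponds exactly to the abelian representation $x_k\mapsto \alpha z + y$ (all generators to the same affine map), whose image is cyclic, not $\langle\alpha\rangle\ltimes\Fq$. So there is nothing in the equations that forbids a constant solution, and your appeal to $\alpha\neq 1$ (which, incidentally, needs no special treatment at $p=2$: $\Ap(1)=\pm1\not\equiv 0$ in any characteristic) does not rescue the argument.

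What actually does the work is the hypothesis that $\alpha$ is a root of $\Apf(t)$. Since $\Apf$ is the generator of the ideal of $(n-1)\times(n-1)$ minors of the $n\times n$ matrix $A_{\Fp}(t)$, having $\Apf(\alpha)=0$ forces $\mathrm{rank}\,A_{\Fp}(\alpha)\le n-2$, hence $\dim\ker A_{\Fp}(\alpha)\ge 2$. Thus one may \emph{choose} a kernel vector that is not a multiple of $(1,\dots,1)$; equivalently, after conjugating so that $y_1=0$ (as the paper does), there remains some $y_j\neq 0$. From that nonzero translation the $\Fp[\alpha]$--module argument you outlined produces all of $\Fq$. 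Replace your ``$(\alpha-1)y=0$'' paragraph with this dimension count and the proof goes through.
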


\begin{proof}
We first consider an alternate presentation of $\Ga$, using the presentation $(12)$, denote by $R_j$ the relations for $\Ga$. The new generating set is defined to be $\{s_i\}_{i=1}^n$, with $s_i:=x_ix_1^{-1}$ for $i\neq 1$ and $s_1=x_1$. New relations, $R'_j$, are formed from the relations $R_j$ by setting $R'_j(s_1,\ldots,s_n):=R_j(s_1,s_2x_1,\ldots,s_nx_1)$. With this presentation we have that each $s_i$ for $i \geq 2$ is an element of $\Ga'$, since the image $[s_i]$ of $s_i$ in $H_1\left( M_K \right)$ is $[x_i]-[x_1]=0$.

We have $\Ral(s_1)=\alpha z+y_1$ and we may assume that up to conjugation in $\text{Aff}(\Fq)$, that $y_1=0$. However for $i \neq 1$, since $s_i \in \Ga'$ we have 
\begin{equation*}
\Ral(s_i) \in \Ral\left(\Ga'\right) \unlhd \{ z+y_i \in \mathrm{Aff}(\Fq) \ | \  y_i \in \Fq \}
\end{equation*}
therefore if $i \geq 2$, then $\Ral(s_i)=z+y_i$ with all $y_i \in \Fq$. By construction of $\Ral$ there is a non-zero vector $(y_1,\ldots,y_k)$ contained in the kernel of $A_{\Fq}(\alpha)$, so we may assume that $y_j$ is non-zero. Furthermore by definition
\begin{equation*}
\{\Ral(s_1^{k}s_js_1^{-k})\}_{k=0}^d=\{z+\alpha^k y_j\}_{k=0}^{d}.
\end{equation*}
Now $s_j$ and $s_1^ks_js_1^{-k}$ have infinite order in $\Ga$, furthermore the image $z+y_j$ and $z+\alpha^k y_j$ have additive order $p$. We have that $\alpha^k y_j \neq 0$ for all $0 \leq k \leq d$. Also $\alpha^k y_j \neq \alpha^l y_j$ for all $k \neq l$ with $0 \leq k \leq d$ and $0 \leq l \leq d$, otherwise if $\alpha^k y_j = \alpha^l y_j$, then without loss of generality assume $k>l$, so that $\left(\alpha^{k-l}-1 \right)y_j=0$, however this cannot be the case because $\mathrm{deg}_{\Fp}(\alpha) < \mathrm{ord}_{\Fq^*}(\alpha)$, therefore $\Ral(\Ga')=\Fq$. All that is left to do is to determine the image of the powers of $s_1$, but these are precisely the affine maps $\alpha^kz$ for $0 \leq k \leq \text{ord}_{\Fq^*}(\alpha)$, we conclude that $\Ral(\Ga) \cong \langle \alpha \rangle \ltimes \Fq$.          
\end{proof} 

For the rest of this paper the image $\Ral(\Ga)$ is denoted by $\Gal$, it is clear that $\Gal \cong \langle \alpha \rangle \ltimes \left(\sfrac{\Z}{p\Z}\right)^d$, and that $|\Gal|=\text{ord}_{\Fq^*}(\alpha)p^d$.

\subsubsection*{Example: The Figure $8$ Knot}
\label{sec:F8}

We begin the fiber bundle presentation for the figure $8$ knot complement. We use this specific presentation of the figure $8$ because defining a homomorphism such as in de Rham's construction does not depend on the presentation of the fundmental group. 
\begin{equation*}
\Ga = \langle t, x, y \ | \ txt^{-1}=xyx, tyt^{-1}=yx \rangle
\end{equation*} 

Let $p$ be a prime number and $d$ some positive integer; it will become clear what $d$ should be by the end of this computation. There is a homomorphism $\Ral: \Ga \rightarrow \mathrm{Aff}(\Fq)$ if and only if 
\begin{eqnarray*}
t &\mapsto & \alpha_0 z + b_0,\\
x &\mapsto &  \alpha_1 z + b_1,\\
y &\mapsto & \alpha_2 z + b_2
\end{eqnarray*}

We may assume up to conjugation of the image of $\Ral$ in $\mathrm{Aff}(\Fq)$ that $b_0=0$. Furthermore since $\Ga'= \langle x, y \rangle$ and $\mathrm{Aff}(\Fq)'=\{ z+b \ | \ b \in \Fq \}$ it follows that $\alpha_1=\alpha_2=1 \in \Fq^*$, so we rename $\alpha_0=\alpha$. We have 
\begin{eqnarray*}
t &\mapsto & \alpha z,\\
x &\mapsto &  z + b_1,\\
y &\mapsto &  z + b_2
\end{eqnarray*}

From the first relation, $txt^{-1}=xyx$, we have 
\begin{eqnarray*}
\alpha z+\alpha b_1 &=& \alpha z+2b_1+b_2,\\
0 &=& (2-\alpha )b_1+b_2.
\end{eqnarray*}

From the second relation, $tyt^{-1}=yx$, we have 
\begin{eqnarray*}
\alpha z+\alpha b_2 &=& \alpha z+b_1+b_2,\\
0 &=& b_1+(1-\alpha)b_2.
\end{eqnarray*}
It is important to note that the coefficients of the $b_1,b_2$ are elements of $\Fq$.

Now we have the presentation matrix for the above relations, viewed as a matrix over $\Fq[\alpha,\alpha^{-1}]$
\begin{equation*}
\begin{pmatrix}
2-\alpha & 1 \\
1   & 1-\alpha
\end{pmatrix}
\begin{pmatrix}
b_1 \\
b_2
\end{pmatrix}
=
\begin{pmatrix}
0 \\
0
\end{pmatrix}
\end{equation*} 
Therefore the above equation holds for
\begin{equation*}
\begin{pmatrix} 
b_1 \\ 
b_2
\end{pmatrix} 
\neq 
\begin{pmatrix} 0 \\ 
0
\end{pmatrix}
\end{equation*}
if and only if the largest invariant factors of the matrix 
\begin{equation*}
\begin{pmatrix}
2-\alpha & 1 \\
1   & 1-\alpha
\end{pmatrix}
\end{equation*}
generate the zero ideal. The largest invariant factor is $(\Apf(\alpha))=(\alpha^2-3\alpha+1)$  this is the zero ideal if and only if $\alpha$ is a non-zero root of $\Apf(t)$. The positive integer $d$ is then seen to be the degree of the extension $\sfrac{\Fp(\alpha)}{\Fp}$.

Suppose that $p=11$, then we have that $\Apf(t)=(t-5)(t-9)$. Let $\alpha=5 \in \mathbb{F}_{11}$, then it follows that the homomorphism $\Ral$ is completely described in the following way.
\begin{eqnarray*}
t &\mapsto & 5z,\\
x &\mapsto & z+1,\\
y &\mapsto & z+3 
\end{eqnarray*}
as elements of $\mathrm{Aff}\left(\mathbb{F}_{11}\right)$.

\subsection{Fox's Free Differential Calculus and Theorem 2}
\label{ref:FoxCalc}

We investigate a defintion of $\Ap(t)$ due to Fox \cite{Fox1}. This description is given in \cite{Hir1}, and we recall it here to expand on the details and adapt the definition to allow the derivative to take coefficients in an arbitray field $\kappa$. In this section we will explicitly show that the definition of $\Ap(t)$ due to de Rham agrees with the classical definition of $\Ap(t)$. We conclude with the proof of Theorem $2$.

Let $\Lambda_r(\Z)=\Z [t_1,t_1^{-1},\ldots,t_r,t_r^{-1}]$, the ring of integral Laurent polynomials in $r$ variables. The Fox derivative can be defined in the following way. Suppose $F_r$ is the free group on $r$ generators, and $\text{ab}: \Ga \rightarrow \Ga^{\text{ab}}$ is the canonical abelianizing homomorphism. Define the mapping $D_i:F_r \rightarrow \Z[F_r]$, to be
\begin{eqnarray*}
D_i(x_j)&=&\delta_{ij}\\
D_i(uv)&=&D_i(u)+uD_i(v).
\end{eqnarray*} 

The map $\text{ab}:F_r \rightarrow F_r^{\text{ab}}$ induces a mapping $(D_1,\ldots,D_r):F_r \rightarrow \Lambda_r(\Z)^r$ which we call the \emph{Fox Derivative}, and the $D_i$ are the $i$th partials. If we define the knot group $\Ga$ in the following way,
\begin{equation*}
\Ga=\langle F_r \ | \ \mathcal{R}_j \text{ for } j=1,\ldots,s \rangle 
\end{equation*}
we obtain the following short exact sequence
\begin{equation*}
1 \rightarrow F_s \rightarrow F_r \xrightarrow[]{q} \Ga \rightarrow 1.
\end{equation*}
Let $q^*$ be the induced mapping $q^*:\Z[F_r^{\text{ab}}] \rightarrow \Z[\Ga^{\text{ab}}]$. 
We are able to form the Alexander Matrix of $r \times s$ partials also known as the Jacobian of $\Ga$ 
\begin{equation*}
M(F_r,\mathcal{R}_j)=[q^*D_i(\mathcal{R}_j)].
\end{equation*}
We may evaluate the entries of $M(F_r,\mathcal{R}_j)$ by the canonical homomorphism $\iota:\Z \rightarrow \kappa$, and the resulting matrix $M(F_r,\mathcal{R}_j)_{\Fp}$ has entries which lie in $\kappa[\Ga^{\text{ab}}]$.

\begin{theorem}[Fox \cite{Fox1}]
The $i$th Alexander ideal $(\Ap_i(t))$ is the ideal generated by the $(r-i)\times (r-i)$ minors of $M(F_r,\mathcal{R}_j)$, thus $\Ap(t)$ is the largest invariant factor of $M(F_r,\mathcal{R}_j)$.
\end{theorem}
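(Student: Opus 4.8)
The plan is to recognize the Fox Jacobian $M(F_r,\mathcal{R}_j)$ as a presentation matrix of the Alexander module of $K$, extracted from a $2$-complex model of $\Ga$, and then to read off the statement about minors from the fact that the elementary (Fitting) ideals of a finitely presented module depend only on the module, not on the chosen presentation.

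First I would form the presentation $2$-complex $Y$ of $\Ga=\langle F_r\mid\mathcal{R}_1,\dots,\mathcal{R}_s\rangle$: one $0$-cell $v$, one oriented $1$-cell $e_i$ per generator $x_i$, and one $2$-cell $f_j$ attached along the word $\mathcal{R}_j$, so that $\pi_1(Y)\cong\Ga$. Let $\widetilde{Y}\to Y$ be the cover corresponding to $\ker(\mathrm{ab}\colon\pi_1(Y)\to\Ga^{\mathrm{ab}})$; since $\Ga^{\mathrm{ab}}\cong\Z=\langle t\rangle$ for a knot, the deck group is infinite cyclic and the cellular chain complex of $\widetilde{Y}$ is a complex of free $\Lambda:=\Z[t,t^{-1}]$-modules
\[
\Lambda^{s}\xrightarrow{\ \partial_{2}\ }\Lambda^{r}\xrightarrow{\ \partial_{1}\ }\Lambda .
\]
The essential input is the \emph{fundamental identity of the free differential calculus}: for suitable lifts of the cells, $\partial_{1}$ is the column whose $i$-th entry is $\mathrm{ab}(x_i)-1$ (all equal to $t-1$, since every Wirtinger generator is a meridian), while $\partial_{2}$ is exactly the matrix $[q^{*}D_i(\mathcal{R}_j)]=M(F_r,\mathcal{R}_j)$. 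I would prove this by lifting the attaching map of the $2$-cell $f_j$ to $\widetilde{Y}$ and recording, syllable by syllable in the word $\mathcal{R}_j$, how often and with which deck translate the lift runs across each lift of $e_i$; this is precisely the recursion $D_i(uv)=D_i(u)+uD_i(v)$, $D_i(x_k)=\delta_{ik}$, transported to $\Lambda$ along $q^{*}\circ\mathrm{ab}$. Since $\pi_1$ of a covering space is the corresponding subgroup, $H_1(\widetilde{Y})=\pi_1(\widetilde{Y})^{\mathrm{ab}}=\Ga'/\Ga''$, i.e. the classical Alexander module, with its $\Z$-action by conjugation. Putting $\mathcal{A}:=\mathrm{coker}(\partial_{2})=\Lambda^{r}/\mathrm{im}(\partial_{2})$, the tautological short exact sequence $0\to\ker(\partial_{1})/\mathrm{im}(\partial_{2})\to\mathcal{A}\to\mathrm{im}(\partial_{1})\to 0$ reads $0\to\Ga'/\Ga''\to\mathcal{A}\to(t-1)\Lambda\to 0$, and since $(t-1)\Lambda\cong\Lambda$ is free the sequence splits: $\mathcal{A}\cong(\Ga'/\Ga'')\oplus\Lambda$. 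Thus $M(F_r,\mathcal{R}_j)$ presents the Alexander module plus one extra free cyclic summand, and that summand is exactly what produces the offset between $r$ and the number of generators needed to present $\Ga'/\Ga''$; concretely, for the balanced Wirtinger presentation $r=s=n$ the full $n\times n$ minor of $M(F_r,\mathcal{R}_j)$ vanishes (its rows sum to zero because $\partial_{1}\partial_{2}=0$), while an $(n-1)\times(n-1)$ minor equals $\pm t^{k}\Ap(t)$.

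Next I would invoke invariance of elementary ideals. As recalled in \S\,\ref{sec:prelim}, the classical Alexander matrix $A(t)$ is by construction \emph{some} presentation matrix of $\Ga'/\Ga''$, and by Fitting's lemma the ideal generated by the minors of a fixed size of a presentation matrix of a module is an invariant of the module. Hence the minor ideals of $M(F_r,\mathcal{R}_j)$ agree, after the index shift above, with the Alexander ideals $(\Ap_i(t))$, and the largest invariant factor of (the reduced) $M(F_r,\mathcal{R}_j)$ is $\Ap(t)$. To handle arbitrary coefficient fields and the passage from minor ideals to invariant factors: applying $\iota\colon\Z\to\kappa$ is a base change, and elementary/Fitting ideals commute with base change (right exactness of $-\otimes_{\Z}\kappa$), so $M(F_r,\mathcal{R}_j)_{\Fp}$ presents $\mathcal{A}\otimes_{\Z}\Fp$ and its minor ideals are the images under $\iota$ of those of $M(F_r,\mathcal{R}_j)$; over the PID $\Fp[t,t^{-1}]$ one may then put $M(F_r,\mathcal{R}_j)_{\Fp}$ in Smith normal form and read off the invariant factors, the largest nonzero one being $\Apf(t)$. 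For $p=0$ this recovers the classical $\Ap(t)$, and comparing the two computations gives $(\Apf(t))=(\Ap(t)\bmod p)$ for free — the compatibility announced for \S\,3.

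The step I expect to be the main obstacle is the fundamental identity $\partial_{2}=[q^{*}D_i(\mathcal{R}_j)]$: one must set up the lifts of the $2$-cells carefully and keep the $\Lambda$-coefficients bookkept with consistent sign and exponent conventions, and then be scrupulous about the index shift between $\mathcal{A}$ and the genuine Alexander module so that the minor size ``$(r-i)\times(r-i)$'' comes out exactly right rather than off by one. Everything downstream — Fitting's lemma, reduction modulo $p$, commutation of elementary ideals with base change, and Smith normal form over $\Fp[t,t^{-1}]$ — is routine.
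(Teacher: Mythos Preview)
The paper does not supply a proof of this theorem: it is stated with attribution to Fox and then used as a black box, the subsequent corollaries merely specializing it to the Wirtinger presentation and its reduction modulo $p$. Your outline is the standard modern argument---identify the Fox Jacobian with $\partial_2$ in the cellular $\Lambda$-chain complex of the maximal abelian cover of the presentation $2$-complex, split off the free summand $(t-1)\Lambda\cong\Lambda$ from $\mathrm{coker}(\partial_2)$ to recover the Alexander module, and invoke invariance of Fitting ideals---and it is correct. The index shift you flag is precisely what accounts for the minor size being $(r-i)\times(r-i)$ rather than $(r-i-1)\times(r-i-1)$. Your closing remarks on base change of Fitting ideals along $\iota:\Z\to\Fp$ and Smith normal form over $\Fp[t,t^{-1}]$ in fact anticipate and give a cleaner proof of the paper's Theorem~\ref{thm:2.7} than the one presented there.
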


Using the presentation $(1)$ for $\Ga$ from $\S\ref{sec:deRham}$, with $\Ga^\text{ab}\cong \langle t \rangle$ we have $q^*D_i(\mathcal{R}_k)$
\begin{eqnarray*}
q^*D_i(\mathcal{R}_i) 		&=& 1 \text{ or } -t,\\
q^*D_{i+1}(\mathcal{R}_i)	&=& -t \text{ or } 1,\\
q^*D_{j(i)}(\mathcal{R}_i)  &=& t-1, \\
q^*D_{k}(\mathcal{R}_i)		&=& 0 \text{     otherwise }
\end{eqnarray*} 

Recall the presentation matrix $A(t)$, as seen in $\S\ref{sec:deRham}$ is defined over $\Z[t,t^{-1}]$. The following corollary summarize the equivalence of Fox's Jacobian and de Rham's matrix $A(t)$.  

\begin{corollary}[de Rham \cite{deR1}] \label{cor:2.1.1}
For the knot group $\Ga$ with presentation $(\ref{eq:wirt})$, $M(F_r,\mathcal{R}_j) = A(t)$.
\end{corollary}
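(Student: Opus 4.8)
The plan is to verify the equality entry-by-entry, by comparing the rows of Fox's Jacobian $M(F_r,\mathcal{R}_j)$ computed from the Wirtinger relators with the rows of de Rham's presentation matrix $A(t)$ obtained from the affine equations $(+)$ and $(-)$. Since both matrices are $n \times n$ and indexed by the $n$ crossings (equivalently, the $n$ relators $\mathcal{R}_i$) against the $n$ generators $x_1,\dots,x_n$, it suffices to show that row $i$ agrees in both matrices for each of the two crossing types. Recall that the $i$th Wirtinger relator is either $\mathcal{R}_i = x_i x_{j(i)} x_{i+1}^{-1} x_{j(i)}^{-1}$ (case $+$) or $\mathcal{R}_i = x_{i+1} x_{j(i)} x_i^{-1} x_{j(i)}^{-1}$ (case $-$).

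First I would carry out the Fox derivative computation on $\mathcal{R}_i$ in case $(+)$ using the product rule $D_k(uv) = D_k(u) + u D_k(v)$ together with $D_k(x_m) = \delta_{km}$ and the standard consequence $D_k(w^{-1}) = -w^{-1} D_k(w)$. Applying this to $\mathcal{R}_i = x_i x_{j(i)} x_{i+1}^{-1} x_{j(i)}^{-1}$ and then pushing forward along $q^*$ to $\Z[\Ga^{\mathrm{ab}}] \cong \Z[t,t^{-1}]$ (where every generator maps to $t$), one gets $q^* D_i(\mathcal{R}_i) = 1$, $q^* D_{i+1}(\mathcal{R}_i) = -t$, and $q^* D_{j(i)}(\mathcal{R}_i) = t - 1$, with all other partials zero — exactly the row $(t-1)y_{j(i)} + y_i - t y_{i+1} = 0$ appearing in de Rham's system after the substitution $t_k = t$. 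The analogous computation for case $(-)$ with $\mathcal{R}_i = x_{i+1} x_{j(i)} x_i^{-1} x_{j(i)}^{-1}$ yields $q^* D_{i+1}(\mathcal{R}_i) = 1$, $q^* D_i(\mathcal{R}_i) = -t$, $q^* D_{j(i)}(\mathcal{R}_i) = t-1$, matching the relation $(t-1)y_{j(i)} - t y_i + y_{i+1} = 0$. These are precisely the formulas for $q^* D_k(\mathcal{R}_i)$ listed just before the corollary, so the two matrices have identical entries.

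The main (really the only) subtlety is bookkeeping: one must be careful about the precise form of each Wirtinger relator (orientation conventions in Figure 1 determine whether $x_{j(i)}$ appears on the left or right, and whether the relation reads $x_i x_{j(i)} = x_{j(i)} x_{i+1}$ or the reversed version), and one must apply the sign rule for inverses consistently so that the $t-1$ term lands with the correct sign. I would also note explicitly that de Rham's derivation in $\S\ref{sec:deRham}$ forces $t_i = t_{i+1}$ for all $i$, so that $A(t)$ genuinely is a matrix over $\Z[t,t^{-1}]$ in a single variable, which is what makes the comparison with $q^* D_i(\mathcal{R}_j) \in \Z[\Ga^{\mathrm{ab}}] = \Z[t,t^{-1}]$ literal rather than merely formal. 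Given the entry-wise match, $M(F_r,\mathcal{R}_j) = A(t)$ follows immediately, and hence (combining with Fox's theorem) $\Ap(t)$ computed via de Rham's matrix agrees with the classical Alexander polynomial.
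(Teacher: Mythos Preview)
Your proposal is correct and follows exactly the approach the paper takes: the paper does not give a separate proof of this corollary but simply records the values $q^*D_i(\mathcal{R}_i)$, $q^*D_{i+1}(\mathcal{R}_i)$, $q^*D_{j(i)}(\mathcal{R}_i)$ immediately before the statement, and your argument is precisely the detailed row-by-row verification of those formulas against the coefficients in de Rham's equations $(+)$ and $(-)$.
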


Furthermore the generalization of Fox's Jacobian and de Rham's presentation matrix $A_{\Fp}(t)$ to the finite field $\Fp$ yields a similar result.
\begin{corollary} \label{cor:2.1.2}
For the knot group $\Ga$ with presentation $(\ref{eq:wirt})$, $M(F_r,\mathcal{R}_j)_{\Fp}=A_{\Fp}(t)$.
\end{corollary}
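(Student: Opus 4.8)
The plan is to obtain this corollary from Corollary~\ref{cor:2.1.1} by a naturality (base-change) argument: both $M(F_r,\mathcal{R}_j)_{\Fp}$ and $A_{\Fp}(t)$ are produced from their characteristic-zero counterparts by applying, entrywise, one and the same coefficient-reduction map, and that map is a ring homomorphism, so it preserves the matrix identity $M(F_r,\mathcal{R}_j)=A(t)$.

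First I would fix the reduction maps. The canonical homomorphism $\iota:\Z\to\Fp$ extends uniquely to a ring homomorphism $\Z[F_r]\to\Fp[F_r]$ fixing every group element, and this descends under abelianization to $\Z[\Ga^{\text{ab}}]\to\Fp[\Ga^{\text{ab}}]$; via $\Ga^{\text{ab}}\cong\langle t\rangle$ the latter is the coefficientwise reduction $\Z[t,t^{-1}]\to\Fp[t,t^{-1}]$. By the definitions recalled in $\S\ref{sec:prelim}$ and $\S\ref{ref:FoxCalc}$, $A_{\Fp}(t)$ is the entrywise $\iota$-image of $A(t)$ and $M(F_r,\mathcal{R}_j)_{\Fp}$ is the entrywise $\iota$-image of $M(F_r,\mathcal{R}_j)$. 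It then remains only to see that $\iota$ commutes with forming the Jacobian. This is immediate from the defining rules $D_i(x_j)=\delta_{ij}$ and $D_i(uv)=D_i(u)+uD_i(v)$: these involve only $0$, $\pm 1$ and the ring operations, all respected by $\iota$, so computing $q^*D_i(\mathcal{R}_j)$ over $\Z$ and then reducing mod $p$ gives the same result as carrying out Fox calculus with coefficients in $\Fp$. Applying $\iota$ entrywise to the identity of Corollary~\ref{cor:2.1.1} now yields $M(F_r,\mathcal{R}_j)_{\Fp}=A_{\Fp}(t)$.

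A more hands-on variant, which I could include instead, uses that the excerpt already records every nonzero partial: for the Wirtinger presentation $(\ref{eq:wirt})$ one has $q^*D_i(\mathcal{R}_i)\in\{1,-t\}$, $q^*D_{i+1}(\mathcal{R}_i)\in\{-t,1\}$, $q^*D_{j(i)}(\mathcal{R}_i)=t-1$, and all remaining partials are $0$. Each of these Laurent polynomials has coefficients in $\{0,\pm1\}$, which $\iota$ carries into $\{0,\pm1\}\subset\Fp$; comparing with the rows of $A_{\Fp}(t)$ coming from the simplified $(+)$ and $(-)$ equations of $\S\ref{sec:deRham}$ shows the two matrices coincide entry by entry, with matching slots $i$, $i+1$, $j(i)$ for relator $i$. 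I do not anticipate a genuine obstacle: the only thing to be careful about is the bookkeeping of which entry sits in which position — and that is precisely what Corollary~\ref{cor:2.1.1} already supplies — so nothing new happens in positive characteristic beyond substituting the reduction into expressions whose coefficients were already $0$ or $\pm1$.
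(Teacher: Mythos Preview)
Your proposal is correct. The paper states this corollary without proof, merely remarking that the generalization to $\Fp$ ``yields a similar result''; your base-change argument (reducing the identity of Corollary~\ref{cor:2.1.1} entrywise via $\iota$) is precisely the intended justification, and your explicit entry-by-entry check is just a concrete unpacking of the same idea.
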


\begin{theorem} \label{thm:2.7}
$\Apfi(t) \equiv \Ap_i(t) \pmod{p}$
\end{theorem}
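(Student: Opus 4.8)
The plan is to show that reduction of coefficients modulo $p$ commutes with the formation of minors, so that the entire chain of Alexander ideals of $A(t)$ is carried by $\iota$ onto the corresponding chain of $A_{\Fp}(t)$; the statement about individual invariant factors then follows by reading off generators. First I would fix notation: by Corollary \ref{cor:2.1.2} the matrix $A_{\Fp}(t)$ is obtained from $A(t)$ by applying the coefficient map $\iota:\Z\to\Fp$ entrywise, and I also write $\iota:\Z[t,t^{-1}]\to\Fp[t,t^{-1}]$ for the induced surjective ring homomorphism (with $t\mapsto t$). Since both matrices are built from the same presentation $(\ref{eq:wirt})$ of $\Ga$, they have the same size, so for each $k$ the index sets labelling the $k\times k$ minors coincide.

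The key step is the elementary observation that a $k\times k$ minor is a fixed polynomial with integer coefficients in the matrix entries (the Leibniz expansion of the determinant), hence is preserved by any ring homomorphism. Consequently, for every $k$, the set of $k\times k$ minors of $A_{\Fp}(t)$ is exactly the set of $\iota$-images of the $k\times k$ minors of $A(t)$. Because the $i$th Alexander ideal is by definition generated by the $(r-i)\times(r-i)$ minors, this gives the ideal identity
\begin{equation*}
E_i\bigl(A_{\Fp}(t)\bigr)=\iota\bigl(E_i(A(t))\bigr)\cdot\Fp[t,t^{-1}],
\end{equation*}
i.e. the $i$th Alexander ideal over $\Fp[t,t^{-1}]$ is the ideal generated by the $\iota$-images of a generating set of $E_i(A(t))$.

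It then remains to translate this ideal identity into the asserted congruence of invariant factors. By the structure recalled in \S\ref{sec:prelim} we may write $E_i(A(t))=(\Ap_i(t))$ in $\Z[t,t^{-1}]$ (for $i=0$ the ideal is generated by the single top-order minor, i.e. a determinant of the square reduced Alexander matrix; for $i\ge 1$ one uses the classical principality of the elementary ideals of a knot group invoked there), so $\iota(E_i(A(t)))=\bigl(\Ap_i(t)\bmod p\bigr)$. On the other hand $\Fp[t,t^{-1}]$ is a principal ideal domain (a localization of the Euclidean ring $\Fp[t]$), and $\Apfi(t)$ is by definition a generator of $E_i(A_{\Fp}(t))$. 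Comparing two generators of the same principal ideal of $\Fp[t,t^{-1}]$, we conclude that $\Apfi(t)$ and $\Ap_i(t)\bmod p$ differ by a unit of $\Fp[t,t^{-1}]$, namely a factor $\pm t^{m}$; under the standing convention that these Laurent polynomials are defined only up to units, this is precisely $\Apfi(t)\equiv\Ap_i(t)\pmod p$.

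I expect the only genuine obstacle to be the bookkeeping around principality and the unit ambiguity: one must check that $\Ap_i(t)$ really \emph{generates} (not merely divides the gcd of) the $i$th Alexander ideal over $\Z[t,t^{-1}]$ — for $i=0$, which is the case relevant to $\Ap(t)=\Apf(t)$, this is immediate since the ideal is generated by one determinant, and for higher $i$ it is the classical fact already used in \S\ref{sec:prelim} — and that the comparison of generators is only up to $\pm t^{m}$, which is harmless given the conventions in force throughout the paper.
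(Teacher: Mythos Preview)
Your proposal is correct and follows essentially the same route as the paper's own proof: both arguments rest on the single observation that minors, being polynomials with integer coefficients in the matrix entries, commute with the coefficient homomorphism $\iota$, so that each Alexander ideal over $\Z[t,t^{-1}]$ is carried onto the corresponding ideal over $\Fp[t,t^{-1}]$, and hence their (assumed principal) generators agree up to a unit. Your write-up is in fact a bit more careful than the paper's about the unit ambiguity; the only small slip is the aside that for $i=0$ ``the ideal is generated by one determinant''---the balanced Wirtinger matrix has rank $r-1$, so even the top Alexander ideal is generated by the several $(r-1)\times(r-1)$ minors---but this does not affect your argument, since its principality is exactly what is asserted in \S\ref{sec:prelim}.
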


\begin{proof}
Let the knot group $\Ga=\langle F_r | \{\mathcal{R}_j\}\rangle$. Since $\Ap_i(t)$ is a principal generator for the ideal generated by the $(r-i) \times (r-i)$ minors of $M(F_r,\mathcal{R}_j)$ let $(f_1,\ldots,f_k)=(\Ap_i(t))$ for $f_j \in \Z[t,t^{-1}]$, i.e the $f_j$ are the $(r-i) \times (r-i)$ minors of $M(F_r,\mathcal{R}_j)$. Let $(q_1,\ldots,q_k) = (\Apfi(t))$, where the $q_j \in \Fp[t,t^{-1}]$ are the $(r-i) \times (r-i)$ minors of $M(F_r,\mathcal{R}_j)_{\Fp}$, lastly we have $\iota:Z \rightarrow \Fp$ is the canonical ring homomorphism. Corollaries~\ref{cor:2.1.1} and~\ref{cor:2.1.2} give us that evaluating $\iota$ at the coefficients of the entries of $M(F_r,\mathcal{R}_j)$ is the matrix $A_{\Fp}(t)$. Furthermore denote $\iota(f)$ for $f \in \Z[t,t^{-1}]$, the image of $f$ after evaluating $\iota$ on the coefficients of $f$. Similarly if $S$ is a matrix over $\Z[t,t^{-1}]$ then $\iota(S)$ denotes the matrix with entries in $\Fp[t,t^{-1}]$, having evaluated $f$ at the coefficients of the entries of $S$. Each $f_j$ comes from a determinant of a $(r-i) \times (r-i)$ sub-matrix $S_j$, and since $\iota$ is a homomorphism we have that
\begin{equation*} 
\iota(f_j)=\iota(\text{Det}(S_j))=\text{Det}(\iota(S_j))=q_j.
\end{equation*}
 Therefore the image of $(\Ap_i(t))=  (f_1,\ldots,f_k)$ under $\iota$ is $(q_1,\ldots,q_k)=(\Apfi(t))$, the image of $\Ap_i(t)$ under $\iota$ is $\Ap_i(t) \pmod{p}$. Hence we conclude that $\Apfi(t) = \Ap_i(t) \pmod{p}$.
\end{proof}

We recall for the reader that the image of $\Ral$ the group $\Gal$ constructed in $\S\ref{sec:deRham}$, as stated $\Gal \cong \langle \alpha \rangle \ltimes \Fq$ and this semidirect product is defined via multiplication by $\alpha$. 

\begin{theorem2} \label{thm:2}
There exists a representation $\Ral:\Ga \rightarrow \Gal$ if and only if $\alpha$ is a non-zero root of $\Ap(t) \pmod{p}$ for $p$ a prime, with $d=[\Fp(\alpha):\Fp]$ and $\mathrm{ord}_{\Fq^*}(\alpha)=n$.
\end{theorem2}

\begin{proof}
It follows from Theorem~\ref{thm:2} that if $\alpha$ is a root of $\Ap(t) \pmod{p}$ in the extension $\Fp(\alpha)\cong\Fq$, for $d=[\Fp(\alpha):\Fp]$, then $\alpha$ is also a root of $\Apf(t)$. Hence by Proposition~\ref{prop:2.3} such a representation $\Ral:\Ga \rightarrow \Gal$ exists if and only if $\alpha$ is a non-zero root of $\Ap(t) \pmod{p}$.
\end{proof}

\section{Proof of Theorem 1}
\label{sec:Theorem1}
In this section we provide conditions depending only on the crossing number $c_K$ on the size of the smallest prime $p$ so that $\Apf(t)$ is a non-trivial polynomial when $\Ap(t)$ is non-trivial. In particular this allows us to provide an upper bound on the index to ensure that such a cover corresponding to $\text{ker}(\Ral)$ exists. In other words we will provide a bound on the prime $p$ so that a representation $\Ral$ of $\Ga$ onto $\Gal$ exists. 

For such a representation $\Ral$ to exist, $\Apf(t)$ must be a non-constant Alexander polynomial, so that there are non-zero roots in some extension of $\Fp$. In this section we find a bound on the smallest prime in terms of the crossing number $c_K$, for which this holds. Consider the matrix $A(t) \in \mathrm{Mat}_n\left(\Z[t,t^{-1}]\right)$ in $\S\ref{sec:deRham}$, and notice that it satisfies the following criteria;

\begin{itemize}
\item[1)] The entries are in the set $\{0, 1, t, t-1\}$.
\item[2)] In each row the entries $1, t , t-1$ occur at most once, if at all. 
\item[3)] No row is the zero vector.
\end{itemize}

\begin{lemma} \label{lem:2.3}
Suppose $A(t) \in \mathrm{Mat}_n\Z[t,t^{-1}]$ is an $n \times n$, for an $n \geq 1$, is a matrix satisfying criteria 1), 2), and 3), then for any coefficient, $a$, of the determinant of $A(t)$ we have $|a| \leq 4^{n-1}$.  
\end{lemma}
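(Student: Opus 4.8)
The plan is to induct on $n$, expanding the determinant along a row and controlling how the coefficient bounds accumulate. For $n=1$ the matrix is a single nonzero entry from $\{1,t,t-1\}$ (it cannot be $0$ by criterion 3), so every coefficient has absolute value at most $1 = 4^0$, establishing the base case. For the inductive step, fix a row — I would use the Laplace expansion along the first row $\sum_j (-1)^{1+j} A(t)_{1j}\,M_{1j}(t)$, where $M_{1j}(t)$ is the $(1,j)$ minor. The key structural observation is that each $(n-1)\times(n-1)$ submatrix obtained by deleting row $1$ and column $j$ still satisfies criteria 1), 2), and 3): deleting a row and column cannot introduce a nonzero entry, cannot create a repeated occurrence of $1$, $t$, or $t-1$ in a row, and — this is the point that needs a sentence of care — cannot create a zero row, because in $A(t)$ the off-diagonal nonzero entries in a given row are $t-1$ and one of $\{1,t\}$ sitting in distinct columns while the diagonal carries the remaining one of $\{1,t\}$; removing a single column kills at most one of the (at most three, but really the Wirtinger rows have exactly the pattern from the relations) nonzero entries of that row. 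Actually, to be safe I would argue directly from the explicit form of $A(t)$ in $\S\ref{sec:deRham}$: each row of $A(t)$ has exactly the nonzero entries $\{t-1, 1, -t\}$ (or $\{t-1,-t,1\}$) in three distinct columns, so deleting one column leaves at least two nonzero entries in that row, hence criterion 3) persists.

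Granting that, by the inductive hypothesis every coefficient of each minor $M_{1j}(t)$ has absolute value at most $4^{n-2}$. By criterion 2), the first row has at most three nonzero entries, each of which is a polynomial ($0$, $1$, $t$, or $t-1$) with coefficients bounded by $1$ and with at most two monomials. Multiplying a minor by such an entry at most doubles the coefficient bound (the worst case is multiplication by $t-1$, which sends a coefficient $a_k$ of the minor to $a_{k-1}-a_k$ in the product, of absolute value at most $2\cdot 4^{n-2}$), and then summing the (at most three) resulting terms multiplies the bound by at most $3$. This would give a bound of $6\cdot 4^{n-2} = \tfrac{3}{2}\cdot 4^{n-1}$, which is too weak by a factor of $\tfrac32$. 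To get the sharp $4^{n-1}$ I would instead expand along a row with the fewest nonzero entries, or, better, track the bound through a single fixed column of the Laplace expansion more carefully: the three nonzero entries of a Wirtinger row sum (with signs, after abelianization) to $0$, i.e. $(t-1) + 1 + (-t) = 0$, so the three cofactor terms are not independent and a more careful accounting — grouping the $1\cdot M$ and $-t\cdot M'$ terms against the $(t-1)\cdot M''$ term — should absorb the stray factor.

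The main obstacle I anticipate is exactly this constant-chasing: a naive triangle-inequality bound in the Laplace expansion loses a multiplicative constant at each level and degrades to something like $6^{n-1}$ or $(3/2)4^{n-1}$ rather than $4^{n-1}$. The resolution is to exploit the very rigid structure of $A(t)$ — three nonzero entries per row, lying in the set $\{1,-t,t-1\}$ and summing to zero — rather than treating it as a generic sparse matrix. Concretely, I would write the determinant expansion so that for each row the contribution is $M_{\mathrm{col}(1)} - t\,M_{\mathrm{col}(-t)} + (t-1)M_{\mathrm{col}(t-1)}$ and bound this as $|{-}t\,(M_{\mathrm{col}(-t)}-M_{\mathrm{col}(t-1)})| + |M_{\mathrm{col}(1)} - M_{\mathrm{col}(t-1)}|$ after regrouping $(t-1) = -1 + t$ appropriately; each regrouped piece is a difference of two minors multiplied by a monomial, contributing at most $2\cdot 4^{n-2} + 2\cdot 4^{n-2} = 4^{n-1}$. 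If that regrouping does not close the gap cleanly I would fall back on a direct combinatorial estimate: a coefficient of $\det A(t)$ counts (signed) certain families of permutations weighted by products of the monomials $\{1,t,t-1\}$, and since each factor $t-1$ contributes at most $\binom{m}{j}$-type binomial weight across $m \le n$ such factors while all other factors are monomials, one bounds $|a| \le \sum_j \binom{n-1}{j} \le 2^{n-1}$ in the extreme monomial-free-of-$(t-1)$ case and $4^{n-1}$ once the $t$ versus $t-1$ choices are accounted for — a cleaner route that sidesteps the recursion entirely.
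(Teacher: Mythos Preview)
Your approach --- induction on $n$ with a Laplace expansion along a row --- is exactly the paper's approach. The problem is purely in your bookkeeping at the inductive step, and once that is fixed the argument closes without any of the regrouping or combinatorial detours you propose.

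You wrote that multiplying a minor by an entry ``at most doubles the coefficient bound (the worst case is multiplication by $t-1$)'' and then that ``summing the (at most three) resulting terms multiplies the bound by at most $3$'', arriving at $6\cdot 4^{n-2}$. But criterion 2) says each of $1$, $t$, $t-1$ occurs \emph{at most once} in the row. So among the (at most three) nonzero terms in the expansion along that row, at most one is of the form $(t-1)\cdot M$; the others are $1\cdot M'$ and $\pm t\cdot M''$, which are monomial multiples and do not double the coefficient bound. A coefficient of the sum is therefore bounded by
\[
\underbrace{4^{n-2}}_{\text{from }1\cdot M'}+\underbrace{4^{n-2}}_{\text{from }t\cdot M''}+\underbrace{2\cdot 4^{n-2}}_{\text{from }(t-1)\cdot M}=4\cdot 4^{n-2}=4^{n-1},
\]
which is the claimed bound on the nose. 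This is precisely the computation the paper carries out (it writes the four summands as $4^{n-1}+4^{n-1}+4^{n-1}+4^{n-1}=4^n$ in the passage from size $n$ to size $n+1$, which is the same thing with indices shifted by one). No use of the relation $1+(-t)+(t-1)=0$ or of the specific Wirtinger structure is needed.

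On your worry about criterion 3): it is a red herring for this induction. If deleting a column produces a minor with a zero row, that minor has determinant $0$, and the bound $|a|\le 4^{n-2}$ holds trivially for all of its coefficients. So you may simply drop criterion 3) from the inductive hypothesis (or, equivalently, note that the bound is vacuous for zero determinants) rather than arguing that it persists. The paper glosses over this point by asserting the minors satisfy 1), 2), 3), but the induction only actually uses 1) and 2).
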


\begin{proof}
We proceed by induction on the size of the matrix $A(t)$. In the bases case $n=1$, the largest coefficient $4^0=1$. As an induction hypothesis, suppose for all $k$ with $n \geq k \geq 1$ that for any matrix $A(t)$ satisfying criteria 1), 2), and 3,) a coefficient $a$ of the determinant of $A(t)$ must satisfy $|a| \leq 4^{k-1}$. Now Consider the case $k=n+1$. Denote by $B_{k-1}(t)$, $C_{k-1}(t)$, and $D_{k-1}(t)$ the $k-1 \times k-1$ cofactor corresponding to $1,-t$ and $(t-1)$ along the first row of $A(t)$, respectively. Then we have
\begin{equation*}
\mathrm{det}(A(t))=\pm \mathrm{det}(B_{k-1}(t)) \pm t \mathrm{det}(C_{k-1}(t)) \pm (t-1)\mathrm{det}(D_{k-1}(t)).
\end{equation*}
Since $B_{k-1}(t)$, $C_{k-1}(t)$, and $D_{k-1}(t)$ all satisfy criteria 1), 2), and 3), it follows by the induction hypothesis that if $b$ is any coefficient of $\mathrm{det}(B_{k-1}(t))$, $c$ is any coefficient of $\mathrm{det}(C_{k-1}(t))$, and $d$ is any coefficient of $\mathrm{det}(D_{k-1}(t))$, that $|b| \leq 4^{n-1}$, $|c| \leq 4^{n-1}$, and $|d| \leq 4^{n-1}$. Let $a$ be any coefficient of $\mathrm{det}(A(t))$, it follows from the above equation that
\begin{equation*}
|a| \leq 4^{n-1}+4^{n-1}+4^{n-1}+4^{n} =4^{n}.
\end{equation*}
The lemma follows.
\end{proof}

\begin{lemma} \label{lem:3.2}
If $p \geq 4^{c_K-2}$ and $\Ap(t)$ is non-trivial, then $\Ap(t)$ is non-trivial in $\Fp[t]$.  
\end{lemma}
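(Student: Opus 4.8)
The plan is to bound the integer coefficients of $\Ap(t)$ via Lemma~\ref{lem:2.3}, and then argue that if $\Ap(t)$ became trivial modulo $p$, too many of its coefficients would be forced to be nonzero multiples of $p$. For the bound, I would first realise $\Ap(t)$ as a single determinant of a matrix satisfying criteria 1)--3). Take a minimal diagram of $K$, with $c_K$ crossings; the de Rham matrix $A(t)\in\mathrm{Mat}_{c_K}(\Z[t,t^{-1}])$ of $\S\ref{sec:deRham}$ satisfies criteria 1)--3) (as noted before Lemma~\ref{lem:2.3}) and, by Corollaries~\ref{cor:2.1.1} and~\ref{cor:2.1.2}, is the Fox Jacobian of a Wirtinger presentation. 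It is classical (see \cite{Rolf1}: one Wirtinger relator is redundant, and by the Fox identity each row of the Jacobian has entries summing to zero, so the maximal minors of the reduced matrix are associate and generate the Alexander ideal) that $\Ap(t)$ equals, up to a unit $\pm t^{k}$ of $\Z[t,t^{-1}]$, the determinant of a $(c_K-1)\times(c_K-1)$ submatrix $A'(t)$ of $A(t)$ obtained by striking one row and one column. Since each relator row of $A(t)$ has three \emph{distinct} nonzero entries — at the two under-arcs and the over-arc of its crossing, which are pairwise distinct in a reduced diagram — deleting one column leaves at least two nonzero entries in every row, so $A'(t)$ still satisfies criteria 1)--3). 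Applying Lemma~\ref{lem:2.3} to the $(c_K-1)\times(c_K-1)$ matrix $A'(t)$, every coefficient $a$ of $\det A'(t)$, hence of $\Ap(t)$, satisfies $|a|\le 4^{(c_K-1)-1}=4^{c_K-2}$.

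Next I would run the divisibility count. Suppose toward a contradiction that $\Ap(t)$ is trivial in $\Fp[t]$. By Theorem~\ref{thm:2.7} the reduction of $\Ap(t)$ modulo $p$ equals $\Apf(t)$ up to a unit of $\Fp[t,t^{-1}]$, so triviality means $\Ap(t)\equiv c\,t^{m}\pmod p$ for some $m$ and some $c\in\Fp^{*}$; equivalently, writing $\Ap(t)=\sum_i a_i t^i$, every integer coefficient $a_i$ with $i\ne m$ is divisible by $p$. By Lemma~\ref{lem:2.2}, $\Ap(t)$ has at least three nonzero integer coefficients, at most one of which is the coefficient of $t^m$; hence there is an index $i\ne m$ with $a_i\ne 0$ and $p\mid a_i$, so $|a_i|\ge p\ge 4^{c_K-2}$. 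Combined with the bound of the previous paragraph this forces $|a_i|=p=4^{c_K-2}$. But $\Ap(t)$ non-trivial forces $c_K\ge 3$ (by Lemma~\ref{lem:2.2} together with Proposition~\ref{prop:rofp}(3)), so $4^{c_K-2}$ is a power of $4$ that is at least $4$, hence composite, contradicting the primality of $p$. Therefore no such $m$ exists, i.e.\ $\Ap(t)$ is non-trivial in $\Fp[t]$.

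The arithmetic of the second paragraph is routine; the part that needs care is the first, where one must realise $\Ap(t)$ — up to a unit of $\Z[t,t^{-1}]$ — as a \emph{single} determinant of a matrix meeting the hypotheses of Lemma~\ref{lem:2.3}, rather than merely as a divisor of such a determinant, since the coefficients of a divisor need not be controlled by those of the polynomial. This is exactly where the classical structure of the Wirtinger Alexander matrix (redundant relator; vanishing linear combination of columns) is used; the only additional bookkeeping is the observation that the submatrix $A'(t)$ inherits criteria 1)--3), in particular never acquires a zero row.
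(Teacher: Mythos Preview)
Your proof is correct but takes a different route in the first step. The paper does \emph{not} realise $\Ap(t)$ as a single minor; it only uses that $\Ap(t)$, being the largest invariant factor of $A(t)$, \emph{divides} any nonzero $(c_K-1)\times(c_K-1)$ minor $\det S(t)$. Since the extreme coefficients of a product of Laurent polynomials are the products of the extreme coefficients, the leading and trailing coefficients $a_0,a_d$ of $\Ap(t)$ divide those of $\det S(t)$, and Lemma~\ref{lem:2.3} applied to $S(t)$ bounds the latter by $4^{c_K-2}$. So the paper controls only $|a_0|,|a_d|\le 4^{c_K-2}$, which already forces both to survive modulo $p$ and hence gives at least two nonzero terms in $\Apf(t)$. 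Your approach instead invokes the stronger Wirtinger fact that every such minor \emph{equals} $\Ap(t)$ up to a unit, thereby bounding all coefficients at once; this buys you a cleaner endgame via Lemma~\ref{lem:2.2}, but costs the extra structural input (Fox column identity plus redundancy of one relator). Your concern that ``coefficients of a divisor need not be controlled'' is exactly the obstacle the paper sidesteps by restricting attention to the extreme coefficients, where divisibility does transfer the bound. Two small remarks: your discussion of three distinct arcs in a reduced diagram is unnecessary, since any submatrix with nonzero determinant automatically has no zero row, so criterion 3) comes for free; and you make explicit the boundary case $p=4^{c_K-2}$ (composite for $c_K\ge 3$), which the paper passes over silently.
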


\begin{proof}

The first observation is that the presentation for $\Ga$ in $(\ref{eq:wirt})$ has $c_K$ generators since, in the Wirtinger presentation there is exactly one generator for each crossing. Furthermore, $(\ref{eq:wirt})$ can be simplified to have $c_K-1$ generators. This is due to the fact that $x_{j(c_K)}x_{c_K}x_{j(c_K)}^{-1}=x_1$ or $x_{j(c_K)}^{-1}x_{c_K}x_{j(c_K)}=x_1$ in particular $x_{j(c_K)} \neq x_{c_K}$. Thus we may drop the last generator, so that there are at most $c_K-1$ generators.
Since $\Ap(t)$ is the largest invariant factor of the matrix $A(t)$, by Lemma $\ref{lem:2.2}$ there are at least three non-zero coefficients of $\Ap(t)$. Furthermore any non-zero minor computed from $A(t)$ comes from a sub-matrix $S(t)$ that satisfies criteria 1), 2), and 3). We have
\begin{equation*}
\mathrm{det}(S(t))=\sum_{i=0}^ks_it^i, \ s_i \in \Z
\end{equation*}
and
\begin{equation*}
\Ap(t)=\sum_{i=0}^da_it^i
\end{equation*}
It follows that $s_0$ and $s_k$ are non-zero because, the leading and ending coefficients of $\Ap(t)$ are non-zero and, since $\Ap(t)$ is the largest in variant factor of $A(t)$, it must divide any maximal rank non-zero minor coming from $A(t)$. Hence the absolute values of the coefficients satisfy $a_0 \mid s_0$ and $a_d \mid s_k$. Further, by Lemma $\ref{lem:2.3}$, $|s_0| \leq 4^{c_K-2}$ and $|s_k| \leq 4^{c_K-2}$, so we have $|a_0| \leq 4^{c_K-2}$ and $|a_d| \leq 4^{c_K-2}$. If $p \geq 4^{c_K-2}$, then $a_d \neq 0 \pmod p$ and $a_0 \neq 0 \pmod p$. It follows that $\Apf(t)$ is non-constant, and by Theorem $\ref{thm:2.7}$ $\Apf(t) \equiv \Ap(t) \pmod{p}$ is non-trivial.
\end{proof}

We are now ready to prove the main result of this paper:

\begin{theorem1}
If $K$ is a knot with non-trivial Alexander polynomial, then there exists a regular non-abelian cover $X_{\Ral}$ of $M_K$ with 
\begin{equation*}
\left[M_K : X_{\Ral}\right] \leq 4^{2c_K^2-c_K}, 
\end{equation*}
and there exists an irregular non-cyclic cover $Y_{\Ral}$ with
\begin{equation*}
\left[M_K : Y_{\Ral}\right] \leq 4^{c_K^2-2c_K}. 
\end{equation*}
\end{theorem1}

\begin{proof}
Let $\Ap(t)$ be the non-trivial Alexander polynomial for a knot $K$. Let $p$ be a prime such that  $4^{c_K-1} \leq p \leq 2\cdot4^{c_K-1}-2$ which exists by Bertrand's postulate. By Theorem $\ref{lem:3.2}$, it follows that $\Apf(t)$ is non-trivial, hence there exists a non-zero root $\alpha$ in some finite extension $\Fp(\alpha) \cong \Fq$. Furthermore by $\S\ref{sec:deRham}$ there exists a representation $\Ral: \Ga \rightarrow \Gal$. Let $X_{\Ral}$ be the connected covering space of $M_K$ corresponding to $\text{ker}(\Ral)$. The index $[M:X_{\Ral}]$ is $\text{ord}(\alpha)p^d$. We have the following;
\begin{eqnarray*}
[M:X_{\Ral}]&=&\text{ord}(\alpha)p^d,\\
			&\leq &(p^d-1)(p^d),\\
			&\leq &(p^{c_K-1}-1)(p^{c_K-1}),\\
			&\leq & ((2\cdot 4^{c_K-2}-2)^{c_K-1}-1)((2\cdot 4^{c_K-2}-2)^{c_K-1}).
\end{eqnarray*}
The above bound is optimal for this argument, and the theorem follows from a simplification of the above.

If we take the subgroup of $\Gal$ generated by $\alpha$ and construct the cover corresponding to $\Ral^{-1}\left(\langle \alpha \rangle \right)$, which has index
\begin{equation*}
\left[\Ga:\Ral^{-1}\left( \langle x \rangle \right)\right]=\left[\Gal:\langle \alpha \rangle \right] = p^d.
\end{equation*}
We obtain a new non-cyclic cover of $M_K$ which we denote $Y_{\Ral}$. A similar computation follows;
\begin{eqnarray*}
[M:Y_{\Ral}]&=& p^d,\\
			&\leq &(p^d),\\
			&\leq &(p^{c_K-1}),\\
			&\leq &(2\cdot 4^{c_K-2}-2)^{c_K-1}).
\end{eqnarray*}
Again the above is optimal and the theorem follows from a simplification.
\end{proof}

\subsection{Special Cases}
\label{sec:specialcases}

The bound found in Theorem $1$ is a worst possible case for an Alexander polynomial, there are many infinite families of knots for which the Alexander polynomial takes on a specific form. Similarly there are certain properties of Alexander polynomials of knots which allow us rephrase Theorem $1$ and simplify the bounds. 

\subsubsection*{Fibered Knots}

A knot $K$ is \emph{fibered} if the complement $M_K$ is a fiber bundle over the circle. In this case the fundamental group is of the form $\Ga=\langle t \rangle \ltimes \pi_1(\Sigma_g)$, where $\Sigma_g$ is the Seifert surface of $K$ arising from Seifert's algorithm. By \cite{Rolf1}, the Alexander polynomial must be monic, and it's degree is bounded about by $2g$.

\begin{corollary}
If $K$ is a fibered knot of genus $g$, with non-trivial Alexander polynomial, then for all primes $p$ the representation $\Ral:\Ga \rightarrow \Gal$ exists and
\begin{equation*}
[M_K : Y_{\Ral}] \leq 2^{2g}.
\end{equation*}
\end{corollary}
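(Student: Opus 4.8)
The plan is to exploit the fact that fibered knots have \emph{monic} Alexander polynomial, which forces $\Ap(t)$ to survive reduction modulo every prime without degenerating, and then to bound the degree of the relevant field extension by $\deg \Ap(t) \le 2g$.

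First I would normalize $\Ap(t)$ to an honest polynomial $\Ap(t)=\sum_{i=0}^{m}a_it^i$ with $a_0\neq 0$ and $m=\deg\Ap(t)\le 2g$. Since $K$ is fibered, $\Ap(t)$ is monic, so $a_m=\pm 1$; and by the symmetry of the Alexander polynomial (Proposition~\ref{prop:rofp}(2)) the constant term satisfies $a_0=\pm 1$ as well. Therefore, for \emph{any} prime $p$, the reduction $\Apf(t)\equiv\Ap(t)\pmod p$ (which is legitimate by Theorem~\ref{thm:2.7}) still has degree $m$ and non-zero constant term; in particular it is non-constant, it is not of the form $\pm t^n$, and $0$ is not one of its roots. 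Hence $\Apf(t)$ is non-trivial and has a non-zero root $\alpha$ in a finite extension of $\Fp$, so by Theorem~2 (equivalently by Proposition~\ref{prop:2.3}) the representation $\Ral:\Ga\rightarrow\Gal$ exists for every prime $p$.

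Next I would specialize to $p=2$. Let $\alpha$ be any non-zero root of $\Delta_2(t)$ and set $d=[\mathbb{F}_2(\alpha):\mathbb{F}_2]$. The minimal polynomial of $\alpha$ over $\mathbb{F}_2$ divides $\Delta_2(t)$, whose degree is $m\le 2g$, so $d\le 2g$. Following the construction of $Y_{\Ral}$ from the proof of Theorem~1 — the cover of $M_K$ corresponding to $\Ral^{-1}(\langle\alpha\rangle)$ — its index is
\begin{equation*}
[M_K:Y_{\Ral}]=[\Gal:\langle\alpha\rangle]=p^{d}=2^{d}\le 2^{2g}.
\end{equation*}

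The point that requires care — more a subtlety than a genuine obstacle — is verifying that $\Apf(t)$ does not collapse to a monomial or acquire $0$ as a root upon reduction mod $p$; both facts rest precisely on $\Ap(t)$ being monic together with the coefficient symmetry, which make its top and bottom coefficients units of $\Z$. Once this is established the corollary follows immediately, and the argument in fact yields the sharper statement that $\Ral$ exists for every prime, the choice $p=2$ producing the smallest such cover.
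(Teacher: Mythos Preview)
Your proof is correct and follows the same approach as the paper: monicity of the Alexander polynomial of a fibered knot ensures it remains non-trivial after reduction modulo any prime, and specializing to $p=2$ gives the bound. The paper's proof is a single sentence (``Since $\Ap(t)$ is monic it is non-trivial modulo $2$''), leaving implicit the symmetry argument for the constant term, the inequality $d\le\deg\Ap(t)\le 2g$, and the index computation $[M_K:Y_{\Ral}]=2^d$; you have simply made these steps explicit.
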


\begin{proof}
Since $\Ap(t)$ is monic it is non-trivial modulo $2$.
\end{proof}

\subsubsection*{A family of two bridge knots $J(k,l)$}

The twists knots which we denote $J(k,l)$ are a family of two bridge knots which have exactly two half-twist regions as seen in the figure below. Each region has $k$ and $l$ half twist in their respective regions, we make the assignment that a twist is positive if it a right hand twist and negative if it is a left hand twist. The twist knots are the special case $J(-2,2n)$.
\\
\begin{figure}[h]
\centering

\begin{tikzpicture}[scale=.5]
\draw (-5,4) arc (180:90:1cm);
\draw (-4,5) -- (-2,5);
\draw (-2,5) arc (90:0:1cm);
\draw (-5,4) -- (-5,-4);
\draw (-5,-4) arc (180:270:1cm);
\draw (-4,-5) -- (-1,-5);
\draw (-1,-5.1) -- (-1,-2.9);
\draw (-1,-2.9) -- (1,-2.9);
\draw (1,-2.9) -- (1,-5.1);
\draw (1,-5.1) -- (-1,-5.1);
\draw (1,-5) -- (4,-5);
\draw (4,-5) arc (270:360:1cm);
\draw (5,-4) -- (5,4);
\draw (5,4) arc (0:90:1cm);
\draw (4,5) -- (2,5);
\draw (2,5) arc (90:180:1cm);
\draw (-1.1,4) -- (1.1,4);
\draw (-1.1,4) -- (-1.1,2);
\draw (1.1,4)--(1.1,2);
\draw (-1.1,2)--(1.1,2);
\draw (-1,2) arc (360:270:1cm);
\draw (-2,1) -- (-3,1);
\draw (-3,1) arc (90:180:1cm);
\draw (-4,0) -- (-4,-2);
\draw (-4,-2) arc (180:270:1cm);
\draw (-3,-3)--(-1,-3);
\draw (1,-3)--(3,-3);
\draw (3,-3) arc (270:360:1cm);
\draw (4,-2)--(4,0);
\draw (4,0) arc (0:90:1cm);
\draw (3,1) -- (2,1);
\draw (2,1) arc (270:180:1cm);
\draw (0,-4) node {$l$};
\draw (0,3) node {$k$}; 
\end{tikzpicture}
\caption{The Knot $J(k,l)$}
\end{figure}
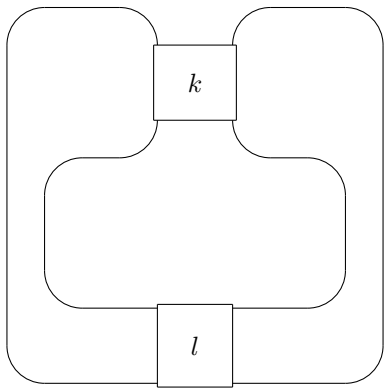

The $J(k,l)$ are knots only if $kl$ is even otherwise they are two component links. Furthermore every twist knot is isotopic to a twist knot with $l$ even.

\begin{lemma}[Lemma $7.3$ \cite{MPvL1}]\label{lem:3.4}
For all non-zero integers $k$ and $l=2n$ the knot $J(k,l)$ has Alexander polynomial 

\[
\Delta_{k,l}(t) =
\begin{cases}
nmt^2+(1-2mn)t+nm, & \text{if }k=2m \\
mt^{2n}+(1+2m)(-t^{2n-1}+\cdots-t)+m, & \text{if }k=2m+1 \text{ and } l>0\\
(m+1)t^{-2n}+(1+2m)(-t^{-2n-1}+\cdots-t)+m+1, & \text{if }k=2m+1 \text{ and } l<0\\
\end{cases}
\]

\end{lemma}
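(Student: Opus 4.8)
\emph{Proof idea.} The knots $J(k,l)$ are rational (two-bridge) knots assembled from two twist regions, so the plan is to pick a spanning surface or a two-bridge normal form adapted to those regions, read off $\Delta$ directly, and treat $k$ even and $k$ odd separately — the genus of the natural surface, and hence $\deg\Delta$, is $1$ when $k$ is even but grows linearly in $|l|$ when $k$ is odd. Any such computation determines $\Delta$ only up to a unit $\pm t^{j}$ of $\Z[t,t^{-1}]$, so at the end I would single out the representative in the statement using $\Delta(1)=\pm1$ together with the symmetry $a_{i}=a_{-i}$, i.e.\ parts (1) and (2) of Proposition~\ref{prop:rofp}; all three displayed polynomials visibly pass both tests.

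\emph{The case $k=2m$.} When both twist numbers are even, $J(2m,2n)$ bounds a genus-one Seifert surface, namely a disc with two bands, one winding $m$ times through the first twist box and the other $n$ times through the second, the two bands clasped once. In the evident basis of $H_{1}$ the Seifert form is, up to the signs dictated by Figure~2,
\begin{equation*}
V=\begin{pmatrix}-m & 1\\ 0 & -n\end{pmatrix},
\end{equation*}
and then $\Delta_{k,l}(t)\doteq\det(V-tV^{T})=mn(t-1)^{2}+t=mn\,t^{2}-(2mn-1)\,t+mn$, which is already normalized and equals the stated $nmt^{2}+(1-2mn)\,t+nm$. Since $k$ is even there is no $l$-sign asymmetry to track here.

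\emph{The case $k=2m+1$.} First take $l=2n>0$. Here I would pass to the two-bridge normal form: $J(2m+1,2n)$ is the rational knot $\mathfrak{b}(\alpha,\beta)$ with $\alpha=2n(2m+1)-1$ and $\beta=2m+1$. Then apply the classical two-bridge Alexander-polynomial formula — obtainable by Fox-differentiating the two-bridge relator exactly as in \S\ref{ref:FoxCalc} — namely $\Delta_{\mathfrak{b}(\alpha,\beta)}(t)\doteq\sum_{i=0}^{\alpha-1}(-1)^{i}t^{\sigma(i)}$ with $\sigma(i)=\sum_{j=1}^{i}(-1)^{\lfloor j\beta/\alpha\rfloor}$. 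With $\beta=2m+1$ the sign word $(-1)^{\lfloor j\beta/\alpha\rfloor}$ breaks into roughly $2m+1$ monotone blocks, so the partial sums $\sigma(i)$ make $2m+1$ monotone passes across the exponent interval (of width $2n$ after normalization), hitting each endpoint once and each interior exponent exactly $2m+1$ times with the alternating sign $(-1)^{i}$; collecting equal powers collapses the sum to $mt^{2n}+(1+2m)\sum_{i=1}^{2n-1}(-1)^{i}t^{i}+m$, as claimed. (Equivalently one can induct on $n$ through a Conway skein triangle relating $J(k,l)$ for consecutive values of $l$, with the $k$-even case or the trivial small cases as base.) For $l<0$, reflecting the whole diagram gives $J(2m+1,l)=J(-(2m+1),-l)^{\ast}$ with $-(2m+1)=2(-m-1)+1$, so the $l<0$ formula follows from the $l>0$ one under $(m,n)\mapsto(-m-1,-n)$ — the mirror-induced $t\mapsto t^{-1}$ being invisible by palindromy — and, after clearing signs, this is exactly $(m+1)t^{-2n}+(1+2m)(-t^{-2n-1}+\cdots-t)+(m+1)$.

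\emph{Where the work lies.} The genuinely delicate points are the sign conventions: making the entries of $V$ agree with Figure~2 in the even case, and carrying out the collapse of the two-bridge sum in the odd case — that is, proving precisely that $\sigma(i)$ revisits each interior exponent $2m+1$ times and each endpoint once; the skein-recursion route is equivalent and rests on the same arithmetic. Everything else — the $2\times2$ determinant, the mirror reduction, and matching normalizations via $\Delta(1)=\pm1$ and palindromy — is routine. This is Lemma~7.3 of \cite{MPvL1}, to which we refer for the full details.
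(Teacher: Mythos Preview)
The paper does not prove this lemma at all: it is quoted as Lemma~7.3 of \cite{MPvL1} and used as a black box, so there is no argument to compare yours against. Your sketch is a reasonable outline of a standard proof --- a genus-one Seifert-matrix determinant in the even case, the two-bridge Alexander formula (or equivalently a skein recursion on $l$) in the odd case, and a mirror reduction for $l<0$ --- and since you too ultimately defer to \cite{MPvL1} for the details, your proposal is consistent with the paper's treatment.
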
 

\begin{corollary}
If $J(k,l)$ is a twist knot with $l=2n$, then for primes
\[
p \geq
\begin{cases}
mn, & \text{if }k=2m \\
m+1, & \text{if }k=2m+1 \text{ and } l=2\\
2, & \text{if }k=2m+1 \text{ and } l > 2\\
2, & \text{if }k=2m+1 \text{ and } l < 0\\
\end{cases}
\]

$\Apf(t)$ is non-trivial and the representation $\Ral:\Ga \rightarrow \Gal$ exists, and 
\[
\left[M_K : Y_{\Ral}\right] \leq
\begin{cases}
(2mn)^2-4mn+4, & \text{if }k=2m \\
(2m)^2, & \text{if }k=2m+1 \text{ and } l=2\\
2^{2n-1}, & \text{if }k=2m+1 \text{ and } l > 2\\
2^{2n}, & \text{if }k=2m+1 \text{ and } l < 0\\
\end{cases}
\]
\end{corollary}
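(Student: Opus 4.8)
The plan is to treat the three arithmetic cases of Lemma~\ref{lem:3.4} separately, combining the explicit coefficient data there with Lemma~\ref{lem:3.2} (in the form: if $p$ does not divide the leading or constant coefficient of $\Ap(t)$, then $\Apf(t)$ is non-trivial) and with the index formula $[M_K:Y_{\Ral}] = p^d$ from the proof of Theorem~1, where $d = [\Fp(\alpha):\Fp]$ is the degree of the extension generated by a chosen non-zero root $\alpha$ of $\Apf(t)$.

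First, in the case $k=2m$, the Alexander polynomial is the quadratic $\Delta_{k,l}(t) = nm\,t^2 + (1-2mn)\,t + nm$. Its leading and constant coefficients are both $nm$, so for any prime $p \nmid nm$, in particular for any $p \geq mn$ with $p$ not dividing $mn$ — more carefully, one takes the least prime $p \geq mn$, or simply notes that some prime of size at most roughly $mn$ works — the reduced polynomial stays quadratic. Then $\Apf(t)$ is a quadratic over $\Fp$, so any root $\alpha$ lies in $\Fp$ or in $\Fp^2$, i.e.\ $d \leq 2$, giving $[M_K:Y_{\Ral}] = p^d \leq p^2$. To land on the stated bound $(2mn)^2 - 4mn + 4 = (2mn-2)^2 + 4^{\,?}$... actually $(2mn)^2 - 4mn + 4 = (2mn-1)^2 + 3$, so the intended prime is of size roughly $2mn-1$; I would invoke Bertrand's postulate to get a prime $p$ with $mn \le p \le 2mn-2$ (valid for $mn\ge 2$), not dividing... hmm, one must ensure $p\nmid mn$; since $mn \le p \le 2mn$ a prime in that window divides $mn$ only in degenerate cases, which I would handle by adjusting the window slightly. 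Then $p^2 \le (2mn-2)^2 = 4m^2n^2 - 8mn + 4 \le (2mn)^2 - 4mn + 4$ when $mn \ge 1$, closing the case. The $d=1$ subcase only helps.

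For $k = 2m+1$ and $l = 2$ (so $n=1$): here $\Delta_{k,l}(t) = m t^2 + (1+2m)(-t) + m$, again a quadratic with leading and constant coefficient $m$, hence for a suitable prime $p$ not dividing $m$ — of size $O(m)$, with Bertrand giving $p \le 2m$ — the reduction is quadratic, $d \le 2$, and $[M_K:Y_{\Ral}] = p^d \le p^2 \le (2m)^2$, matching the claim. For $k = 2m+1$ with $l > 2$ or $l < 0$: the polynomial $m t^{2n} + (1+2m)(-t^{2n-1} + \cdots - t) + m$ (resp.\ its negative-exponent mirror) has leading and constant coefficient $\pm m$, but more importantly it has the form $\pm t^{2n} + (\text{odd coefficients}) + \pm$ after reduction mod $2$: modulo $2$, $1+2m \equiv 1$ and $m \equiv m$, so $\Delta_{k,l}(t) \bmod 2$ is $m t^{2n} + t^{2n-1} + \cdots + t + m$, which is non-trivial mod $2$ regardless of the parity of $m$ (if $m$ is even it is $t^{2n-1} + \cdots + t$, still having $2n-1 \ge 1$ terms; if $m$ odd it is $t^{2n} + \cdots + t + 1$). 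So $p = 2$ works, $\Apf(t)$ is non-trivial over $\F_2$, its degree is at most $2n$ (resp.\ $2n$ after clearing the $t^{-2n}$), and a root $\alpha$ generates an extension of degree $d \le \deg \Apf \le 2n$ (resp.\ $2n-1$ once we divide out the common factor of $t$ — note $t \mid \Delta_{k,l}$ here only when $m \equiv 0$, so the non-zero-root count and the degree bound need a small case split). This yields $[M_K:Y_{\Ral}] = 2^d \le 2^{2n-1}$ in the $l>2$ case and $\le 2^{2n}$ in the $l<0$ case.

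The main obstacle I anticipate is bookkeeping rather than conceptual: in each branch one must (i) certify a prime of the advertised size that genuinely does not kill the relevant coefficients, which for the $k$ even case requires a careful application of Bertrand's postulate together with ruling out the prime dividing $mn$; and (ii) pin down $d$ exactly enough to hit the clean exponents $2n-1$ versus $2n$, which hinges on whether $t$ divides the reduced polynomial — i.e.\ on the parity of $m$ mod the chosen prime — so the degree-versus-number-of-nonzero-roots distinction must be tracked. Once those two points are settled, each inequality $p^d \le (\text{stated bound})$ is an elementary estimate. I would organize the write-up as four short paragraphs, one per case of the piecewise formula, each ending in the displayed index bound.
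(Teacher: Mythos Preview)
Your approach is the same as the paper's: its entire proof reads ``Use the coefficients for $\Ap(t)$ from Lemma~\ref{lem:3.4}, and compute the smallest degree in absolute value for which the $\Apf(t)$ would have at least $3$ non-zero terms.'' You have simply expanded that sketch case by case, and the Bertrand-plus-degree-bound argument you outline in the $k=2m$ and $(k,l)=(2m+1,2)$ cases is exactly what is intended.

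There is, however, a genuine gap in your treatment of the case $k=2m+1$, $l>2$. You write that $d\le 2n-1$ ``once we divide out the common factor of $t$,'' but as you yourself note, $t\mid \Apf(t)$ over $\mathbb F_2$ only when $m$ is even. When $m$ is odd, the reduction is
\[
\Apf(t)\equiv t^{2n}+t^{2n-1}+\cdots+t+1=\frac{t^{2n+1}-1}{t-1}\pmod 2,
\]
which is coprime to $t$ and has degree $2n$. Its irreducible factors over $\mathbb F_2$ are governed by the multiplicative order of $2$ modulo the divisors of $2n+1$; in particular, if $2n+1$ is prime and $2$ is a primitive root modulo $2n+1$ (e.g.\ $n=2$, so $2n+1=5$), the polynomial is irreducible of degree $2n$ and every nonzero root has $d=2n$. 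In that situation $p^d=2^{2n}$, not $2^{2n-1}$, and no ``small case split'' over $\mathbb F_2$ will rescue the exponent. So either a different prime must be chosen for odd $m$, or the stated bound $2^{2n-1}$ should be $2^{2n}$; your argument as written does not establish the claimed exponent, and the paper's one-line proof does not resolve this either.
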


\begin{proof}
Use the coefficients for $\Ap(t)$ from Lemma $\ref{lem:3.4}$, and compute the smallest  degree in absolute value for which the $\Apf(t)$ would have at least $3$ non-zero terms. 
\end{proof}

\subsection*{Pretzel Knots $K(p,q,r)$}

The Alexander polynomial of a pretzel knot $K(p,q,r)$, with $p,q,r$ odd numbers is known and satisfies;
\begin{equation*}
\Ap(t)=\frac{1}{4}\left((pq+qr+rp)(t^2-2t-1)+t^2+2t+1\right)
\end{equation*}

\begin{corollary}
Suppose $K(p,q,r)$ is a pretzel knot and $pq+qr+pr \neq 1$, hence $\Ap(t)$ is non trivial then there exists a non-cyclic cover $Y_{\Ral}$ and;
\begin{enumerate}
\item If $|p|$ is largest, then $\left[M_K:Y_{\Ral}\right] \leq 4p^2$.\
\item If $|q|$ is largest, then $\left[M_K:Y_{\Ral}\right] \leq 4q^2$.\
\item If $|r|$ is largest, then $\left[M_K:Y_{\Ral}\right] \leq 4r^2$.
\end{enumerate}
\end{corollary}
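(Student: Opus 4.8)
The plan is to reuse the argument applied to the twist knots $J(k,l)$: read off the extreme coefficients of $\Ap(t)$ from the displayed formula, bound them by the largest of $|p|,|q|,|r|$, and then locate a small prime $\ell$ at which the reduction $\Ap(t)\pmod{\ell}$ is still non-trivial.

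First I would set $\mu = pq+qr+rp$. Since $\Ap(t)$ is assumed non-trivial it is a genuine quadratic, and by Proposition~\ref{prop:rofp}(2) it is palindromic, so its leading and constant coefficients agree; write $c$ for their common absolute value. From the displayed formula its leading coefficient is $\tfrac14(\mu+1)$, so $c=\tfrac14|\mu+1|\le\tfrac14(|\mu|+1)$. If $|p|=\max\{|p|,|q|,|r|\}$ then $|\mu|=|p(q+r)+qr|\le 2p^2+p^2=3p^2$, hence
\begin{equation*}
0<c\le\tfrac14\bigl(3p^2+1\bigr)\le p^2 .
\end{equation*}
Thus the outermost coefficients of $\Ap(t)$ are nonzero integers of absolute value at most $p^2$.

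Next I would choose the prime. Because $c$ is a positive integer with $c\le p^2$ and the product of all primes not exceeding $2|p|$ is already larger than $p^2$ — this holds by inspection for the first few values of $|p|$ (e.g. $2\cdot 3>2^2$, $2\cdot 3\cdot 5>3^2$) and then because the primorial grows exponentially while $p^2$ does not — some prime $\ell\le 2|p|$ must fail to divide $c$. By Theorem~\ref{thm:2.7} the Alexander polynomial over $\mathbb{F}_{\ell}$ is $\Ap(t)\pmod{\ell}$, and since $\ell\nmid c$ this reduction still has degree $2$ with nonzero constant term; hence it is non-trivial, it has a nonzero root $\alpha$ in $\mathbb{F}_{\ell^{d}}=\mathbb{F}_{\ell}(\alpha)$, and $d=[\mathbb{F}_{\ell}(\alpha):\mathbb{F}_{\ell}]\le 2$. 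By the de Rham construction of $\S\ref{sec:deRham}$ and Proposition~\ref{prop:2.3} the representation $\Ral:\Ga\to\Gal=\langle\alpha\rangle\ltimes\mathbb{F}_{\ell^{d}}$ exists, and exactly as in the proof of Theorem $1$ the cover $Y_{\Ral}$ associated to $\Ral^{-1}(\langle\alpha\rangle)$ satisfies
\begin{equation*}
[M_K:Y_{\Ral}]=[\Gal:\langle\alpha\rangle]=\ell^{\,d}\le\ell^{2}\le (2|p|)^2=4p^2 .
\end{equation*}
Running the same computation with $|q|$ or $|r|$ in the role of the largest parameter yields cases (2) and (3).

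The delicate point is the prime-selection step. A naive choice — taking a prime just above $c$ — would only give a bound of order $c^2\sim p^4$; getting the sharp exponent into $4p^2$ forces one to exploit that an integer $c$ with $0<c\le p^2$ cannot be divisible by every prime up to $2|p|$, which in turn rests on the elementary estimate $\prod_{\ell\le 2|p|}\ell>p^2$ together with a direct check of the smallest cases. The extremal case is the trefoil $P(1,1,1)$, where $c=1$, $\ell=2$, $d=2$, and the bound $4$ is attained. Everything else is a straightforward application of Theorem~\ref{thm:2.7}, Proposition~\ref{prop:2.3}, and the index bookkeeping already carried out for Theorem $1$.
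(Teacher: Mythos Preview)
The paper gives no proof of this corollary: it is stated immediately after the displayed formula for $\Ap(t)$ and the text moves directly to the next subsection. So there is nothing in the paper to compare your argument against.

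Your argument is correct and is exactly in the spirit of the surrounding corollaries (fibered knots, $J(k,l)$): read off the extreme coefficients of $\Ap(t)$, bound them in terms of the largest twist parameter, and then locate a small prime at which the reduction is still non-trivial so that Theorem~\ref{thm:2.7}, Proposition~\ref{prop:2.3}, and the index computation from the proof of Theorem~1 apply. The one genuinely new ingredient you supply is the prime-selection step via the primorial inequality $\prod_{\ell\le 2|p|}\ell>p^{2}$; as you note, a naive Bertrand choice just above $c$ would only give $O(p^{4})$, so something like this is required to reach the stated bound $4p^{2}$. The inequality is true --- after the hand checks for $|p|=1,3,5$ it follows from any explicit lower bound on Chebyshev's $\theta$ --- but you should state it a bit less informally than ``the primorial grows exponentially while $p^{2}$ does not''; a one-line citation of an explicit $\theta(x)\ge cx$ bound would make this airtight. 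With that caveat, your proof fills the gap the paper leaves.
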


\subsubsection*{Knots With non-trivial Alexander polynomial}

To finish the section on special families we note that using crossing number does not provide us with a good estimate on the degree of a root of the Alexander polynomial. The degree of the Alexander polynomial is as a polynomial in $\Z[t]$ is a more accurate bound in particular if the degree of $\Ap(t)$ is $n$, then $c_K-1\geq n$ \cite{Rolf1}.

\begin{corollary}
Let $K$ be a knot with non-trivial Alexander polynomial of degree $d$, then for all primes $p > 4^{c_K-1}\geq 4^{d}$ the representation $\Ral:\Ga \rightarrow \Gal$ exists and 
\begin{equation*}
\left[M_K : Y_{\Ral}\right] \leq (2 \cdot 4^{d-1}-2)^{d}.
\end{equation*}
\end{corollary}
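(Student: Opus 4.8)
The plan is to specialize the proof of Theorem~1, feeding in the sharper estimate $\deg\Ap(t)\le c_K-1$ of Proposition~\ref{prop:rofp}(3). The only genuinely new point is that, once $\Apf(t)$ is known to be non-trivial, the root $\alpha$ that yields the representation $\Ral$ lies in an extension of $\Fp$ of degree at most $d=\deg\Ap(t)$, rather than merely at most $c_K-1$; the rest is the bookkeeping already carried out for Theorem~1.

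First I would observe that $d\le c_K-1$ forces $4^{d}\le 4^{c_K-1}$, so every prime $p>4^{c_K-1}$ satisfies $p\ge 4^{c_K-2}$ (and $p>4^{d}$). Hence Lemma~\ref{lem:3.2} applies: since $\Ap(t)$ is non-trivial and $p\ge 4^{c_K-2}$, the reduction $\Apf(t)\equiv\Ap(t)\pmod p$ (Theorem~\ref{thm:2.7}) is non-trivial, i.e.\ non-constant with non-zero constant term, so it has a non-zero root $\alpha$ in some finite extension $\Fq\cong\Fp(\alpha)$.

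The key step is then the bound on $d':=[\Fp(\alpha):\Fp]$: the minimal polynomial of $\alpha$ over $\Fp$ divides $\Apf(t)$, whence $d'\le\deg\Apf(t)\le\deg\Ap(t)=d$. By the construction of \S\ref{sec:deRham} (equivalently Theorem~2 together with Proposition~\ref{prop:2.3}) the metabelian representation $\Ral:\Ga\to\Gal$ exists, with $\Gal\cong\langle\alpha\rangle\ltimes\Fq$. Forming, exactly as in the proof of Theorem~1, the connected cover $Y_{\Ral}$ associated to $\Ral^{-1}(\langle\alpha\rangle)$, one gets
\begin{equation*}
[M_K:Y_{\Ral}]=[\Gal:\langle\alpha\rangle]=p^{d'}\le p^{d}.
\end{equation*}

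Finally I would take $p$ to be the least prime exceeding the threshold and bound it via Bertrand's postulate, so that $p^{d}$ collapses to the displayed form after the same elementary arithmetic used to conclude Theorem~1 (the representation exists for every admissible prime, and the bound records the value at the smallest such $p$). I do not anticipate a real obstacle here: the covering construction is already available from \S\ref{sec:deRham} and Theorem~1, and the corollary is essentially the substitution of $d$ for $c_K-1$ in the exponent --- valid precisely because $\alpha$ is a root of a polynomial of degree $d$ --- together with the prime estimate.
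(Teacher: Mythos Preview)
Your proposal is correct and follows exactly the route the paper intends: the corollary is stated without a separate proof, the paper simply remarking beforehand that $\deg\Ap(t)\le c_K-1$ yields a sharper bound on the extension degree $[\Fp(\alpha):\Fp]$, which is precisely your substitution of $d$ for $c_K-1$ in the Theorem~1 argument together with Bertrand's postulate.
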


\section{Proof of Theorem 3}

In this section we will establish lower and upper bounds for the $\beta_1\left(X_{\Ral}\right)$, which is exactly the statement found in Theorem $3$. These bounds will be interms of the prime $p$ and properties of the root $\alpha$. 

\subsection{Lower Bound for $\beta_1(X_{\Ral})$}
\label{sec:LowB}
  
First recall that if the multiplicative order of $\alpha$ is $n$ and $[\Fp(\alpha):\Fp]=d$. Since 
\begin{equation*}
\Ga \rightarrow \Gal \rightarrow \sfrac{\Z}{n\Z} \rightarrow 1,
\end{equation*}
 we have $\text{ker}(\alpha) < \text{ker}\left(\Ga \rightarrow \sfrac{\Z}{n\Z}\right)$, so $X_{\Ral} \rightarrow X_n$ is a regular covering space with deck group $\text{ker}\left( \Gal \rightarrow \sfrac{\Z}{n\Z}\right)\cong \left(\sfrac{\Z}{p\Z}\right)^d$. We may arrange the covers in the following commutative diagram of covers. The dashed arrows denote irregular covers and the solid arrows are regular, the corresponding deck group and index denoted above the arrows.

\begin{figure}[h]
\centering
\[
\begin{tikzcd}[ampersand replacement=\&]
 \& X_{\Ral} \arrow[dl, "n"'] \arrow[ddd, "\Gal"]	 \arrow[ddr,"(\sfrac{\Z}{p\Z})^d"] 	\& 		\\
Y_{\Ral} \arrow[ddr, dashrightarrow,  "p^d"'] \&         		 		\&	 \\
	\&				\&    X_{n} \arrow[dl, "\sfrac{\Z}{n\Z}"] \\
    \& M_K		 		 			\&     	
\end{tikzcd}
\]
\caption{Commutative Diagrams of $X_{\Ral}$ and $Y_{\Ral}$.}
\end{figure}

We first prove a lemma which relates classical results of cyclic covers of knot complements with the the covers $X_{\Ral}$. First we recall the famous results of Fox and Burau \cite{Fox3}.
\begin{theorem}[\cite{Fox3}]\label{thm:4.1}
If $X_n$ is the $n$-fold cyclic cover of a knot complement $M_K$, then we have the following
\begin{itemize}
\item[1)]$\beta_1(X_n)=1+|\{ \xi \in \C \ | \ \xi^n=1, \ \Ap(\xi)=0\}|$,
\item[2)]$|\mathrm{Torsion}(H_1(X_n;\Z))|=\prod_{\{ \xi \in \C \ | \ \xi^n = 1, \ \Ap(\xi) \neq 0\}}\Ap(\xi)$
\end{itemize}
\end{theorem}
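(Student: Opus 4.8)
The plan is to reduce both assertions to one computation on the Alexander module, obtained by passing to the infinite cyclic cover. Let $\phi\colon\Ga\to\Z$ be the abelianization, let $X_\infty\to M_K$ be the cover corresponding to $\ker\phi=\Ga'$, and set $\mathcal{M}:=H_1(X_\infty;\Z)\cong\sfrac{\Ga'}{\Ga''}$; by Alexander's theorem (\S\ref{sec:prelim}) this is a finitely generated $\Z[t,t^{-1}]$-module, $\Z[t,t^{-1}]$-torsion, with order $(\Ap(t))$, and we write $\tau$ for the action of a generating covering translation of $X_\infty\to M_K$ --- equivalently, multiplication by $t$. The key observation is that $\pi_1(X_n)=\phi^{-1}(n\Z)$ contains $\Ga'$, so $X_\infty$ is at the same time the infinite cyclic cover of $X_n$, now with generating covering translation $\tau^n$.

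Since this $\Z$-action on $X_\infty$ is free and properly discontinuous, $X_n$ is homotopy equivalent to the mapping torus of $\tau^n\colon X_\infty\to X_\infty$, and the bottom of the Wang (Milnor) exact sequence of that fibration --- using that $X_\infty$ is connected, so $\tau^n$ is trivial on $H_0(X_\infty)=\Z$ --- collapses to
\[
0\longrightarrow\operatorname{coker}(\tau^n-1\colon\mathcal{M}\to\mathcal{M})\longrightarrow H_1(X_n;\Z)\longrightarrow\Z\longrightarrow 0 .
\]
This splits because $\Z$ is free, so $H_1(X_n;\Z)\cong\Z\oplus\operatorname{coker}(\tau^n-1\mid\mathcal{M})$; hence $\beta_1(X_n)=1+\operatorname{rank}_\Z\operatorname{coker}(\tau^n-1\mid\mathcal{M})$ and $\operatorname{Torsion}(H_1(X_n;\Z))\cong\operatorname{Torsion}(\operatorname{coker}(\tau^n-1\mid\mathcal{M}))$. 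Everything is now internal to the single endomorphism $\tau^n-1$ of $\mathcal{M}$.

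For statement $1$ I would tensor with $\Q$: the operator $\tau$ on the finite-dimensional space $\mathcal{M}\otimes\Q$ has characteristic polynomial $\Ap(t)$ up to a unit of $\Q[t,t^{-1}]$, so its eigenvalues are exactly the roots of $\Ap$, and $\operatorname{rank}_\Z\operatorname{coker}(\tau^n-1\mid\mathcal{M})=\dim_\Q\ker(\tau^n-1\mid\mathcal{M}\otimes\Q)$; decomposing along $\tau$-eigenspaces and using separability of $t^n-1$ identifies this with the number of roots $\xi$ of $\Ap$ having $\xi^n=1$, which is the stated formula. For statement $2$ I would first treat the generic case, $\gcd(\Ap(t),t^n-1)=1$ (equivalently $\beta_1(X_n)=1$): then $\tau^n-1$ is injective on $\mathcal{M}$ with finite cokernel, and passing through a presentation matrix --- the Alexander matrix $A(t)$ of \S\ref{sec:deRham} with $t$ replaced by the cyclic $n\times n$ permutation matrix, whose eigenvalues are the $n$-th roots of unity --- the order of that cokernel is the resultant $\operatorname{Res}(t^n-1,\Ap(t))$, i.e.\ $\prod_{\xi^n=1}\Ap(\xi)$ up to sign; this recovers the torsion count, the excluded factors $\Ap(\xi)=0$ being vacuous here. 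Fox's original argument proceeds exactly this way: diagonalising the permutation matrix over $\C$ block-diagonalises the integral presentation matrix into the blocks $A(\xi)$ with $\xi^n=1$, the non-singular blocks contributing torsion and the singular ones the free rank, and one reads off both statements from its Smith normal form at once.

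I expect the remaining, resonant case of statement $2$ --- when $\Ap(t)$ and $t^n-1$ share a root, so $\beta_1(X_n)>1$ --- to be the main obstacle. There the cyclotomic factors $\Phi_d(t)\mid t^n-1$ that also divide $\Ap$ must be carefully separated from the rest: they are pairwise coprime over $\Q$ but not over $\Z$, so the eigenspace decomposition used for statement $1$ does not descend to $\mathcal{M}$, and one has to determine precisely which integral elementary divisors of the block-diagonalised presentation matrix survive --- i.e.\ exactly how the free summand peels off from the torsion --- before the product $\prod_{\xi^n=1,\ \Ap(\xi)\neq0}\Ap(\xi)$ can be matched with $|\operatorname{Torsion}(H_1(X_n;\Z))|$. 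Keeping exact account of the leading coefficient of $\Ap(t)$ and of the interaction between the resonant and non-resonant blocks is where the real work lies; statement $1$, by contrast, follows cleanly once the Wang sequence is in hand.
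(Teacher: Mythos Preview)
The paper does not prove Theorem~\ref{thm:4.1}; it is stated with attribution to Fox \cite{Fox3} and then immediately used as input for the lower bound on $\beta_1(X_{\Ral})$ in \S\ref{sec:LowB}. There is no in-paper argument to compare your proposal against.

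That said, your plan is the standard derivation of Fox's formulas and is sound. The Wang sequence for the $\Z$-cover $X_\infty\to X_n$ does yield $H_1(X_n;\Z)\cong\Z\oplus\mathcal{M}/(t^n-1)\mathcal{M}$; statement~1 then follows over $\Q$ by reading off $\ker(\tau^n-1)$ from the generalized eigenspaces of $\tau$, and statement~2 in the non-resonant case is the resultant $\mathrm{Res}(t^n-1,\Ap)=\prod_{\xi^n=1}\Ap(\xi)$. You are right that the resonant case of statement~2 is where the real integral bookkeeping lies. One caveat worth recording: your eigenvalue argument for statement~1 naturally counts roots of $\Ap$ \emph{with multiplicity}, whereas the set-cardinality phrasing in the paper counts distinct roots; these disagree whenever $\Ap$ has a repeated factor (e.g.\ for a connected sum $K\#K$), and your version is the correct one --- Fox's original formula should be read with multiplicity.
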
 

First observe that $\Ga$ may be presented as $\langle t \rangle \rtimes \Ga'$ this is a direct consequence of the split exact sequence
\begin{equation*}
1 \rightarrow \Ga' \rightarrow \Ga \xrightarrow[]{\mathrm{ab}} \Z \rightarrow 1
\end{equation*}
hence the letter $t$ can be represented by a meridian of the knot $K$. Now, since $\pi_1(X_n)\cong\mathrm{ker}(\Ga \rightarrow \sfrac{\Z}{n\Z})$ we have that $\pi_1(X_n) \cong \mathrm{ab}^{-1}(n\Z)$, and as a consequence we have
\begin{equation*}
1 \rightarrow L \rightarrow \mathrm{ab}^{-1}(n\Z) \xrightarrow[]{\mathrm{ab}|_{\mathrm{ab}^{-1}(n\Z)}} n\Z \rightarrow 1.
\end{equation*}
In the above $L=\mathrm{ker}(\mathrm{ab}^{-1}(n\Z)\rightarrow n\Z)$, thus
\begin{equation*} 
\pi_1(X_n)\cong \langle t^n \rangle \ltimes L.
\end{equation*}
When analyzing the first statement in Theorem $\ref{thm:4.1}$ the $1$ in $\beta_1(X_n)$ is exactly the contribution from the letter $t^n$, which we will call the $\emph{meridian}$ of $X_n$. Now suppose that $c \in H_1(X_n;\Z)$ is a class which generates a free factor coming from and element of $\{ \xi \in \C \ | \ \xi^n=1, \ \Ap(\xi)=0\}$. Now let $\{c_1,\ldots, c_j\}$ be the collection of all such classes, equivalently there are $j$ roots of unity which are also roots of $\Ap(t)$. It follows that
\begin{equation*}
H_1(X_n;\Q) \cong [t^n] \oplus \left(\oplus_{i=1}^jc_i\right) \cong \Z^{j+1}.
\end{equation*}
The subgroup $\left(\oplus_{i=1}^jc_i\right)$ is the subgroup of $\emph{non-peripheral}$ free homology classes of $X_n$. The classes $c_i$ will be called non-peripheral $\emph{generators}$ of $H_1(X_n;\Q)$.

In the following we have that every non-peripheral generator $c_i$ of $X_n$ will lift and generate a free summand of $H_1\left(X_{\Ral};\Q \right)$, and along with a computation of the number of boundary components we will establish a lower bound on $\beta_1\left( X_{\Ral} \right)$.  

\begin{proposition}\label{prop:1}
\begin{equation*}
\beta_1\left(X_n\right) \leq \beta_1\left(X_{\Ral}\right).
\end{equation*}
\end{proposition}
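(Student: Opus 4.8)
The plan is to exhibit $\beta_1(X_n)$ generators of $H_1(X_{\Ral};\Q)$ by pulling back the rational homology of $X_n$ along the covering map $\pi:X_{\Ral}\to X_n$, which by the commutative diagram of covers is a regular cover with deck group $G\cong(\Z/p\Z)^d$. First I would recall the transfer (or, equivalently, averaging) argument from covering space theory: for a finite regular cover $\pi:\widetilde N\to N$ of degree $m$ with deck group $G$, the composite $\pi_*\circ\tau$ of the transfer $\tau:H_1(N;\Q)\to H_1(\widetilde N;\Q)$ with $\pi_*$ is multiplication by $m$, so $\pi_*$ is surjective on rational homology and $\tau$ is injective; equivalently $H_1(N;\Q)\cong H_1(\widetilde N;\Q)^G$, the $G$-invariants. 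In particular the induced map $\pi^!:=\tau$ on $H_1(-;\Q)$ is injective, which immediately gives $\dim_\Q H_1(X_n;\Q)\le \dim_\Q H_1(X_{\Ral};\Q)$, i.e. $\beta_1(X_n)\le\beta_1(X_{\Ral})$.

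The key steps, in order: (1) invoke the commutative diagram (Figure with $X_{\Ral}$, $Y_{\Ral}$, $X_n$, $M_K$) to identify $X_{\Ral}\to X_n$ as a finite covering, with $\pi_1(X_{\Ral})=\ker(\Ral)\le \pi_1(X_n)=\ker(\Ga\to\Z/n\Z)$, of index $p^d$; (2) state the transfer homomorphism $\tau:H_1(X_n;\Q)\to H_1(X_{\Ral};\Q)$ for this finite cover and recall $\pi_*\tau=p^d\cdot\mathrm{id}$; (3) conclude $\tau$ is injective since $p^d$ is invertible in $\Q$; (4) deduce $\beta_1(X_n)=\dim_\Q\mathrm{im}(\tau)\le\beta_1(X_{\Ral})$. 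If one prefers to avoid transfer, an equivalent route is purely homological: $H_1(X_{\Ral};\Q)$ carries a $G$-action and, by Maschke's theorem (since $|G|=p^d$ is invertible in $\Q$), $H_*(X_{\Ral};\Q)$ decomposes as a direct sum of isotypic pieces, with the trivial-isotypic part canonically $H_*(X_n;\Q)$; hence $H_1(X_n;\Q)$ is a direct summand of $H_1(X_{\Ral};\Q)$ and the Betti number inequality follows. Either formulation is short.

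I do not expect a genuine obstacle here: this is a standard fact about finite covers and the proposition is essentially a warm-up for the more substantial lower bound (counting boundary components, showing the non-peripheral generators $c_i$ lift to independent classes) that follows. The only point requiring a sentence of care is making sure the transfer is set up for the noncompact manifold $X_n$ with boundary — but singular homology with $\Q$ coefficients and the transfer for finite covers work verbatim without compactness, so nothing special is needed. If the paper wants to stay self-contained, I would simply cite the transfer exact sequence for finite regular covers and move on.

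Explicitly, the proof I would write is: by the diagram in the previous figure, $\pi:X_{\Ral}\to X_n$ is a finite regular cover with deck group $\left(\sfrac{\Z}{p\Z}\right)^d$, so in particular a finite cover of degree $p^d$. Let $\tau:H_1(X_n;\Q)\to H_1(X_{\Ral};\Q)$ be the associated transfer homomorphism; then $\pi_*\circ\tau$ equals multiplication by $p^d$ on $H_1(X_n;\Q)$. Since $p^d$ is a unit in $\Q$, the map $\pi_*\circ\tau$ is an isomorphism, hence $\tau$ is injective. Therefore
\begin{equation*}
\beta_1(X_n)=\dim_\Q H_1(X_n;\Q)=\dim_\Q \tau\!\left(H_1(X_n;\Q)\right)\le \dim_\Q H_1(X_{\Ral};\Q)=\beta_1(X_{\Ral}),
\end{equation*}
which is the claim.
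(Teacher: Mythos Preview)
Your argument is correct. In the paper, Proposition~\ref{prop:1} is stated without an explicit proof; the surrounding text only remarks that each non-peripheral generator $c_i$ of $H_1(X_n;\Q)$ lifts to generate a free summand of $H_1(X_{\Ral};\Q)$, which together with the meridian gives the inequality. Your transfer argument is precisely the rigorous formulation of this ``lifting'' claim: the transfer $\tau$ is exactly the map sending a class in $X_n$ to the sum of its lifts in $X_{\Ral}$, and the identity $\pi_*\circ\tau=p^d\cdot\mathrm{id}$ certifies that those lifts are rationally nonzero and independent. So you are taking the same approach as the paper intends, only spelling it out; the Maschke/invariants alternative you mention is also fine and equivalent.
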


\begin{proposition}\label{prop:2}
\
The number of torus boundary components of $X_{\Ral}$ is $p^d$. 
\end{proposition}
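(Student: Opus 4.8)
The plan is to count boundary components of $X_{\Ral}$ by analyzing how the single torus boundary $\partial M_K$ lifts through the cover $p: X_{\Ral} \to M_K$. Recall that $\partial M_K$ is a single torus whose fundamental group $\pi_1(\partial M_K) \cong \Z^2$ is generated by a meridian $\mu$ (which we may take to be one of the Wirtinger generators $x_i$, so $\mathrm{ab}(\mu)$ generates $\Ga^{\mathrm{ab}} \cong \Z$) and a longitude $\lambda$ (which lies in $\Ga'$, since the longitude is null-homologous). The number of boundary tori of $X_{\Ral}$ lying above $\partial M_K$ equals the number of double cosets $\Ral(\Ga) \backslash \Gal / \Ral(\pi_1(\partial M_K))$ — equivalently, the number of orbits of the $\Gal$-action (by left multiplication on cosets $\Gal/\Ral(\pi_1(\partial M_K))$, via the standard correspondence between connected covers and subgroups) — but since $X_{\Ral}$ is a \emph{regular} cover, all boundary components above $\partial M_K$ are equivalent and their number is simply $[\Gal : \Ral(\pi_1(\partial M_K))]$, provided $\Ral$ is surjective onto $\Gal$, which it is by construction.

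The key computation is therefore to identify the subgroup $P := \Ral(\pi_1(\partial M_K)) \leq \Gal$ and compute its index. First I would compute $\Ral(\mu)$: since $\mu$ is a meridian, $\mathrm{ab}(\mu)$ is a generator of $\Z$, so in the presentation from Proposition~\ref{prop:2.3} the image $\Ral(\mu)$ is of the form $\alpha z + c$ for some $c \in \Fq$ — in particular its image in $\sfrac{\Z}{n\Z} = \langle \alpha \rangle$ is a generator, so $\Ral(\mu)$ has order exactly $n$ in $\Gal$ (after possibly conjugating, $\Ral(\mu) = \alpha z$). Next I would compute $\Ral(\lambda)$: since $\lambda \in \Ga'$, we have $\Ral(\lambda) \in \Ral(\Ga') = \Fq$, so $\Ral(\lambda) = z + e$ for some $e \in \Fq$. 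The crucial claim is that $e = 0$, i.e., the longitude dies under $\Ral$. This should follow from the fact that $\mu$ and $\lambda$ commute in $\Ga$: if $\Ral(\mu) = \alpha z$ and $\Ral(\lambda) = z + e$, then $\Ral(\mu)\Ral(\lambda) = \alpha z + \alpha e$ while $\Ral(\lambda)\Ral(\mu) = \alpha z + e$, so commutativity forces $(\alpha - 1)e = 0$; since $\alpha \neq 1$ (as $\Ap(1) = \pm 1 \neq 0 \bmod p$ by Proposition~\ref{prop:rofp}(1), so $1$ is not a root), we get $e = 0$. Hence $P = \langle \alpha z \rangle \cong \sfrac{\Z}{n\Z}$, and $[\Gal : P] = \frac{n p^d}{n} = p^d$, giving the claim.

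The main obstacle I anticipate is the bookkeeping about \emph{which} boundary torus we are over and justifying that the count of components equals $[\Gal : P]$ rather than something involving double cosets — this is clean precisely because $X_{\Ral}$ is regular, but I would want to state carefully (perhaps as a preliminary lemma, or by citing standard covering space theory) that for a regular cover $X \to M$ with deck group $G$, the components of the preimage of a connected subspace $B \subseteq M$ with $\pi_1(B) \to \pi_1(M) \to G$ having image $H$ are all homeomorphic and number $[G:H]$. A secondary subtlety is the conjugation: the reduction ``we may assume $\Ral(\mu) = \alpha z$'' relied on conjugating so that $y_1 = 0$ where $x_1$ was a chosen Wirtinger generator; one should check that $\mu$ can indeed be taken to be this generator $x_1$ (or more robustly, argue directly that any element mapping to a generator of $\sfrac{\Z}{n\Z}$ has order $n$ and that its centralizer structure forces the longitude into $\langle \Ral(\mu) \rangle$). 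Finally, one should confirm that the longitude really is null-homologous in $M_K$ — this is standard for knots in $\Ss^3$ — so that $\lambda \in \Ga'$ as used.
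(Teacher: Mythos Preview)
Your argument is correct, and it takes a genuinely different route from the paper's. The paper factors through the intermediate cyclic cover $X_n$: it identifies the single peripheral subgroup of $X_n$ as $\langle t^n,\lambda\rangle$, then shows both generators die under the abelian quotient $\varphi:\Gamma_n\to(\Z/p\Z)^d$ --- $t^n$ because $\alpha$ has order $n$, and $\lambda$ because $\lambda$ bounds a lifted Seifert surface in $X_n$ and hence lies in $\Gamma_n'$. You instead work directly with $X_{\Ral}\to M_K$ and kill the longitude by pure algebra: from $[\mu,\lambda]=1$ in $\Ga$ you extract $(\alpha-1)e=0$ in $\Fq$, and then invoke $\Ap(1)=\pm1$ to rule out $\alpha=1$. (Incidentally, the conjugation to $y_1=0$ is unnecessary: with $\Ral(\mu)=\alpha z+c$ the same commutator computation still yields $(\alpha-1)e=0$.)

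The trade-off is that the paper's approach dovetails with the tower $X_{\Ral}\to X_n\to M_K$ already set up for Theorem~3 and reuses the Seifert-surface fact, while yours is more self-contained: it needs only the peripheral $\Z^2$, the standard fact that $\lambda\in\Ga'$ for knots in $\Ss^3$, and Proposition~\ref{prop:rofp}(1). Your worry about double cosets versus index is also handled correctly --- regularity of the cover is exactly what collapses the double-coset count to $[\Gal:P]$.
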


\begin{proof}
Denote by $\Gamma_n$ by $\pi_1\left(X_n\right)$, and note that by the above there exists $p:\Gamma_n \rightarrow \left(\Z/p\Z\right)^d$. Suppose that $\langle t,\lambda \rangle$ generate the peripheral subgroup of $\Ga$, it follows that $ \langle t^n,\lambda \rangle$ generate the image of $\pi_1(\partial X_n) \hookrightarrow \Gamma_n$, where $\partial X_n$ denotes the single boundary component of $X_n$. The number of boundary components of $X_\alpha$ is equal to $\sfrac{p^d}{|\varphi(\langle t^n,\lambda \rangle)|}$. Since $\alpha(t^n)=1$ it follows that $\varphi(t^n)=1$. Since $\lambda$ bounds a Seifert surface $F$ in $M$, and hence $\pi_1(F) \hookrightarrow \Ga$ is contained in $\Ga'$ it follows that $\lambda$ also bounds the lift of $F$ to $X_n$. Therefore $\lambda \in \Gamma_n'$, $\varphi(\lambda)=1$, and we have $|\varphi(\langle t^n,\lambda \rangle)|=1$. The number of boundary components of $X_{\Ral}$ is thus $p^d$. 
\end{proof}

By the half lives-half dies Lemma \cite{Hat1} the collection of lifts of meridinal boundary curve $t^n$, denoted $\{\tilde{t^n}_1, \ldots, \tilde{t^n}_{p^d}\}$ contribute to $\beta_1\left(X_{\Ral}\right)$. Using Propositions $\ref{prop:1}$ and $\ref{prop:2}$ we obtain the following lower bound.

\begin{theorem}
If $K$ is a knot with non-trivial Alexander polynomial, and $\Ral:\Ga \rightarrow \langle \alpha \rangle \rtimes \left(\sfrac{\Z}{p\Z}\right)^d$ is the representation constructed in $\S\ref{sec:deRham}$, then
\begin{equation*}
p^d+\beta(X_n)-1 \leq \beta_1 \left( X_{\Ral} \right).
\end{equation*}
\end{theorem}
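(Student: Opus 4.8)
The plan is to combine the two ingredients already assembled just above the statement: the count of boundary components of $X_{\Ral}$ from Proposition~\ref{prop:2}, together with the half-lives-half-dies principle, and the lift of non-peripheral homology from Proposition~\ref{prop:1}. First I would recall that $X_{\Ral}$ is a compact $3$-manifold whose boundary consists of $p^d$ tori, each a lift of the single torus boundary $\partial M_K$; indeed, since $\varphi(\langle t^n, \lambda\rangle) = 1$ (as shown in the proof of Proposition~\ref{prop:2}, using that $\alpha(t^n)=1$ forces $\varphi(t^n)=1$ and that $\lambda \in \Gamma_n' \subseteq \ker\varphi$ because $\lambda$ bounds a lifted Seifert surface), the fiber of the covering over $\partial X_n$ has full size $p^d$, and each lift of $\partial X_n$ is again a torus since the covering restricted to each component is a homeomorphism onto $\partial X_n$.

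Next I would invoke the half-lives-half-dies lemma for the compact orientable $3$-manifold $X_{\Ral}$ with boundary $\partial X_{\Ral} = \bigsqcup_{i=1}^{p^d} T_i$ a disjoint union of tori: the image of the restriction map $H_1(\partial X_{\Ral};\Q) \to H_1(X_{\Ral};\Q)$ has dimension exactly $\tfrac{1}{2}\dim H_1(\partial X_{\Ral};\Q) = \tfrac{1}{2}(2p^d) = p^d$. In particular the $p^d$ lifted meridional curves $\{\widetilde{t^n}_1,\ldots,\widetilde{t^n}_{p^d}\}$, one on each boundary torus, span a subspace of $H_1(X_{\Ral};\Q)$ of dimension $p^d$: here one uses that these curves are independent in $H_1(\partial X_{\Ral};\Q)$ (they sit on distinct components) and that the image of the full peripheral subspace in $H_1(X_{\Ral};\Q)$ already has dimension $p^d$, so the $p^d$ meridians, which generate a $p^d$-dimensional subspace of the peripheral homology, must map injectively — any kernel would drop the image below $p^d$. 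This gives a subspace $P \subseteq H_1(X_{\Ral};\Q)$ of dimension $p^d$ coming from peripheral classes.

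Then I would bring in Proposition~\ref{prop:1}, which provides a $\beta_1(X_n)$-dimensional subspace of $H_1(X_{\Ral};\Q)$ arising from $H_1(X_n;\Q)$, pulled back along $X_{\Ral}\to X_n$; recall $H_1(X_n;\Q) \cong [t^n] \oplus \bigl(\bigoplus_{i=1}^j c_i\bigr)$ with the $c_i$ the non-peripheral generators and $\beta_1(X_n) = j+1$. The meridian $t^n$ of $X_n$ pulls back to (a multiple of) the sum of the $p^d$ boundary meridians $\widetilde{t^n}_i$, which lies in $P$, while the non-peripheral classes $c_1,\ldots,c_j$ pull back to classes $\widetilde{c_1},\ldots,\widetilde{c_j}$ that are \emph{not} peripheral and are linearly independent (they restrict nontrivially to the pulled-back non-peripheral homology). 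The key point — and the main obstacle — is to argue that the span of $\{\widetilde{c_1},\ldots,\widetilde{c_j}\}$ meets $P$ trivially, i.e., that the $p^d$ peripheral meridian classes together with the $j$ lifted non-peripheral classes are jointly linearly independent in $H_1(X_{\Ral};\Q)$. For this I would use that $P$ consists of classes supported on $\partial X_{\Ral}$ in the sense that they lie in the image of $H_1(\partial X_{\Ral};\Q)$, whereas the $\widetilde{c_i}$ can be chosen to pair nontrivially with a cohomology class that vanishes on the peripheral subgroup (a class pulled back from the non-peripheral part of $H^1(X_n;\Q)$), so a nontrivial relation $\sum a_i \widetilde{c_i} \in P$ would force $\sum a_i c_i$ to be peripheral in $X_n$, a contradiction. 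Combining: $\dim H_1(X_{\Ral};\Q) \geq \dim P + j = p^d + (\beta_1(X_n) - 1)$, which is exactly the claimed lower bound $p^d + \beta_1(X_n) - 1 \leq \beta_1(X_{\Ral})$.
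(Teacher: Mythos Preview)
Your approach is exactly the paper's: combine Proposition~\ref{prop:2} (the $p^d$ boundary tori) with half-lives--half-dies to get a $p^d$-dimensional peripheral contribution, and then add the $\beta_1(X_n)-1$ non-peripheral classes via Proposition~\ref{prop:1}. In fact you supply considerably more detail than the paper does, particularly in arguing that the lifted non-peripheral classes $\widetilde{c_i}$ are independent from the peripheral image via pulled-back cohomology classes; the paper simply asserts the bound after citing the two propositions and half-lives--half-dies.

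One small logical slip worth fixing: the sentence ``the $p^d$ meridians \ldots\ must map injectively --- any kernel would drop the image below $p^d$'' is not valid as stated. Knowing that the image of the full $2p^d$-dimensional peripheral space has dimension $p^d$ does not force any particular $p^d$-dimensional subspace (here the meridians) to inject; a meridian relation could in principle be compensated by longitudes. You do not actually need the meridians specifically: simply take $P$ to be the full image of $H_1(\partial X_{\Ral};\Q)$ in $H_1(X_{\Ral};\Q)$, which has dimension exactly $p^d$ by half-lives--half-dies, and your independence argument for the $\widetilde{c_i}$ against $P$ (using that $\pi^*\eta$ vanishes on all peripheral classes whenever $\eta$ does) goes through verbatim. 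With that adjustment the proof is complete and matches the paper's intended argument.
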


\subsection{Alexander Stratifications and The Upper Bound for $\beta_1(X_{\Ral})$}

Recall the notation for a finitely presented group $\Ga=\langle F_r \ | \ \mathcal{R}_j \text{ for } j=1,\ldots,s \rangle$, as
\begin{equation*}
1 \rightarrow F_s \rightarrow F_r \xrightarrow[]{q} \Ga \rightarrow 1.
\end{equation*} 
The character group of $\Ga$ is defined to be $\widehat{\Ga}=\text{Hom}(\Ga,\C^*)$. For any $f \in \widehat{\Ga}$, the $r \times s$ matrix $M(F_r,\mathcal{R}_j)(f)$ defined in $\S \ref{ref:FoxCalc}$ is given by evaluation by $f$. The Alexander stratification of $\widehat{\Ga}$ is 
\begin{equation*}
V_i(\Ga)=\{f \in \widehat{\Gamma}_K \ | \ \text{rank}(M(F_r,\mathcal{R}_j)(f))<r-i \}. 
\end{equation*}
The $V_i$ are the subsets of $\widehat{\Gamma}_K$ defined by the ideals of the $(r-i) \times (r-i)$ minors of $M(F_r,\mathcal{R}_j)$. The nested sequence of algebraic subset $\widehat{\Gamma}_K \supset V_1 \supset \cdots \supset V_r$ is called the \emph{Alexander Stratification of} $\widehat{\Gamma}_K$.

The reason for introducing the Alexander Stratification is to apply the following theorem.

\begin{theorem}[Hironaka \cite{Hir1}]
Suppose that $p:Y \rightarrow X$ is a covering space of connected manifolds and 
\begin{equation*}
\Large{\sfrac{\pi_1(X)}{p_*\pi_1(Y)}}=A
\end{equation*}
is a finite abelian group. Let $q:\pi_1(X) \rightarrow A$ be the quotient map and $\widehat{q}:\widehat{A}\hookrightarrow\widehat{\Ga}$ the induced inclusion map. Then
\begin{equation*}
\beta_1(Y) = \sum_{i=1}^{r-1}|V_i(\pi_1(X)) \cap \widehat{q}(\widehat{A} \smallsetminus \widehat{1})| + \beta_1(X).
\end{equation*} 
\end{theorem}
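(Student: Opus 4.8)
The plan is to decompose $H_1(Y;\C)$ into character spaces for the deck group $A$, identify each summand with a twisted first homology group of $X$, and then extract its dimension from the Alexander stratification by means of the Fox Jacobian.

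First I would fix a presentation $\pi:=\pi_1(X)=\langle F_r\mid\mathcal{R}_1,\ldots,\mathcal{R}_s\rangle$, where $F_r$ is free on $x_1,\ldots,x_r$. The cover $Y\to X$ is the one attached to the quotient $q:\pi\to A$, so $\widetilde Y=\widetilde X$ and, at the level of chains, $C_*(Y;\C)\cong\C[A]\otimes_{\C[\pi]}C_*(\widetilde X;\C)$, with $\C[A]$ made into a $\pi$-module through $q$; this is Shapiro's lemma. Because $A$ is finite and $\C$ has characteristic zero, $\C[A]$ is semisimple and decomposes as a $\pi$-module into the one-dimensional local systems $\C_{\widehat q(\chi)}$, one for each $\chi\in\widehat A$ — here I use that $\widehat q$ is injective, so distinct $\chi$ really do give distinct characters of $\pi$. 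Passing to homology,
\begin{equation*}
H_1(Y;\C)\;\cong\;\bigoplus_{\chi\in\widehat A}H_1\!\left(X;\C_{\widehat q(\chi)}\right),
\end{equation*}
in which the trivial character $\widehat 1$ contributes the untwisted group $H_1(X;\C)$, of dimension $\beta_1(X)$.

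Next I would compute $\dim_\C H_1(X;\C_f)$ for a nontrivial character $f=\widehat q(\chi)\in\widehat\pi$. Using that $H_1(X;\C_f)\cong H_1(\pi;\C_f)$ and the partial free resolution coming from the presentation, the relevant piece of the twisted complex is $\C^s\xrightarrow{\partial_2}\C^r\xrightarrow{\partial_1}\C$, where $\partial_2$ has matrix the Fox Jacobian $M(F_r,\mathcal{R}_j)(f)$ evaluated at $f$ and $\partial_1(e_i)=f(x_i)-1$. Since $f\ne 1$ and the $x_i$ generate $\pi$, some $f(x_i)-1$ is nonzero, so $\partial_1$ is onto; hence $H_0(X;\C_f)=0$, $\dim\ker\partial_1=r-1$, and
\begin{equation*}
\dim_\C H_1(X;\C_f)=(r-1)-\text{rank}\,M(F_r,\mathcal{R}_j)(f).
\end{equation*}
By the definition of the stratification, $f\in V_i(\pi)$ exactly when $\text{rank}\,M(F_r,\mathcal{R}_j)(f)<r-i$, i.e. when $i\le(r-1)-\text{rank}\,M(F_r,\mathcal{R}_j)(f)$; and since $\mathrm{im}\,\partial_2\subseteq\ker\partial_1$ we have $\text{rank}\,M(F_r,\mathcal{R}_j)(f)\le r-1$. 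Therefore the number of indices $i\in\{1,\ldots,r-1\}$ with $f\in V_i(\pi)$ is precisely $(r-1)-\text{rank}\,M(F_r,\mathcal{R}_j)(f)=\dim_\C H_1(X;\C_f)$.

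To conclude I would sum this identity over $\chi\in\widehat A\smallsetminus\widehat 1$ — each of which gives a nontrivial $f$ — and add the $\beta_1(X)$ coming from $\widehat 1$, which yields
\begin{equation*}
\beta_1(Y)=\sum_{i=1}^{r-1}\left|V_i(\pi_1(X))\cap\widehat q\!\left(\widehat A\smallsetminus\widehat 1\right)\right|+\beta_1(X),
\end{equation*}
as asserted. The step I expect to demand the most care is the first one: nailing down the chain-level isomorphism $C_*(Y;\C)\cong\C[A]\otimes_{\C[\pi]}C_*(\widetilde X;\C)$ and its isotypic decomposition, which rests on three facts that must be invoked cleanly — that $H_1$ of the space agrees with $H_1$ of $\pi_1$ (so the presentation complex suffices), that $\C[A]$ is semisimple (the characteristic-zero hypothesis on the coefficients), and that $\widehat q$ is a genuine inclusion so the summands are indexed bijectively by $\widehat A$. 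Everything after that is the linear-algebra bookkeeping translating ranks of the Fox Jacobian into counts of strata, which is routine.
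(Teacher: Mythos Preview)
The paper does not supply its own proof of this theorem: it is quoted verbatim from Hironaka \cite{Hir1} and then applied as a black box to obtain the upper bound in Theorem~3. So there is no in-paper argument to compare against.

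That said, your proposal is correct and is essentially the standard proof one finds in Hironaka's work: decompose $\C[A]$ into one-dimensional characters, use Shapiro to identify $H_1(Y;\C)$ with $\bigoplus_{\chi}H_1(X;\C_{\widehat q(\chi)})$, compute each twisted $H_1$ from the Fox Jacobian via the presentation $2$-complex, and then read off the dimension as the number of strata $V_i$ containing $\widehat q(\chi)$. The only point worth flagging is the identification $H_1(X;\C_f)\cong H_1(\pi_1(X);\C_f)$: this holds for any connected CW complex in degree~$1$ (attach cells of dimension $\geq 3$ to build a $K(\pi,1)$ without altering $H_1$), so your use of the presentation complex is justified without any asphericity assumption on $X$.
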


Since $A$ is a finite abelian group every element of $\widehat{A}$ is determined by a root of unity, therefore $|\widehat{A}|=|A|$. An immediate consequence of this observation and Theorem $3.2$ is the following corollary. 

\begin{corollary}
Let $X_{\Ral}$ be as above, and $X_n$ denote the $n$-fold cyclic cover of the knot complement. Then $p:X_{\Ral} \rightarrow X_n$ is a covering map with deck group $A$, an elementary abelian $p$ group, for $p$ the smallest prime so that $\Apf(t)$ is non-trivial, then
\begin{equation*}
\beta_1(X_{\Ral})=\sum_{x=1}^{r-1}|V_i(\pi_1(X_n)) \cap \widehat{q}(\widehat{A} \smallsetminus \widehat{1})| + \beta_1(X_n) \leq (r-1)(p^d-1)+\beta_1(X_n).
\end{equation*} 
\end{corollary}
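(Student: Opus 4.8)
The plan is to obtain this corollary as a direct application of Hironaka's Theorem (Theorem 3.2 in the excerpt) to the covering map $p:X_{\Ral}\rightarrow X_n$, so most of the work is identifying the input data and then estimating the intersection terms. First I would verify the hypotheses of Hironaka's theorem: by the discussion in \S\ref{sec:LowB}, since $\ker(\Ral)<\ker(\Ga\rightarrow\sfrac{\Z}{n\Z})=\pi_1(X_n)$, the map $p:X_{\Ral}\rightarrow X_n$ is a regular covering of connected manifolds with deck group $A\cong\ker(\Gal\rightarrow\sfrac{\Z}{n\Z})\cong(\sfrac{\Z}{p\Z})^d$, which is finite abelian (indeed an elementary abelian $p$-group), where $p$ is the smallest prime so that $\Apf(t)$ is non-trivial — so $X_n$ plays the role of $X$, $X_{\Ral}$ the role of $Y$, and $\pi_1(X_n)$ has a finite presentation with $r$ generators (the relevant $r$ being the one for which the Alexander stratification $V_i(\pi_1(X_n))$ is defined). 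Plugging into the theorem gives immediately
\begin{equation*}
\beta_1(X_{\Ral})=\sum_{i=1}^{r-1}|V_i(\pi_1(X_n))\cap\widehat{q}(\widehat{A}\smallsetminus\widehat{1})|+\beta_1(X_n).
\end{equation*}

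Next I would bound each summand. Since $\widehat{q}$ is injective, $|\widehat{q}(\widehat{A}\smallsetminus\widehat{1})|=|\widehat{A}|-1=|A|-1=p^d-1$ (using that for a finite abelian group $|\widehat{A}|=|A|$, as remarked in the excerpt). Hence each intersection $V_i(\pi_1(X_n))\cap\widehat{q}(\widehat{A}\smallsetminus\widehat{1})$ has cardinality at most $p^d-1$, and there are $r-1$ indices $i=1,\ldots,r-1$, so the sum is at most $(r-1)(p^d-1)$. Adding $\beta_1(X_n)$ gives the stated inequality. If one wants the sharper $r=c_K-1$ appearing in Theorem 3, one invokes the reduction of the Wirtinger presentation to $c_K-1$ generators noted in the proof of Lemma~\ref{lem:3.2}; but for the corollary as stated it suffices to use whatever $r$ the presentation supplies.

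The one genuine subtlety — and the step I would be most careful about — is making sure the group to which we apply Hironaka's theorem is really $\pi_1(X_n)$ with a presentation whose associated stratification $V_i$ is the one named in the statement, and that the quotient $\pi_1(X_n)\twoheadrightarrow A$ is exactly the restriction of $\Ral$ (equivalently, that the inclusion $\widehat{q}:\widehat{A}\hookrightarrow\widehat{\pi_1(X_n)}$ is the one induced by this quotient). This is precisely the content of the commutative diagram of covers in \S\ref{sec:LowB}: $X_{\Ral}\rightarrow X_n\rightarrow M_K$ with $X_{\Ral}\rightarrow X_n$ regular of deck group $(\sfrac{\Z}{p\Z})^d$. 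Once that identification is in place, everything else is the counting argument above, and no further geometry is needed. I would also remark that the equality half of the corollary (not just the inequality) is automatic from Hironaka's theorem, so the corollary records both.
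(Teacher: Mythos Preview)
Your argument is correct and matches the paper's approach exactly: the corollary is presented there as an immediate consequence of Hironaka's theorem together with the observation that $|\widehat{A}|=|A|=p^d$, which bounds each of the $r-1$ intersection terms by $p^d-1$. One small slip in your closing aside: the value of $r$ feeding into Theorem~3 is $n(c_K-1)$, not $c_K-1$, since the relevant presentation is of $\pi_1(X_n)$ (an index-$n$ subgroup of $\Ga$), not of $\Ga$ itself---but as you note, this is irrelevant to the corollary as stated.
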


This completes the proof of Theorem $3$:
\begin{theorem3}
Let $K$ be a knot with non trivial Alexander polynomial and $\Ral:\Ga \rightarrow \Gal$ is the representation constructed in $\S\ref{sec:deRham}$ for some root $\alpha\in \Fq$ with $\mathrm{ord}_{\Fq^*}(\alpha)=n$, and the positive integer $r$ is the number of generators in a presentation of $\Gamma_n$. Then
\begin{equation*}
 p^d+\beta_1\left(X_n\right)-1 \leq \beta_1(X_{\Ral}) \leq (r-1)(p^d-1)+\beta_1\left(X_{n}\right).
\end{equation*}  
\end{theorem3} 

We have established an upper bound for the betti number of $X_{\Ral}$. When $K$ is fibered we are able to improve this bound, in this case $\Ga'$ is a free group on $2g$ letters, where $g$ is the genus of $K$. Hence 
\begin{equation*}
\Ga=\langle t,x_1,\ldots, x_{2g} \ | \ tx_it^{-1}=w_i \text{ for } i=1, \ldots,2g \rangle,
\end{equation*}
the $w_i$ are words in the $x_i$. This gives us a presentation 
\begin{equation*}
\Gamma_n=\langle t^n,x_1,\ldots,x_{2g} \ | \ t^nx_it^n=u_i\rangle,
\end{equation*}
where $u_i$ is a word in the $x_j$ coming from the rule that $tx_it^{-1}=w_i$. Thus there are $2g$ relations and $2g+1$ variables. To remain consistent with our notation, we express this presentation in the following way 
\begin{equation*}
1 \rightarrow F_{2g} \rightarrow F_{2g+1} \rightarrow K_n \rightarrow 1.
\end{equation*} 
We have that have that the Fox partial $D_0(t^nx_it^nu_i^{-1})$ is $1-x_i$, where the zeroth index is regarded as the index of the generator $t^n$.

\begin{lemma}\label{lem:4.1}
Let $X_n$ be the n-fold cyclic cover of a fibered knot complement, with presentation 
\begin{equation*}
1 \rightarrow F_{2g} \rightarrow F_{2g+1} \rightarrow K_n \rightarrow 1
\end{equation*}
described above. If $p:Y \rightarrow X_n$ is a regular covering space with finite abelian deck group $A$ and $q:\pi_1(X_n) \rightarrow A$ with $t^n \in \mathrm{ker}(q)$, then 
\begin{equation*}
V_{(2g+1)-1}(\pi_1(X_n)) \cap \widehat{q}(\widehat{A} \smallsetminus \widehat{1}) = \emptyset
\end{equation*}
\end{lemma}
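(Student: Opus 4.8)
The plan is to show that the $(2g+1-1)\times(2g+1-1) = 2g \times 2g$ minors of the Fox Jacobian $M(F_{2g+1},\mathcal{R}_j)$ cannot all vanish at any character $f = \widehat{q}(\chi)$ coming from a nontrivial $\chi \in \widehat{A}$, under the hypothesis $t^n \in \ker(q)$. First I would record the structure of the Jacobian: with rows indexed by the $2g$ relators $\mathcal{R}_i: t^n x_i t^{-n} u_i^{-1}$ and columns indexed by the $2g+1$ generators $t^n, x_1,\ldots,x_{2g}$, the column corresponding to $t^n$ has entries $q^*D_0(\mathcal{R}_i) = 1 - x_i$ (as noted just before the lemma, using $D_0(t^n x_i t^n u_i^{-1}) = 1 - x_i$ after applying $q^*$ and using that $t^n$ abelianizes trivially in the relevant quotient), while the remaining $2g$ columns form the "vertical" block. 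The key point is that since $t^n \in \ker(q)$, evaluating at $f = \widehat{q}(\chi)$ sends each generator $x_i$ to $f(x_i) \in \C^*$ and sends $t^n \mapsto 1$, so the $t^n$-column evaluates to the vector $(1 - f(x_i))_{i=1}^{2g}$.

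Next I would argue that rank cannot drop below $2g$ at such an $f$ by exhibiting a nonvanishing $2g\times 2g$ minor. The natural candidate is the maximal minor obtained by deleting the $t^n$-column, i.e. the determinant of the "vertical" block $[q^*D_j(\mathcal{R}_i)(f)]_{1\le i,j\le 2g}$. The standard computation for a fibered presentation (the mapping-torus presentation) is that this vertical block, evaluated at a character, is $\rho(f) - I$ where $\rho(f)$ is the image of the monodromy acting on $H_1$ of the fiber twisted by $f$ — more precisely one recognizes that the fibered Alexander module is presented by $t\cdot(\text{monodromy matrix}) - I$, and here we are looking at the analogous $2g\times 2g$ block. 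I would instead take the cleaner route: the vertical $2g\times 2g$ block has determinant equal (up to a unit) to $\Delta_{\pi_1(X_n)}$-type data, but the decisive observation is that if \emph{all} $2g\times 2g$ minors vanished at $f$, then in particular the vertical block is singular, so its kernel is nonzero; combined with the $t^n$-column this forces a linear dependence, and chasing this through one shows $f$ must satisfy $f(x_i) = 1$ for all $i$ (because the fiber group $\pi_1(\Sigma_g) = \langle x_1,\ldots,x_{2g}\rangle$ lies in the commutator subgroup, so any $f$ factoring through $A$ with $t^n\in\ker q$ must kill all $x_i$ anyway!). That last parenthetical is really the heart: since $\Gamma_n \cong \langle t^n\rangle \ltimes L$ with $L$ generated by conjugates of the $x_i$ and $L \subseteq \Gamma_n'$, any homomorphism $\Gamma_n \to A$ with $t^n$ in the kernel must send all of $L$ — hence all $x_i$ — to $0$, so $f = \widehat{q}(\chi)$ with $\chi \ne \widehat 1$ is impossible in the first place unless... wait, that would make $\widehat q(\widehat A \smallsetminus \widehat 1)$ empty outright, which is too strong. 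So I must be more careful: $t^n \in \ker q$ does not force $q$ trivial because $A$ need not be generated by the image of $L$ alone — but $\pi_1(X_n)$ \emph{is} generated by $t^n$ and $L$, so actually $q(\pi_1(X_n)) = q(L)$, and one needs the hypothesis that $q$ is surjective onto $A$; this is consistent since $L \not\subseteq \Gamma_n'$ in general (it surjects onto $H_1$ of the cyclic cover). I would therefore keep $f$ general with $t^n\mapsto 1$.

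Given that, here is the step I expect to be the main obstacle and how I would handle it: showing the vertical $2g\times 2g$ minor $\det([q^*D_j(\mathcal{R}_i)(f)])$ is nonzero whenever $f \ne \widehat 1$. I would identify this determinant, via the mapping-torus structure $\Gamma_n = \langle t^n\rangle \ltimes L$ and the relations $t^n x_i t^{-n} = u_i$, as (a unit times) the characteristic-polynomial-type expression $\det(\Phi_f - I)$ where $\Phi_f$ is the matrix recording how the $u_i$ depend on the $x_j$ after evaluation at $f$ — essentially the twisted monodromy. Then $\det(\Phi_f - I) = 0$ would say $1$ is an eigenvalue of the twisted monodromy, i.e. $f$ extends to a character of $\Gamma_n$ — but more usefully, one shows this determinant is, up to units and the substitution $t \mapsto 1$, the Alexander polynomial $\Delta_K(1)$ of the fiber data, and $\Delta_K(1) = \pm 1$ by Proposition~\ref{prop:rofp}(1). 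Hence the vertical minor evaluates to a unit in $\C$, so it is nonzero at \emph{every} $f$, in particular at every nontrivial $f$ in the image of $\widehat q$. Therefore $\mathrm{rank}(M(F_{2g+1},\mathcal{R}_j)(f)) \ge 2g = (2g+1) - 1$, which is exactly the statement that $f \notin V_{(2g+1)-1}(\pi_1(X_n))$. Since this holds for all $f \in \widehat q(\widehat A \smallsetminus \widehat 1)$, the intersection is empty, completing the proof. I would double-check the unit claim by verifying that the Fox Jacobian of a mapping-torus presentation, deleting the stable-letter column, gives precisely the presentation matrix $tV - I$ of the Alexander module (so its determinant at $t=1$ is $\det(V - I) = \pm\Delta_K(1) = \pm 1$), which is classical for fibered knots.
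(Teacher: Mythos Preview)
Your proposal has a genuine gap: the claim that the ``vertical'' $2g\times 2g$ minor (obtained by deleting the $t^n$-column) is a unit at every character $f$ with $f(t^n)=1$ is false. Your justification conflates the Fox Jacobian of $\Gamma_K$ with that of $K_n=\pi_1(X_n)$. For $\Gamma_K$ every character kills the $x_i$ (since they lie in $\Gamma_K'$), and the vertical block evaluates to $tI-\phi_*$, whose determinant is $\Delta_K(t)$; but for $K_n$ the $x_i$ need \emph{not} lie in $K_n'$, so characters can be nontrivial on them, and the vertical block's determinant genuinely depends on $f(x_1),\ldots,f(x_{2g})$. In fact your claim cannot hold: if it did, every nontrivial $f\in\widehat q(\widehat A\smallsetminus\widehat 1)$ would have full rank $2g$, hence lie outside every $V_i$, and Hironaka's formula would force $\beta_1(X_{\Ral})=\beta_1(X_n)$, contradicting the lower bound $\beta_1(X_{\Ral})\geq p^d+\beta_1(X_n)-1$ established just before this lemma.

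You also misread the indexing: $V_{(2g+1)-1}=V_{2g}$ is the locus where $\mathrm{rank}<(2g+1)-2g=1$, i.e.\ where \emph{every entry} of the Jacobian vanishes, not where rank $<2g$. This is exactly what the paper exploits, and you nearly had it in your parenthetical before abandoning it. If $f\in V_{2g}$ then in particular the $t^n$-column entries $1-f(x_i)$ all vanish, so $f(x_i)=1$ for every $i$; combined with $f(t^n)=1$ (from $t^n\in\ker q$) this forces $f$ to be trivial on all generators of $K_n$, hence $f=\widehat 1$. But $f=\widehat q(a)$ with $a\neq\widehat 1$ and $q$ surjective, so $f\neq\widehat 1$, a contradiction. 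No determinant computation is needed.
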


\begin{proof}
Assume by way of contradiction that $f \in V_{2g}(\pi_1(X_n)) \cap \widehat{q}(\widehat{A} \smallsetminus \widehat{1})$, hence $f(x)=a(q(x))$ for all $x \in K_n$ and some $a\in \widehat{A}\smallsetminus \widehat{1}$. Furthermore since $f \in V_{2g}(\pi_1(X_n))$, we have $D_0(t^nx_it^nu_i^{-1})(f)=0$ for all $i=1,\dots,2g$. Thus $f(x_i)=1$ and $x_i \in \text{ker}(q)$ for all $i=1,\dots,2g$. However $A$ is a non-trivial quotient of $K_n$, hence with $t^n \in \text{ker}(q)$ at least $1$ generator $x_j$ is not contained in the kernel of $q$. We have reached a contradiction, thus $V_{(2g+1)-1}(\pi_1(X_n)) \cap \widehat{q}(\widehat{A} \smallsetminus \widehat{1}) = \emptyset$.    
\end{proof}
 
\begin{corollary}\label{cor:4.11}
Let be $\alpha$ a root of $\Apf(t)$ of order $n$ and degree $d$ over $\Fp$, for a fibered knot of genus $g$. If $X_{\Ral} \rightarrow X_{n}$ the associated regular cover and $q:K_n \rightarrow \Fq$ then 
\begin{equation*}
\beta_1(X_{\alpha})=\sum_{x=1}^{2g-1}|V_i(\pi_1(X_n)) \cap \widehat{q}(\widehat{A} \smallsetminus \widehat{1})| + \beta_1(X_n) \leq (2g-1)(p^d-1)+\beta_1(X_n).
\end{equation*}
\end{corollary}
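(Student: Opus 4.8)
The plan is to derive Corollary~\ref{cor:4.11} as the fibered-knot refinement of Hironaka's formula (Theorem~3.2) combined with Lemma~\ref{lem:4.1}. The setup is already in place: for a fibered knot of genus $g$ we have the presentation $\Gamma_n = K_n$ with $2g$ relators and $2g+1$ generators, so $r = 2g+1$, and $X_{\Ral} \to X_n$ is a regular cover with deck group $A \cong \Fq \cong (\sfrac{\Z}{p\Z})^d$ by Proposition~\ref{prop:2.3} and the discussion in $\S\ref{sec:LowB}$. Moreover the quotient map $q: K_n \to A$ sends $t^n$ to $1$, since $t^n$ is the meridian of $X_n$ and $\alpha(t^n) = 1$ forces $\varphi(t^n) = 1$ (as used in the proof of Proposition~\ref{prop:2}). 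Thus the hypotheses of Lemma~\ref{lem:4.1} are met.

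First I would invoke Theorem~3.2 with $X = X_n$, $Y = X_{\Ral}$, and $A$ the elementary abelian $p$-group above, writing
\begin{equation*}
\beta_1(X_{\Ral}) = \sum_{i=1}^{(2g+1)-1} |V_i(\pi_1(X_n)) \cap \widehat{q}(\widehat{A} \smallsetminus \widehat{1})| + \beta_1(X_n) = \sum_{i=1}^{2g} |V_i(\pi_1(X_n)) \cap \widehat{q}(\widehat{A} \smallsetminus \widehat{1})| + \beta_1(X_n).
\end{equation*}
Then I would apply Lemma~\ref{lem:4.1}, which says the top stratum term $i = 2g$ contributes nothing, i.e. $V_{2g}(\pi_1(X_n)) \cap \widehat{q}(\widehat{A}\smallsetminus\widehat{1}) = \emptyset$; this collapses the sum to $i$ running from $1$ to $2g-1$, giving the equality asserted in the corollary. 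For the inequality, each intersection $V_i(\pi_1(X_n)) \cap \widehat{q}(\widehat{A}\smallsetminus\widehat{1})$ is a subset of $\widehat{q}(\widehat{A}\smallsetminus\widehat{1})$, which has cardinality $|\widehat{A}| - 1 = |A| - 1 = p^d - 1$ (using that $A$ is finite abelian so $|\widehat{A}| = |A|$, exactly as noted just before Corollary~3.3). Summing the trivial bound $|V_i \cap \widehat{q}(\widehat{A}\smallsetminus\widehat{1})| \leq p^d - 1$ over the $2g-1$ indices yields $\beta_1(X_{\Ral}) \leq (2g-1)(p^d-1) + \beta_1(X_n)$.

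There is essentially no hard step here once Lemma~\ref{lem:4.1} is available — the corollary is a bookkeeping consequence of plugging $r = 2g+1$ into Corollary~3.3 and deleting the top term. The only point requiring a line of care is confirming that the cover $X_{\Ral} \to X_n$ genuinely has deck group equal to $A = \Fq$ acting as the abelian group $(\sfrac{\Z}{p\Z})^d$ and that $t^n \in \ker(q)$, so that Lemma~\ref{lem:4.1} applies verbatim; both facts are immediate from $\S\ref{sec:LowB}$ and the proof of Proposition~\ref{prop:2}. If anything is a genuine obstacle, it is purely notational: the statement of Corollary~\ref{cor:4.11} writes the summation index as $x$ while the summand uses $V_i$, so I would silently read these as the same index $i = 1, \ldots, 2g-1$. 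No new ideas beyond Theorem~3.2 and Lemma~\ref{lem:4.1} are needed.
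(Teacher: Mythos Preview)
Your proposal is correct and follows the same approach as the paper: verify $t^n \in \ker(q)$ so that Lemma~\ref{lem:4.1} applies, then combine with Hironaka's formula to drop the top stratum and bound the remaining $2g-1$ terms by $p^d-1$ each. The paper's own proof is essentially a one-line check of the hypothesis $t^n \in \ker(q)$ (noting $\ker(q) = \ker(\Ral)$ restricted to $\Gamma_n$), leaving the rest implicit; you have spelled out the same argument in more detail.
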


\begin{proof}
All that we need to show is that $t^n \in \text{ker}(q)$, however this follows directly from the fact that $t^n \in \text{ker}(\alpha)$, and $\text{ker}(q)=\text{ker}(\alpha)$.
\end{proof}

This corollary is particularly interesting when we consider the figure $8$ or trefoil knot complements, denoted $4_1$ and $3_1$ in the Rolfsen-Thistleswaithe table. These knots are fibered and have genus $1$, furthermore $\beta_1(X_n)=1$ for all $n > 1$. In this case Theorem $\ref{thm:2.7}$ and Corollary $\ref{cor:4.11}$ say that for any prime $p$ and $\alpha$ a root of $\Apf(t)$ of order $n$ and degree $d$  
\begin{equation}
p^d+\beta_1(X_n)-1 \leq \beta_1(X_{\Ral}) \leq p^d+\beta_1(X_n)-1.
\end{equation} 
Therefore, for the figure $8$, $\beta_1(X_{\Ral})=p^d$, and since $\Ap(t)=t^2-3t+1$, we have that $\beta_1(X_{\Ral})=p$ if $\Ap(t)$ factors modulo $p$ and $\beta_1(X_{\Ral})=p^2$ if $\Ap(t)$ does not factor modulo $p$. 

For the trefoil we have $\Ap(t)=t^2-t+1$, hence 
\begin{equation*}
\beta_1\left( X_{\Ral} \right) =
\begin{cases}
p+2 & \text{ if } 6 | n, \text{ and } \Apf(t) \text{ factors }\\
p^2+2 & \text{ if } 6 | n, \text{ and } \Apf(t) \text{ is irreducible }\\
p & \text{ if } 6 \nmid n, \text{ and } \Apf(t) \text{ factors }\\
p^2 & \text{ if } 6 \nmid n, \text{ and } \Apf(t) \text{ is irreducible }
\end{cases}
\end{equation*}

In particular the bound in Corollary $4.5$ is sharp, and more surprising is that this is not the only case for which it is sharp. It is sharp for $3_1$, $5_1$,  as seen below in $\S 5$, as seen in the tables below.

\section{Computations of First Homology and Questions}

The computations in this section were done using both Sagemath \cite{Sage1} and Magma \cite{Mag1}. The following table below can be interpreted in the following way, for each knot listed there will be $2$ rows, the upper row is $H_1\left(X_{\Ral}\right)$ and the lower row is $H_1\left(X_n\right)$ of the cyclic cover $X_n$ subordinate to $X_{\Ral}$. The  notation used in the table may be understood in the following way;
\begin{equation*}
\left[0^{r_0}, n_1^{r_1},\ldots,n_k^{r_k}\right]\leftrightarrow \Z^{r_0} \oplus \left(\sfrac{\Z}{n_1\Z}\right)^{r_1} \oplus \cdots \oplus \left(\sfrac{\Z}{n_k\Z}\right)^{r_k}.
\end{equation*}
Blank spaces in the table below indicate that magma timed out in the computation of the abelianization of the kernels of $\Ral$, and $\emptyset$ indicates that $\Ap(t)$ is trivial modulo $p$. 
\begin{question}\label{que:q2}
What is $|\mathrm{Torsion}\left(H_1\left(X_{\Ral};\Z\right)\right)|$?
\end{question}

It would be nice to compute the order of the torsion in terms of an invariant of the knot or the covers $X_n$ or $X_{\Ral}$, to mirror the computation of the torsion for the $X_n$. There are many examples below for which
\begin{equation}
|\mathrm{Torsion}\left(H_1\left(X_{\Ral};\Z\right)\right)|=\frac{|\mathrm{Torsion}\left(H_1\left(X_{n};\Z\right)\right)|}{p^d}.
\end{equation}
Recall that $p^d$ is the degree of the cover $X_{\Ral} \rightarrow X_n$.

For example $(3)$ holds for $3_1$ and the primes $2$ and $3$, for the other primes in the table first homology of the cyclic covers of $3_1$ is torsion free. The same behavior is seen in the knot $5_1$, for the prime $5$ this is the case and for the other primes first homology of the cyclic covers is torsion free. For the knot $4_1$ equation $(2)$ holds for each prime presented in the table below. The knot $5_2$ exhibits the same behavior, except for the prime $3$ here the order of the torsion is larger than that of the cyclic cover. In the knot $6_1$ equation $(3)$ holds for all primes in the table, much like $4_1$. The knot $6_2$ sees equation $(3)$ hold for one root of $\Ap(t) \pmod{11}$ but not the other root of the same polynomial, $6_3$ also exhibits this behavior for the primes $13$ and $7$.  

\begin{question}
What feature of a knot makes $(3)$ hold? For which primes does it hold?
\end{question}

The for the knots $3_1$, the $(3,2)$ torus knot, and $5_1$, the $(5,2)$ torus knot, the upper bound for $\beta_1\left(X_{\Ral} \right)$ described in equation $(2)$ is realized for certain values of $p$. Specifically the primes $3,7,$ and $11$ for $5_1$ and $5,7,11$, and $13$ for $3_1$. For $3_1$ there is no torsion in $H_1\left(X_{\Ral};\Z\right)$ for primes such that $\mathrm{ord}_{\Fq^*}(\alpha)$ is $6=2\cdot 3$. Similarly for $5_1$ and primes such that $\mathrm{ord}_{\Fq^*}(\alpha)$ is $10=5\cdot 2$. This phenomomenon also holds for one computable case of the $(7,2)$ torus knot, not appearing in the table below. What is even more interesting is that all the roots of the Alexander polynomial of the torus knot $T(p,q)$ are $pq$ roots of unity.
\begin{question}
If $T(p,q)$ is a torus knot, is the upper bound in $(2)$ for $\beta_1\left(X_{\Ral}\right)$ realized for certain primes $p$, and is $H_1\left(X_{\Ral};\Z\right)$ torsion free for these primes? 
\end{question}

In many cases below the lower bound for $\beta_1\left(X_{\Ral}\right)$ is realized however for the knot $6_3$ and the prime $2$ it is not, and neither is the upper bound. This phenomenon is also demonstrated in the the knot $6_2$ and the primes $2$ and $3$.  

\begin{question}
For what knots is the upper bound in Theorem $3$ or equation $(2)$ realized? similarly what knots is the lower bound of Theorem $3$ realized?
\end{question}

Another important feature of the table below is the light blue colored cells are indicating which metabelian covers $X_{\Ral}$ produce the minimal degree non-cylic cover as the quotient $Y_{\Ral}$. It can be a seen that a for a few examples $Y_{\Ral}$ is the minimal degree non-cyclic cover for the knot in Question.

\begin{question}
For the knots $5_2$, $6_2$ and $6_3$ what is the minimal degree non-cyclic cover? Is the cover somehow related to $Y_{\Ral}$.
\end{question}

\begin{question}
What feature must a knot have to make $Y_{\Ral}$ the minimal degree non-cyclic cover?
\end{question}

\newgeometry{left=.8cm}
\begin{table}[]

\footnotesize
\begin{tabular}{|l|l|l|l|l|l|l|}
\hline
                  & $2$ & $3$ & $5$ & $7$ & $11$ & $13$  \\ \hline
\multirow{2}{*}{$3_1$} & $\left[ 0^4 \right]$ & \cellcolor{blue!25}$\left[0^3 \right]$ & $\left[ 0^{27} \right]$ & $\left[ 0^{9} \right]$ & $\left[ 0^{123} \right]$ & $\left[ 0^{15} \right]$ \\ \cline{2-7} 
                  & $\left[0,2^2 \right]$ & $\left[0, 3\right]$ & $\left[0^3 \right]$ & $\left[ 0^{3} \right]$ & $\left[ 0^{3} \right]$ & $\left[ 0^{3} \right]$  \\ \hline
\multirow{2}{*}{$4_1$} & \cellcolor{blue!25}$\left[ 0^{4},2^2 \right]$ & $\left[ 0^{9}, 5 \right]$ & $\left[ 0^{5} \right]$ & $\left[ 0^{49}, 3^2, 5 \right]$ & $\left[ 0^{11}, 11 \right]$ & $\left[ 0^{169}, 5, 29^2 \right]$  \\ \cline{2-7} 
                  & $\left[ 0,4^2 \right]$ & $\left[ 0,3^2,5 \right]$ & $\left[ 0, 5 \right]$ & $\left[ 0, 3^2,5,7^2 \right]$ & $\left[ 0,11^2 \right]$ & $\left[ 0,5,13^2,29^2 \right]$  \\ \hline
\multirow{2}{*}{$5_1$} & $\left[ 0^{26} \right]$ & $\left[ 0^{245} \right]$ & \cellcolor{blue!25} $\left[ 0^5 \right]$ &  $\left[ 0^{7205} \right]$& $\left[ 0^{35} \right]$ &   \\ \cline{2-7} 
                  & $\left[ 0, 2^4 \right]$ & $\left[ 0^5 \right]$ &  $\left[ 0,5 \right]$ &  $\left[ 0^5 \right]$ & $\left[ 0^5 \right]$ &   \\ \hline
\multirow{2}{*}{$5_2$} & $\emptyset$ &$\left[ 0^9,2^4,7 \right]$  & $\left[ 0^{25} \right]$ & $\left[ 0^7 \right]$ & $\left[ 0^{11},11 \right]$ & $\left[ 0^{169} \right]$  \\ \cline{2-7} 
                  & $\emptyset$ & $\left[ 0,3^3 \right]$ & $\left[ 0, 5^2 \right]$ & $\left[ 0, 7 \right]$ & $\left[ 0,11,11 \right]$ & $\left[ 0,13^2 \right]$   \\ \hline
\multirow{2}{*}{$6_1$} & $\emptyset$ &\cellcolor{blue!25}$[0^3,3]$  & $[0^5,9,5]$ & $[0^7,7]$ & $[0^{11},9,11,31^2]$  & $[0^{13},3,27,5^2,7^2,13]$  \\ \cline{2-7} 
                  & $\emptyset$  & $[0,9]$ & $[0,9,5^2]$ & $[0,7^2]$ & $[0,9,11^2,31^2]$ & $[0,3,27,5^2,7^2,13^2]$   \\ \hline
\multirow{2}{*}{$6_2$} & $[0^{21},4]$ & $[0^{17}, 2^{8},3^2,11]$ & $[0^{25},5^3]$ &  & $[0^{11}]$; $[0^{121},5^2,11^{18},121^2,23^{24},43^{12}]$ &  \\ \cline{2-7} 
                  & $[0,2^4]$ & $[0,3^4,11]$ & $[0,5^2]$ &  & $[0,11]$; $[0,5^2,11^3]$ &   \\ \hline
\multirow{2}{*}{$6_3$} & $[0^{31},2^5,4]$ & $[0^9,3^4,13]$  &  & $[0^7,7]$; $[0^{49},3^2,13^{17}]$ &  & $[0^{13}]$; $[0^{169},5^{28},13^{38},169^4,43^2,181^{28}]$   \\ \cline{2-7} 
                  & $[0,4^4]$ & $[0,3^2,13]$  &  & $[0,7^2]$; $[0,3^2,7^2,13]$ &  &  $\left[0,13 \right]$; $\left[0,13^3,43^2\right]$  \\ \hline

\end{tabular}
\normalsize

\end{table}
\restoregeometry

\section*{Minimal Degree non-cyclic covers vs. $Y_{\Ral}$}

In what follows is a list of knots up to $7$ crossings, the number next to each knot is the minimal degree of a non-cyclic cover. The yes or no, indicates if $Y_{\Ral}$ is the minimal degree non-cyclic cover, and $\Ap(t) \pmod{p}$ is written next to this which is used to determine whether $Y_{\Ral}$ is minimal. 

\begin{itemize}
\item $3_1, 3$: Yes, $((t + 1)^2, 3)$
\item $4_1, 4$: Yes, $(t^2 + t + 1, 2)$
\item $5_1, 5$: Yes, $((t + 1)^4, 5)$
\item $5_2, 5$: No, $((2) * (t^2 + t + 1), 5)$
\item $6_1, 3$: Yes, $((-1) * (t + 1)^2, 3)$
\item $6_2, 5$: No, $((t^2 + t + 1)^2, 5)$
\item $6_3, 5$: No, $(t^4 + 2*t^3 + 2*t + 1, 5)$
\item $7_1, 7$: Yes, $((t + 1)^6, 7)$
\item $7_2, 4$: Yes, $(t^2 + t + 1, 2)$
\item $7_3, 4$: Yes, $(t * (t^2 + t + 1), 2)$
\item $7_4, 3$: Yes, $((t + 1)^2, 3)$
\item $7_6, 6$: No, $(t^2, 2), ((-1) * (t^4 + t^3 + t^2 + t + 1), 3)$
\item $7_7, 3$: Yes, $((t + 1)^4, 3)$
\end{itemize}
\bibliography{Research1}
\bibliographystyle{plain}

\end{document}